\theoremstyle{plain}
\newtheorem{thm}{Theorem}[subsection]
\newtheorem{cor}[thm]{Corollary}
\newtheorem{lem}[thm]{Lemma}
\newtheorem{prop}[thm]{Proposition}
\theoremstyle{definition}
\newtheorem{defn}[thm]{Definition}
\newtheorem{question}[thm]{Question}
\newtheorem{rem}[thm]{Remark}
\newtheorem{thm0}{Theorem}
\newtheorem{question0}[thm0]{Question}
\def\al{\alpha}
\def\bt{\beta}
\def\dt{\delta}
\def\sg{\sigma}
\def\tt{\theta}
\def\lmd{\lambda}
\def\vps{\varepsilon}
\def\om{\omega}
\def\bgm{\Gamma}
\def\bdt{\Delta}
\def\mc{\mathcal}
\def\t{\text}
\def\it{\textit}
\def\ot{\otimes}
\def\op{\oplus}
\def\se{\leqslant}
\def\le{\geqslant}
\def\bxt{\boxtimes}
\def\sq{\square}
\def\ra{\rightarrow}
\def\lra{\longrightarrow}
\def\la{\leftarrow}
\def\xra{\xrightarrow}
\def\Hom{\operatorname {Hom}}
\def\Ext{\operatorname {Ext}}
\def\Tor{\operatorname {Tor}}
\def\Ker{\operatorname {Ker}}
\def\dim{\operatorname {dim}}
\def\id{\operatorname {id}}
\def\Mod{\operatorname {Mod}}
\def\injdim{\operatorname{injdim}}
\def\projdim{\operatorname{projdim}}
\def\gldim{\operatorname{gldim}}
\def\rgldim{\operatorname{rgldim}}
\def\lgldim{\operatorname{lgldim}}
\def\Hdim{\operatorname{Hdim}}
\def\GL{\operatorname {GL}}
\def\SL{\operatorname {SL}}
\def\tr{\operatorname {tr}}
\def\ob{\operatorname {ob}}
\def\kk{\mathbbm{k}}
\def\NN{\mathbb{N}}
\begin{document}
\title[Calabi-Yau property]{\bf  Calabi-Yau property under monoidal Morita-Takeuchi equivalence}

\author{Xingting Wang}
\address{Xingting WANG\newline Department of Mathematics, Temple University, Philadelphia, PA 19122, USA }
\email{xingting@temple.edu}

\author{Xiaolan YU}
\address {Xiaolan YU\newline Department of Mathematics, Hangzhou Normal University, Hangzhou, Zhejiang 310036, China}

\email{xlyu@hznu.edu.cn}

\author{Yinhuo ZHANG}
\address {Yinhuo ZHANG\newline Department WNI, University of Hasselt, Universitaire Campus, 3590 Diepeenbeek,Belgium } \email{yinhuo.zhang@uhasselt.be}

\date{}

\begin{abstract}

Let $H$ and $L$ be two Hopf algebras such that their comodule categories are monoidal equivalent. We prove that if  $H$ is a twisted Calabi-Yau (CY) Hopf algebra, then  $L$ is a twisted CY algebra when it is  homologically smooth. Especially, if $H$ is a Noetherian twisted CY Hopf algebra  and $L$ has finite global dimension, then $L$ is a twisted CY algebra.
\end{abstract}

\keywords{Morita-Takeuchi equivalence; Calabi-Yau algebra; cogroupoid}
\subjclass[2000]{16E65, 16W30, 16W35.}

\maketitle

\section*{Introduction}\label{0}
In noncommutative projective algebraic geometry, the notion of Artin-Schelter (AS) regular algebra $A=\bigoplus_{i\ge 0}A_i$ was introduced in \cite{ars} as a homological analogue of a polynomial algebra. The connected graded noncommutative algebra $A$ is considered as the homogenous coordinate ring of some noncommutative projective space $\mathbb P^n$.

In the lecture note \cite{man}, Manin constructed the quantum general linear group $\mathcal O_A(\GL)$ that universally coacts on an AS regular algebra $A$. Similarly, we can define the quantum special linear group of $A$, denoted by $\mathcal O_A(\SL)$, by requiring the homological codeterminant of the Hopf coaction to be trivial; see \cite[Section 2.1]{ww} for details. As pointed out in \cite{ww}, it is conjectured that these universal quantum groups should possess the same homological properties of $A$, among which the Calabi-Yau (CY) property is the most interesting one since $A$ is always twisted CY according to \cite[Lemma 2.1]{rrz} (see Section 1.2 for the definition of twisted CY algebra). Moreover, many classical quantized coordinate rings can be realized as universal quantum groups associated to AS regular algebras via the above construction \cite{cww, ww}, whose CY property and rigid dualizing complexes have been discussed in \cite{bz,gz}.

Now let us look at a nontrivial example, which is the motivation of our paper. Let $\kk$ be a field. AS regular algebras of global dimension 2 (not necessarily Noetherian) were classified by Zhang in \cite{zh}. They are the algebras (assume they are generated in degree one)
$$A(E)=\kk\langle x_1,x_2,\dots,x_n\rangle/(\sum_{1\se i,j\se n}e_{ij}x_ix_j)$$ for $E=(e_{ij})\in \GL_n(\kk)$ with $n\le2$. It is shown in \cite[Corollary 2.17]{ww} that $\mc{O}_{A(E)}(\SL)\cong\mc{B}(E^{-1})$
as Hopf algebras, where $\mathcal B(E^{-1})$ was defined by Dubois-Violette and Launer \cite{dvl} as the quantum automorphism group of the non-degenerate bilinear form associated to $E^{-1}$. In particular, when
$$
E=
\begin{pmatrix}
0  &  -q\\
1  &  0\\
\end{pmatrix}\ \text{and}\
E^{-1}=E_q=
\begin{pmatrix}
0  &  1\\
-q^{-1}  &  0\\
\end{pmatrix},
\ \text{for some}\ q\in \kk^\times,
$$
we have $A(E)=A_q=k\langle x_1,x_2\rangle/(x_2x_1+qx_1x_2)$ is the quantum plane and $\mathcal{O}_{A_q}(\SL)=\mathcal{B}(E_q)=\mathcal{O}_q(\SL_2)$ is the quantized coordinate ring of $\SL_2(\kk)$.

Two Hopf algebras are called monoidally Morita-Takeuchi equivalent, if their  comodule categories are monoidally equivalent. Bichon obtained that $\mc{B}(E)$ (for any $E\in \GL_n(\kk)$ with $n\ge 2$) and $\mc{O}_q(\SL_2)$ are monoidally Morita-Takeuchi equivalent when  $q^2+\text{tr}(E^tE^{-1})q+1=0$ \cite[Theorem 1.1]{bi3}. By applying this monoidal equivalence, Bichon obtained a free Yetter-Drinfeld module resolution (Definition \ref{defn ydres}) of the trivial Yetter-Drinfeld module $\kk$ over $\mathcal B(E)$ \cite{bi}.
This turns out to be the key ingredient to prove the CY property of $\mathcal B(E)$ \cite{bi,ww}. Note that the quantized coordinate ring $\mathcal O_q(\SL_2)$ is well-known to be twisted CY. Thus it is natural to ask the following question.
\begin{question0}\label{Q1}
Let $H$ and $L$ be two Hopf algebras that are monoidally Morita-Takeuchi equivalent. Suppose $H$ is twisted CY. Is $L$ always twisted CY?
\end{question0}
The monoidal equivalence between the comodule categories of various universal quantum groups have been widely observed in \cite{bi3, bi1,cww, mro} by using the language of cogroupoids. In recent papers \cite{rv1, rv2}, Raedschelders and Van den Bergh proved that, for a Koszul AS regular algebra $A$, the monoidal structure of the comodule category of $\mathcal O_A(\GL)$ only depends on the global dimension of $A$ and not on $A$ itself \cite[Theorem 1.2.6]{rv1}. We expect a positive answer to Question \ref{Q1}, which will play an important role in investigating the CY property of these universal quantum groups associated to AS regular algebras.

The following is our first result, showing that in order to answer Question \ref{Q1}, it suffices to prove that the homologically smooth condition is monoidally Morita-Takeuchi invariant.

\begin{thm0}\label{thm1}(Theorem \ref{thm main'})
Let $H$ and $L$ be two  monoidally Morita-Takeuchi equivalent Hopf algebras. If  $H$ is twisted CY of dimension $d$ and $L$ is homologically smooth, then $L$ is twisted CY of dimension $d$ as well.
\end{thm0}
Note that for Hopf algebras, there are several equivalent descriptions of the  homological smoothness stated in Proposition \ref{prop hom}. Now Question 1 is reduced to the following question.
\begin{question0}\label{Q2}
Let $H$ and $L$ be two monoidally Morita-Takeuchi equivalent Hopf algebras.  Suppose $H$ is homologically smooth. Is $L$ always homologically smooth?
\end{question0}
Though we can not fully answer Question \ref{Q2}, it is true in certain circumstances. The following is the main result in this paper.

\begin{thm0}\label{thm2}(Theorem \ref{thm main1})
Let $H$ be a twisted CY Hopf algebra of dimension $d$, and $L$ a Hopf algebra monoidally Morita-Takeuchi equivalent to $H$.  If one of the following conditions holds,  then $L$ is also twisted CY of dimension $d$.
\begin{enumerate}
\item[(i)] $H$ admits a finitely generated relative projective Yetter-Drinfeld  module resolution for the trivial Yetter-Drinfeld module $\kk$ and $L$ has finite global dimension.
\item[(ii)] $H$ admits a bounded finitely generated relative projective Yetter-Drinfeld  module resolution for the trivial Yetter-Drinfeld module $\kk$.
\item[(iii)] $H$ is Noetherian and $L$ has finite global dimension.
\item[(iv)] $L$ is Noetherian and has finite global dimension.
\end{enumerate}
\end{thm0}

Relative projective Yetter-Drinfeld modules and relative projective Yetter-Drinfeld  module  resolutions will be explained in Section 2.2. The trivial module $\kk$ over $\mathcal O_q(\SL_2)$ admits a finitely generated free Yetter-Drinfeld  resolution of length 3 \cite[Theorem 5.1]{bi}.  Every free Yetter-Drinfeld  module resolution is a relative projective Yetter-Drinfeld module resolution.  According to our result above, this immediately implies that $\mathcal B(E)$ is twisted CY since $\mathcal B(E)$ and $\mathcal O_q(\SL_2)$ are monoidally Morita-Takeuchi equivalent as mentioned above.

Twisted CY algebras, of course, have finite global dimensions. Theorem \ref{thm2} leads to the last question concerning about whether the global dimension is monoidally Morita-Takeuchi invariant. The similar question was asked by Bichon in \cite{bi2} concerning the Hochschild dimension, and the two questions are essentially the same by Proposition \ref{prop dim}.
\begin{question0}\label{Q3}
Let $H$ and $L$ be  two monoidally Morita-Takeuchi equivalent Hopf algebras. Does $\gldim(H)=\gldim(L)$, or at least, $\gldim(H)<\infty$ if and only if $\gldim(L)<\infty$?
\end{question0}
If the answer is positive, then the finite global dimension assumptions in Theorem \ref{thm2} (i), (iii), and (iv)  can be dropped. This will partially answer  Question \ref{Q1} under the assumption that one of the Hopf algebras is Noetherian. As a consequence of our main result, we provide a partial answer under the assumption that both Hopf algebras are twisted CY.

\begin{thm0}\label{thm3}(Corollary \ref{main cor})
Let $H$ and $L$ be  two monoidally Morita-Takeuchi equivalent Hopf algebras. If both $H$ and $L$ are twisted CY, then $\gldim(H)=\gldim(L)$.
\end{thm0}

Monoidal Morita-Takeuchi equivalence can be described by the language of cogroupoids. If $H$ and $L$ are two Hopf algebras such that they are monoidally Morita-Takeuchi equivalent,  then there exists a connected cogroupoid with 2 objects $X,Y$ such that $H=\mc{C}(X,X)$ and $L=\mc{C}(Y,Y)$. In this case, $\mc{C}(X,Y)$ is just the $H$-$L$-biGalois object (see Section 1.1 for details).  Throughout the paper, we will use the language of cogroupoids to discuss Hopf algebras whose comodule categories are monoidally equivalent. We generalize many definitions and results in \cite{bz} to the level of cogroupoids (see Section 2.5). Especially for Hopf-Galois objects, we define the left (resp. right) winding automorphisms of $\mc{C}(X,Y)$ using the homological integrals of $\mc{C}(X,X)$ (resp. $\mc{C}(Y,Y)$). We also generalize the famous Radford $S^4$ formula for finite-dimensional Hopf algebras  to Hopf-Galois object $\mc{C}(X,Y)$ by assuming both $\mc{C}(X,X)$ and $\mc{C}(Y,Y)$ are AS-Gorenstein Hopf algebras (see Theorem \ref{thm inn} and the Remark below). At last, we provide two examples in Section 3. One is the connected cogroupoid associated to $\mc{B}(E)$ and the other is the connected cogroupoid associated to a generic datum of finite Cartan type $(\mc{D},\lambda)$.

\section{Preliminaries}\label{1}

We work over a fixed field $\kk$. Unless stated otherwise all algebras and vector spaces are over $\kk$. The unadorned  tensor $\ot$ means $\ot_\kk$ and $\Hom$ means $\Hom_\kk$.

Given an algebra $A$, we write $A^{op}$ for the
opposite algebra of $A$ and $A^e$ for the enveloping algebra $A\ot
A^{op}$. The category of left (resp. right) $A$-modules is denoted by $\Mod$-$A$ (resp. $\Mod$-$A^{op}$). An $A$-bimodule can be identified with an $A^e$-module, that is, an object in $\Mod$-$A^e$.

For an $A$-bimodule $M$ and two algebra automorphisms $\mu$ and $\nu$, we let $^\mu M^\nu$ denote the $A$-bimodule such that $^\mu M^\nu\cong M$ as vector spaces, and the bimodule structure is given by
$$a\cdot m \cdot b=\mu(a)m\nu(b),$$
for all $a,b\in A$ and $m\in M$. If one of the automorphisms is the identity, we will omit it. It is well-known that   $A^\mu\cong A$ as $A$-bimodules if and only
if $\mu$ is an inner automorphism of $A$.

For a Hopf algebra $H$, as usual, we use the symbols $\bdt$, $\vps$ and $S$ respectively for its comultiplication, counit, and antipode. We use  Sweedler's (sumless) notation for the
comultiplication and coaction of $H$.
The category of right
$H$-comodules is denoted by $\mc{M}^H$. We write ${}_\vps\kk$ (resp. $\kk_\vps$) for the left (resp. right) trivial module defined by the counit $\vps$ of $H$.

\subsection{Cogroupoid}
We first recall the definition of a cogroupoid.
\begin{defn}A \it{cocategory} $\mc{C}$ consists of:
\begin{itemize}
\item A set of objects $\ob(\mc{C})$.
\item For any $X,Y\in \ob(\mc{C})$, an algebra $\mc{C}(X,Y)$.
\item For any $X,Y,Z\in \ob(\mc{C})$, algebra homomorphisms
$$\bdt^Z_{XY}:\mc{C}(X,Y)\ra \mc{C}(X,Z)\ot \mc{C}(Z,Y) \t{ and } \vps_X:\mc{C}(X,X)\ra \kk$$
such that for any $X,Y,Z,T\in \ob(\mc{C})$, the following diagrams commute:
$$\begin{CD}\mc{C}(X,Y)@>\bdt^Z_{X,Y}>>\mc{C}(X,Z)\ot \mc{C}(Z,Y)\\
@V\bdt^T_{X,Y}VV@V\bdt^T_{X,Z}\ot 1VV\\
\mc{C}(X,T)\ot \mc{C}(T,Y)@>1\ot \bdt^Z_{T,Y}>>\mc{C}(X,T)\ot \mc{C}(T,Z)\ot \mc{C}(Z,Y)\end{CD}$$
\xymatrix
{\mc{C}(X,Y)\ar@{=}[rd]\ar^{\bdt^Y_{X,Y}}[d]&\\
\mc{C}(X,Y)\ot\mc{C}(Y,Y)\ar^-{1\ot \vps_Y}[r]&\mc{C}(X,Y)}\hspace{2mm}
\xymatrix
{\mc{C}(X,Y)\ar@{=}[rd]\ar^{\bdt^X_{X,Y}}[d]&\\
\mc{C}(X,X)\ot\mc{C}(X,Y)\ar^-{\vps_X \ot 1 }[r]&\mc{C}(X,Y).}
\end{itemize}
\end{defn}
Thus a cocategory with one object is just a bialgebra.

A cocategory $\mc{C}$ is said to be \it{connected} if $\mc{C}(X,Y)$ is a nonzero algebra for any $X,Y\in \ob(\mc{C})$.

\begin{defn}\label{defn cogoupoid}A \it{cogroupoid} $\mc{C}$ consists of a cocategory $\mc{C}$ together with, for any $X,Y\in \ob(\mc{C})$, linear maps
$$S_{X,Y}:\mc{C}(X,Y)\longrightarrow \mc{C}(Y,X)$$
such that for any $X,Y\in \mc{C}$, the following diagrams commute:
$$\xymatrix{\mc{C}(X,X)\ar[d]_{\bdt_{X,X}^Y}\ar[r]^-{\vps_X}&\kk\ar[r]^-u&\mc{C}(X,Y)\\
\mc{C}(X,Y)\ot\mc{C}(Y,X)\ar[rr]^{1\ot S_{Y,X}}&&\mc{C}(X,Y)\ot\mc{C}(X,Y)\ar[u]^m}$$
$$\xymatrix{\mc{C}(X,X)\ar[d]_{\bdt_{X,X}^Y}\ar[r]^-{\vps_X}&\kk\ar[r]^-u&\mc{C}(Y,X)\\
\mc{C}(X,Y)\ot\mc{C}(Y,X)\ar[rr]^{S_{X,Y}\ot 1}&&\mc{C}(Y,X)\ot\mc{C}(Y,X)\ar[u]^m}$$
\end{defn}
From the definition, we can see that $\mc{C}(X,X)$ is a Hopf algebra for each object $X\in \mc{C}$.

We use Sweedler's notation for cogroupoids. Let $\mc{C}$ be a cogroupoid. For any $a^{X,Y}\in \mc{C}(X,Y)$, we write
$$\bdt^Z_{X,Y}(a^{X,Y})=a^{X,Z}_1\ot a^{Z,Y}_2.$$
Now the cogroupoid axioms are
$$(\bdt_{X,Z}^T\ot 1)\circ \bdt^Z_{X,Y}(a^{X,Y})=a^{X,T}_1\ot a^{T,Z}_2\ot a^{Z,Y}_3=(1\ot \bdt_{T,Y}^Z)\circ \bdt^T_{X,Y}(a^{X,Y});$$
$$\vps_X(a_1^{X,X})a_2^{X,Y}=a^{X,Y}=a_1^{X,Y}\vps_Y(a_2^{Y,Y});$$
$$S_{X,Y}(a_1^{X,Y})a_2^{Y,X}=\vps_X(a_1^{X,X})1=a_1^{X,Y}S_{Y,X}(a_2^{Y,X}).$$

The following is Proposition 2.13 in \cite{bi1}. It describes properties of the ``antipodes''.
\begin{lem}
Let $\mc{C}$ be a cogroupoid and let $X,Y\in\ob(\mc{C})$.
\begin{enumerate}
\item[(i)] $S_{Y,X}:\mc{C}(Y,X)\ra \mc{C}(X,Y)^{op}$ is an algebra homomorphism.
\item[(ii)] For any $Z\in \ob(\mc{C})$ and $a^{Y,X}\in\mc{C}(Y,X)$,
$$\bdt_{X,Y}^Z(S_{Y,X}(a^{Y,X}))=S_{Z,X}(a_2^{Z,X})\ot S_{Y,Z}(a_1^{Y,Z}).$$

\end{enumerate}
\end{lem}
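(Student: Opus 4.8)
The plan is to view (i) and (ii) as the cogroupoid incarnations of the classical facts that the antipode $S$ of a Hopf algebra is an algebra anti-endomorphism and a coalgebra anti-endomorphism, and to prove them by the usual convolution-style arguments rewritten in the Sweedler calculus for cogroupoids recalled above, being careful about which intermediate object is inserted at each ``comultiplication''.

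For (i), I would first record that $S_{Y,X}$ preserves the unit: since each $\bdt^Z_{X,Y}$ is an algebra map, $\bdt^Y_{X,X}(1)=1\ot 1$, so evaluating the antipode axiom $a_1^{X,Y}S_{Y,X}(a_2^{Y,X})=\vps_X(a)1_{\mc{C}(X,Y)}$ at $a=1\in\mc{C}(X,X)$ yields $S_{Y,X}(1)=1$. For anti-multiplicativity, fix $a,b\in\mc{C}(Y,X)$; then $S_{Y,X}(ab)$ and $S_{Y,X}(b)S_{Y,X}(a)$ both lie in $\mc{C}(X,Y)$, and I would compute $S_{Y,X}(b)S_{Y,X}(a)$ by the unfolded convolution trick: insert on the right a factor equal to $1_{\mc{C}(X,Y)}$ but written, via the antipode axiom applied after projecting $ab$ to the relevant diagonal Hopf algebra through $\bdt^Y_{Y,X}$ (whose left leg lands in $\mc{C}(Y,Y)$) or through $\bdt^X_{Y,X}$ (whose right leg lands in $\mc{C}(X,X)$), as a sum of ``antipode-of-a-leg times the complementary leg''; then collapse the interior one variable at a time using $S_{Y,X}(c_1)c_2=\vps(c)1$ together with the counit axiom. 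After passing to the appropriate three-fold comultiplications of $a$ and of $b$, every intermediate term lies in the single algebra $\mc{C}(X,Y)$, so the manipulations become literally the classical ones.

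For (ii), I would argue dually in the algebra $\mc{C}(X,Z)\ot\mc{C}(Z,Y)$, imitating the classical proof that $\bdt\circ S$ is a right, and $\tau\circ(S\ot S)\circ\bdt$ a left, convolution inverse of $\bdt$. Concretely, I would apply the algebra homomorphism $\bdt^Z_{X,Y}$ to both antipode identities $a_1S(a_2)=\vps(a)1$ and $S(a_1)a_2=\vps(a)1$, with $a$ comultiplied through intermediate objects chosen so that all occurrences of $S_{Z,X}$, $S_{Y,Z}$, $S_{Y,X}$ and of the two multiplications typecheck, obtaining two identities in $\mc{C}(X,Z)\ot\mc{C}(Z,Y)$; combining them by associativity and the counit axiom then produces $\bdt^Z_{X,Y}(S_{Y,X}(a))=S_{Z,X}(a_2^{Z,X})\ot S_{Y,Z}(a_1^{Y,Z})$. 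Coassociativity of the maps $\bdt^Z_{\bullet,\bullet}$ and their being algebra homomorphisms are used throughout to regroup tensor factors.

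The only real obstacle is the type discipline: a cogroupoid has no ambient algebra, and the off-diagonal hom-spaces $\mc{C}(X,Y)$ carry no coalgebra structure of their own, so at each step one must pick the intermediate object so that (a) the antipode axioms, which live only on the diagonal Hopf algebras $\mc{C}(W,W)$, become applicable, and (b) the factors one wants to multiply all lie in one and the same hom-algebra. Once the object labels are threaded consistently, the computation is exactly the classical one and I expect no further difficulty.
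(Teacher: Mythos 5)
Your sketch is correct: both parts do follow by transporting the classical convolution-style computations (antipode is an algebra and coalgebra anti-morphism) to the cogroupoid setting, with the only work being the bookkeeping of intermediate objects so that the antipode axioms, which live on the diagonal Hopf algebras $\mc{C}(W,W)$, and the multiplications all typecheck — e.g.\ for (i) one writes $S_{Y,X}(b)S_{Y,X}(a)=S_{Y,X}(b_1^{Y,X})S_{Y,X}(a_1^{Y,X})\,a_2^{X,Y}b_2^{X,Y}\,S_{Y,X}(a_3^{Y,X}b_3^{Y,X})$ using $\vps_X$ and the antipode axiom at $X$, then collapses with the antipode axiom at $Y$. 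Note that the paper itself gives no proof of this lemma: it is quoted from Bichon's Proposition 2.13 in [bi1], and your argument is precisely the standard one used there. One minor imprecision in (ii): the second identity you need, namely that $\bdt^Z_{X,Y}(a_2^{X,Y})\cdot\bigl(S_{Z,X}(a_5^{Z,X})\ot S_{Y,Z}(a_4^{Y,Z})\bigr)$ collapses to a counit term, is not literally $\bdt^Z_{X,Y}$ applied to an antipode identity; it comes from applying the antipode axioms at the objects $Z$ and $X$ separately in the two tensor legs — but this is exactly the type-threading you anticipate, and it goes through.
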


For other basic properties of cogroupoids, we refer to \cite{bi1}.

In \cite{bi1}, Bichon reformulated Schauenburg's results in \cite{sch} by cogroupoids. This theorem shows that to discuss two Hopf algebras with monoidally equivalent comodule categories is equivalent to discuss connected cogroupoids. In what follows, without otherwise stated, we assume that the cogroupoids mentioned are \textit{connected}.
\begin{thm}\cite[Theorem 2.10, 2.12]{bi1}\label{thm ga}
Let $\mc{C}$ be a connected cogroupoid. Then for any $X,Y\in\mc{C}$, we have equivalences of monoidal categories that are inverse of each other
$$\begin{array}{rclcrcl}\mc{M}^{\mc{C}(X,X)}&\cong^\ot& \mc{M}^{\mc{C}(Y,Y)}&&\mc{M}^{\mc{C}(Y,Y)}&\cong^\ot& \mc{M}^{\mc{C}(X,X)}\\
V&\longmapsto&V\sq_{\mc{C}(X,X)}\mc{C}(X,Y)&&V&\longmapsto&V\sq_{\mc{C}(Y,Y)}\mc{C}(Y,X)
\end{array}$$
Conversely, if $H$ and $L$ are Hopf algebras such that $\mc{M}^H\cong ^\ot \mc{M}^L$, then there exists a connected cogroupoid with 2 objects $X,Y$ such that $H=\mc{C}(X,X)$ and $L=\mc{C}(Y,Y)$.
\end{thm}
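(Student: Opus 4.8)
The plan is to realize $\mc{C}(X,Y)$ as a biGalois object and then read off everything from Schauenburg's structure theory of biGalois objects \cite{sch}; the two functors, their monoidality, and the fact that they are mutually inverse then follow formally, and the converse is Schauenburg's reconstruction. Throughout put $H=\mc{C}(X,X)$, $L=\mc{C}(Y,Y)$, $A=\mc{C}(X,Y)$ and $B=\mc{C}(Y,X)$. First I would show that $A$ is an $H$-$L$-bicomodule algebra: the algebra homomorphism $\bdt^Y_{X,Y}\colon A\ra A\ot L$ is a right $L$-coaction and $\bdt^X_{X,Y}\colon A\ra H\ot A$ is a left $H$-coaction, the coaction axioms being coassociativity of the cocategory, counitality being the two counit triangles, and commutativity of the two coactions being the coassociativity pentagon in the case $(Z,T)=(Y,X)$; since each $\bdt$ is an algebra map, $A$ is a bicomodule algebra, and it is nonzero by connectedness, hence faithfully flat over the field $\kk$. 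The same applies to $B$.

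Next I would verify the Galois condition. The canonical map $\mathrm{can}\colon A\ot A\ra A\ot L$, $a\ot b\mapsto a\,b^{X,Y}_1\ot b^{Y,Y}_2$, should be bijective with two-sided inverse $a\ot c\mapsto a\,S_{Y,X}(c^{Y,X}_1)\ot c^{X,Y}_2$ (where $\bdt^X_{Y,Y}(c)=c^{Y,X}_1\ot c^{X,Y}_2$); both composites telescope to the identity on applying coassociativity, the antipode identities of Definition \ref{defn cogoupoid}, and the counit axioms. Symmetrically the left canonical map $A\ot A\ra H\ot A$ is bijective, with inverse built from $S_{X,Y}$. Thus $A$ is an $H$-$L$-biGalois object; running the same computations for $B$ shows $B$ is an $L$-$H$-biGalois object inverse to $A$, the canonical isomorphisms $A\sq_L B\cong H$ and $B\sq_H A\cong L$ being induced by $\bdt^Y_{X,X}$ and $\bdt^X_{Y,Y}$.

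With $A$ a biGalois object, Schauenburg's theorem \cite{sch} gives that $-\,\sq_H A\colon\mc{M}^H\ra\mc{M}^L$ is an equivalence with quasi-inverse $-\,\sq_L B$, the unit and counit being $V\cong V\sq_H(A\sq_L B)\cong V\sq_H H\cong V$, which uses associativity of the cotensor product over a faithfully flat comodule algebra and $A\sq_L B\cong H$. The monoidal structure on $-\,\sq_H A$ is supplied by the multiplication of $A$: for $V,W\in\mc{M}^H$ the assignment $(v\ot a)\ot(w\ot a')\mapsto (v\ot w)\ot aa'$ descends to a natural $L$-colinear map $(V\sq_H A)\ot(W\sq_H A)\ra(V\ot W)\sq_H A$ which is bijective by faithful flatness and the Galois property, the coherence axioms being a routine diagram chase. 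Interchanging the roles of $X$ and $Y$ yields the second functor and shows the two are mutually inverse, which is the first assertion. For the converse, a monoidal equivalence $\mc{M}^H\cong^{\ot}\mc{M}^L$ produces, again by Schauenburg, an $H$-$L$-biGalois object $A$; one then builds a cogroupoid with $\ob(\mc{C})=\{X,Y\}$, $\mc{C}(X,X)=H$, $\mc{C}(Y,Y)=L$, $\mc{C}(X,Y)=A$, $\mc{C}(Y,X)=A^{-1}$, reading the maps $\bdt$ and $S$ off the two bicomodule-algebra coactions and Schauenburg's canonical isomorphisms for the composition of biGalois objects, and checks that the cocategory pentagon and the antipode identities hold.

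The main obstacle is this reconstruction step: one must show that biGalois objects compose associatively and coherently, so that the structure maps extracted above satisfy every pentagon, counit and antipode identity of a cogroupoid exactly. This is the heart of Schauenburg's theory, so in practice the work lies in marshalling that machinery; by contrast the forward half --- the bicomodule-algebra and Galois verifications and the monoidality of the cotensor functor --- is a notation-heavy but essentially mechanical application of the cogroupoid axioms.
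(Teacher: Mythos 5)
Your proposal is correct and follows exactly the route of the source the paper cites: the paper quotes this theorem from Bichon \cite{bi1} without proof, and Bichon's argument is precisely your scheme — show $\mc{C}(X,Y)$ is an $H$-$L$-biGalois object (with inverse $\mc{C}(Y,X)$ and the cotensor decompositions induced by $\bdt^Y_{X,X}$, $\bdt^X_{Y,Y}$), invoke Schauenburg's equivalence-by-cotensor theorem for the forward direction, and use Schauenburg's reconstruction of a biGalois object from a monoidal equivalence to build the two-object connected cogroupoid for the converse. No gaps beyond the routine verifications you already flag.
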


This monoidal equivalence can be extended to categories of Yetter-Drinfeld modules.
\begin{lem}\cite[Proposition 6.2]{bi1}\label{lem yd}
Let $\mc{C}$ be a cogroupoid, $X,Y\in\ob(\mc{C})$ and $V$ a right $\mc{C}(X,X)$-module.
\begin{enumerate}
\item[(i)] $V\ot \mc{C}(X,Y)$ has a right $\mc{C}(Y,Y)$-module structure defined by
$$ (v\ot a^{X,Y})\la b^{Y,Y} =  v \cdot b_2^{X,X}\ot S_{Y,X}(b_1^{Y,X}) a^{X,Y} b_3^{X,Y}.$$
Together with the right $\mc{C}(Y,Y)$-comodule structure defined by $1\ot \bdt_{X,Y}^Y$,  $V\ot \mc{C}(X,Y)$ is a Yetter-Drinfeld module over $\mc{C}(Y,Y)$.
\item[(ii)] If moreover $V$ is a Yetter-Drinfeld module, then $V\square_{\mc{C}(X,X)}\mc{C}(X,Y)$ is a  Yetter-Drinfeld submodule of $V\ot \mc{C}(X,Y)$.
\end{enumerate}
\end{lem}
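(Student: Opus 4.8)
The plan is a direct verification from the cogroupoid axioms, organized so that (ii) follows once one knows the cotensor subspace is closed under the structures built in (i). (Conceptually this is to be expected, since the category of Yetter--Drinfeld modules is a Drinfeld center of the comodule category and the monoidal equivalence of Theorem~\ref{thm ga} induces an equivalence of centers; but the explicit cotensor functor is what we need downstream, so I would argue by hand.)

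For (i), I would first observe that $\big(\mc{C}(X,Y),\bdt^Y_{X,Y}\big)$ is a right $\mc{C}(Y,Y)$-comodule: coassociativity is the instance $(\bdt^Y_{X,Y}\ot 1)\bdt^Y_{X,Y}=(1\ot\bdt^Y_{Y,Y})\bdt^Y_{X,Y}$ of the square axiom, and counitality is the triangle $(1\ot\vps_Y)\bdt^Y_{X,Y}=\id$; hence $1\ot\bdt^Y_{X,Y}$ makes $V\ot\mc{C}(X,Y)$ a right $\mc{C}(Y,Y)$-comodule with no extra work. For the module axioms of $\la$: unitality follows by setting $b^{Y,Y}=1$ and using the counit relations $\vps_X(a^{X,X}_1)a^{X,Y}_2=a^{X,Y}=a^{X,Y}_1\vps_Y(a^{Y,Y}_2)$ together with $S_{Y,X}(1)=1$; associativity, i.e.\ the comparison of $((v\ot a)\la b)\la b'$ with $(v\ot a)\la(bb')$, is the computation making $\mc{C}(X,Y)$ into a $\mc{C}(X,X)$-$\mc{C}(Y,Y)$-biGalois object, and uses that each $\bdt^Z_{\cdot,\cdot}$ is an algebra homomorphism (so the legs of $bb'$ split as products of legs of $b$ and of $b'$), that $S_{Y,X}\colon\mc{C}(Y,X)\ra\mc{C}(X,Y)^{op}$ is an algebra homomorphism, and coassociativity to align indices; the $b^{X,X}$-leg acts on $V$ and causes no trouble since $V$ is already a $\mc{C}(X,X)$-module.

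The substance of (i) is the Yetter--Drinfeld compatibility
$$\rho\big((v\ot a)\la b\big)\;=\;\big((v\ot a)_{(0)}\la b^{Y,Y}_2\big)\ot S_{Y,Y}(b^{Y,Y}_1)(v\ot a)_{(1)}b^{Y,Y}_3,$$
where $\rho=1\ot\bdt^Y_{X,Y}$ and $S_{Y,Y}$ is the antipode of the Hopf algebra $\mc{C}(Y,Y)$. I would expand the left side by first applying $\la$ and then $\bdt^Y_{X,Y}$ to the $\mc{C}(X,Y)$-component $S_{Y,X}(b^{Y,X}_1)\,a\,b^{X,Y}_3$; since $\bdt^Y_{X,Y}$ is an algebra map this splits into three coproducts, the coproduct of $S_{Y,X}(b^{Y,X}_1)$ being rewritten via $\bdt^Z_{X,Y}(S_{Y,X}(a^{Y,X}))=S_{Z,X}(a^{Z,X}_2)\ot S_{Y,Z}(a^{Y,Z}_1)$ with $Z=Y$. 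Expanding the right side is routine (apply the $\mc{C}(X,Y)$-coaction, then $\la$). Both sides then become expressions in $V\ot\mc{C}(X,Y)\ot\mc{C}(Y,Y)$ built from iterated coproducts of $a$ and $b$, the maps $S_{Y,X}$ and $S_{Y,Y}$, and the $\mc{C}(X,X)$-action on $V$, which I would match using the antipode axioms of $\mc{C}(Y,Y)$ and the cogroupoid antipode relations $S_{X,Y}(a^{X,Y}_1)a^{Y,X}_2=\vps_X(a^{X,X}_1)1=a^{X,Y}_1S_{Y,X}(a^{Y,X}_2)$ to collapse the interior factors. This bookkeeping is the main obstacle; everything else is formal.

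For (ii), write $V\sq_{\mc{C}(X,X)}\mc{C}(X,Y)$ as the equalizer in $V\ot\mc{C}(X,Y)$ of the two maps $v\ot a\mapsto v_{(0)}\ot v_{(1)}\ot a$ and $v\ot a\mapsto v\ot a^{X,X}_1\ot a^{X,Y}_2$, i.e.\ of the right $\mc{C}(X,X)$-coaction on $V$ against the left $\mc{C}(X,X)$-coaction $\bdt^X_{X,Y}$ on $\mc{C}(X,Y)$. Stability under $\rho=1\ot\bdt^Y_{X,Y}$ is immediate: applying $\bdt^Y_{X,Y}$ to the last leg of the defining identity and using $(\bdt^X_{X,Y}\ot 1)\bdt^Y_{X,Y}(a)=a^{X,X}_1\ot a^{X,Y}_2\ot a^{Y,Y}_3=(1\ot\bdt^Y_{X,Y})\bdt^X_{X,Y}(a)$ shows $\rho(\xi)$ still satisfies the equalizer condition in its first two slots. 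Stability under $\la$ is the analogue of the computations above: one feeds $\xi\la b$ into both maps of the equalizer, moves the $b^{X,X}$-leg past the $\mc{C}(X,X)$-coaction on $V$ using the Yetter--Drinfeld compatibility of $V$ over $\mc{C}(X,X)$ (now available since $V$ is assumed to be a Yetter--Drinfeld module), and then applies the antipode formula for $S_{Y,X}$ together with the cogroupoid antipode relations to land back in the equalizer; this is again where the real work lies, but it parallels what has already been done. Finally the Yetter--Drinfeld axiom for $V\sq_{\mc{C}(X,X)}\mc{C}(X,Y)$ is inherited from $V\ot\mc{C}(X,Y)$, being a subspace stable under both structures, so it is a Yetter--Drinfeld submodule.
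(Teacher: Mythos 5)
This lemma is quoted in the paper with a citation to \cite[Proposition 6.2]{bi1} and no proof is given there; your direct verification is correct and is essentially the argument of the cited source: the comodule and module axioms follow from the cocategory axioms and the fact that the maps $\bdt^{\bullet}_{\bullet,\bullet}$ and $S_{Y,X}\colon\mc{C}(Y,X)\ra\mc{C}(X,Y)^{op}$ are algebra maps, the Yetter--Drinfeld compatibility and the stability of the cotensor product under the action both reduce to the formula $\bdt_{X,Y}^Z(S_{Y,X}(a^{Y,X}))=S_{Z,X}(a_2^{Z,X})\ot S_{Y,Z}(a_1^{Y,Z})$ (with $Z=Y$, resp.\ $Z=X$) together with coassociativity, and for (ii) the Yetter--Drinfeld condition on $V$ over $\mc{C}(X,X)$ enters exactly where you say it does.
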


\begin{thm}\cite[Theorem 6.3]{bi1}
Let $\mc{C}$ be a connected cogroupoid. Then for any $X,Y\in \ob(\mc{C})$, the functor
$$\begin{array}{rcl}\mc{YD}_{\mc{C}(X,X)}  ^{\mc{C}(X,X)}&\lra&\mc{YD}_{\mc{C}(Y,Y)} ^{\mc{C}(Y,Y)}\\
V&\longmapsto&V\sq_{\mc{C}(X,X)}\mc{C}(X,Y)
\end{array}$$
is a monoidal equivalence.
\end{thm}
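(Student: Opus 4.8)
The plan is to upgrade the comodule-level monoidal equivalence of Theorem \ref{thm ga} to the Yetter-Drinfeld level, using Lemma \ref{lem yd} to see that the cotensor functor already respects all of the Yetter-Drinfeld structure, and then arguing that the monoidal constraints and the quasi-inverse needed at the Yetter-Drinfeld level are literally the ones inherited from the comodule level. Write $F:=-\sq_{\mc{C}(X,X)}\mc{C}(X,Y)$ and $G:=-\sq_{\mc{C}(Y,Y)}\mc{C}(Y,X)$, and let $F^{\mathrm{c}},G^{\mathrm{c}}$ denote the same-looking functors between the comodule categories, which by Theorem \ref{thm ga} are mutually inverse monoidal equivalences. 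Let $\mathrm{U}$ be the forgetful functor from a category of Yetter-Drinfeld modules to its underlying category of comodules; it is faithful and strict monoidal, and by construction $\mathrm{U}F=F^{\mathrm{c}}\mathrm{U}$ and $\mathrm{U}G=G^{\mathrm{c}}\mathrm{U}$.

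First I would check that $F$ and $G$ are well defined on Yetter-Drinfeld modules. On objects this is exactly Lemma \ref{lem yd}(ii): if $V$ is a Yetter-Drinfeld module over $\mc{C}(X,X)$, then $V\ot\mc{C}(X,Y)$ is a Yetter-Drinfeld module over $\mc{C}(Y,Y)$ via the action $(v\ot a^{X,Y})\la b^{Y,Y}=v\cdot b_2^{X,X}\ot S_{Y,X}(b_1^{Y,X})a^{X,Y}b_3^{X,Y}$ and the coaction $1\ot\bdt^Y_{X,Y}$, and $V\sq_{\mc{C}(X,X)}\mc{C}(X,Y)$ is a Yetter-Drinfeld submodule, which is by definition $F(V)$. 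On morphisms, a morphism $f$ of Yetter-Drinfeld modules is both comodule-linear and module-linear over $\mc{C}(X,X)$; the former shows $f\ot 1$ restricts to a comodule map $F(f)$ between the cotensor products, and the latter, together with the displayed action formula, shows that $f\ot 1$ — hence its restriction $F(f)$ — commutes with the $\mc{C}(Y,Y)$-action. So $F\colon\mc{YD}_{\mc{C}(X,X)}^{\mc{C}(X,X)}\to\mc{YD}_{\mc{C}(Y,Y)}^{\mc{C}(Y,Y)}$ is a functor, and symmetrically so is $G$.

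Next I would reduce everything to a short list of module-linearity statements. The functor $F^{\mathrm{c}}$ carries monoidal constraints $F^{\mathrm{c}}_0\colon\kk\to F^{\mathrm{c}}(\kk)$ and $F^{\mathrm{c}}_2\colon F^{\mathrm{c}}(-)\ot F^{\mathrm{c}}(-)\Rightarrow F^{\mathrm{c}}(-\ot-)$, and the equivalence $F^{\mathrm{c}}\dashv G^{\mathrm{c}}$ comes with unit $\eta^{\mathrm{c}}$ and counit $\vps^{\mathrm{c}}$; all of these are comodule maps. The claim is that, evaluated on objects which happen to be Yetter-Drinfeld modules, each of $F^{\mathrm{c}}_0$, $F^{\mathrm{c}}_2$, $\eta^{\mathrm{c}}$, $\vps^{\mathrm{c}}$ (and symmetrically for $G$) is in addition module-linear, i.e.\ lies in the image of $\mathrm{U}$. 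Granting this, one defines $F_0,F_2,\eta,\vps$ to be the unique Yetter-Drinfeld morphisms sent by $\mathrm{U}$ to $F^{\mathrm{c}}_0,F^{\mathrm{c}}_2,\eta^{\mathrm{c}},\vps^{\mathrm{c}}$; since $\mathrm{U}$ is faithful and strict monoidal, the naturality squares, the pentagon and unit coherences for $(F,F_2,F_0)$, and the two triangle identities for $F\dashv G$ all follow from the corresponding identities for $F^{\mathrm{c}}$ in Theorem \ref{thm ga}. Hence $(F,F_2,F_0)$ is a strong monoidal functor and $F\dashv G$ an adjoint equivalence, so $F$ is a monoidal equivalence.

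The only thing left is the module-linearity checks, and this is where I expect the real work to be. For $F^{\mathrm{c}}_0$ and for $\eta^{\mathrm{c}},\vps^{\mathrm{c}}$ these are short computations using the explicit form of the unit, counit, and unit-object isomorphism underlying Theorem \ref{thm ga} together with the antipode axioms $S_{X,Y}(a_1^{X,Y})a_2^{Y,X}=\vps_X(a_1^{X,X})1=a_1^{X,Y}S_{Y,X}(a_2^{Y,X})$. The genuinely delicate case — and the main obstacle — is the monoidal constraint $F^{\mathrm{c}}_2\colon(V\sq_{\mc{C}(X,X)}\mc{C}(X,Y))\ot(W\sq_{\mc{C}(X,X)}\mc{C}(X,Y))\to(V\ot W)\sq_{\mc{C}(X,X)}\mc{C}(X,Y)$, which on representatives sends $(v\ot a)\ot(w\ot c)$ to $v\ot w\ot ac$ using the multiplication of $\mc{C}(X,Y)$. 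On the source, the $\mc{C}(Y,Y)$-action on the tensor product of two modules of the type in Lemma \ref{lem yd}(i) is obtained by first splitting the acting element $b$ via $\bdt^Y_{Y,Y}$ and then applying the three-fold splitting of Lemma \ref{lem yd}(i) to each of the two factors; on the target one has the single action $(v\ot w\ot e)\la b=(v\ot w)\cdot b_2^{X,X}\ot S_{Y,X}(b_1^{Y,X})\,e\,b_3^{X,Y}$ with $V\ot W$ a diagonal $\mc{C}(X,X)$-module. One shows these agree by a Sweedler-notation computation: expand both sides using the coassociativity diagrams of the cogroupoid, use that $S_{Y,X}$ is an algebra anti-homomorphism and satisfies $\bdt^Z_{X,Y}(S_{Y,X}(a^{Y,X}))=S_{Z,X}(a_2^{Z,X})\ot S_{Y,Z}(a_1^{Y,Z})$, and cancel the surplus factors via the antipode axioms; the diagonal $\mc{C}(X,X)$-action on $V\ot W$ is precisely what gets produced by recombining the two separate actions on $V$ and on $W$. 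Once this identity is verified, the reduction above delivers the theorem.
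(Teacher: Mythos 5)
The paper offers no proof of this statement at all: it is quoted directly from Bichon \cite{bi1} (Theorem 6.3 there), just as Theorem \ref{thm ga} and Lemma \ref{lem yd} are, so there is nothing in the paper to compare against except the citation. Your reconstruction is correct and is essentially the standard (Bichon-style) argument: restrict the comodule-level equivalence of Theorem \ref{thm ga} along the forgetful functor using Lemma \ref{lem yd}, check that the monoidal constraints and the unit/counit of the adjunction are additionally module-linear, and let faithfulness and strict monoidality of the forgetful functor transport naturality, coherence and the triangle identities. One remark on the step you single out as the main obstacle: the $\mc{C}(Y,Y)$-linearity of the multiplication map holds already on the full tensor product $(V\ot\mc{C}(X,Y))\ot(W\ot\mc{C}(X,Y))$, before restricting to cotensor products, because in the expression
$$v\cdot b_2^{X,X}\ot w\cdot b_5^{X,X}\ot S_{Y,X}(b_1^{Y,X})\,a\,b_3^{X,Y}S_{Y,X}(b_4^{Y,X})\,c\,b_6^{X,Y}$$
generalized coassociativity lets you view $b_3^{X,Y}\ot b_4^{Y,X}$ as $\bdt^Y_{X,X}$ of a single $\mc{C}(X,X)$-leg, so the antipode axiom $a_1^{X,Y}S_{Y,X}(a_2^{Y,X})=\vps_X(a^{X,X})1$ collapses it and yields exactly the target action with the diagonal $\mc{C}(X,X)$-action on $V\ot W$; neither the cotensor condition nor the Yetter-Drinfeld compatibility is needed there (the Yetter-Drinfeld condition enters instead when checking that the unit and counit isomorphisms of the adjunction are module maps). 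So the deferred computations do go through and your outline constitutes a complete proof strategy.
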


\subsection{Calabi-Yau algebras}
In this subsection, we recall the definition of (twisted) Calabi-Yau algebras.
\begin{defn}\label{defn tcy}An algebra $A$ is called  a \it{twisted Calabi-Yau algebra  of dimension
$d$} if
\begin{enumerate}\item[(i)] $A$ is \it{homologically smooth}, that is, $A$ has
a bounded resolution by finitely generated projective
$A^e$-modules; \item[(ii)] There is an automorphism $\mu$ of $A$ such that
\begin{equation}\Ext_{A^e}^i(A,A^e)\cong\begin{cases}0,& i\neq d
\\A^\mu,&i=d\end{cases}\end{equation}
as $A^e$-modules.
\end{enumerate}
If such an automorphism $\mu$ exists, it is unique up to an inner automorphism and is called the \it{Nakayama automorphism} of $A$. In the definition, the dimension $d$ is usually called the Calabi-Yau dimension of $A$.  A \it{Calabi-Yau algebra} in the sense of Ginzburg \cite{g2} is a twisted Calabi-Yau algebra whose Nakayama automorphism is an inner automorphism. In what follows, Calabi-Yau is abbreviated to CY for short.
\end{defn}

Twisted CY algebras include CY algebras as a subclass. They are the natural algebraic analogues of the Bieri-Eckmann duality groups \cite{be}.  The twisted CY property of noncommutative algebras has been studied under other names for many years, even before the definition of a CY algebra. Rigid dualizing complexes of noncommutative algebras were studied in \cite{vdb}. The twisted CY property
was called ``rigid Gorenstein'' in \cite{bz} and was called ``skew Calabi-Yau'' in a recent paper \cite{rrz}.

\section{Calabi-Yau Property}\label{2}

\subsection{Hopf algebra preparations}In this subsection, we provide two results about Hopf algebras as preparations. They may be well-known, but we could not find any reference, so we give  a complete account of proofs here. We donot require bijectiveness of antipode or Noetherianness of a Hopf algebra.

First we want to show that for a Hopf algebra,  the left global dimension always equals the right global dimension.

Let $H$ be a Hopf algebra.  We denote the left global dimension, the right global dimension and the Hochschild dimension of $H$ by $\lgldim(H)$, $\rgldim(H)$ and $\Hdim(H)$, respectively. The left adjoint functor $L:\Mod$-$H^e\ra \Mod$-$H$ is defined by the algebra homomorphism $(\id\ot S)\circ \bdt:H\ra H^e$. Similarly, the algebra homomorphism $\tau\circ(S\ot \id)\circ \bdt:H\ra (H^e)^{op}=H^e$ defines the right adjoint functor  $R:\Mod$-$(H^e)^{op}\ra \Mod$-$H^{op}$, where $\tau: H^{op}\ot H\ra H\ot H^{op}$ is the flip map.  Let $M$ be an $H$-bimodule. Then $L(M)$ is a left $H$-module defined by the action
$$x\ra m =x_1mS(x_2),$$
for any $x\in H$.
While $R(M)$ is a right $H$-module defined by  the action
$$ m \la x=S(x_1)m x_2,$$
for any $x\in H$.

The algebra $H^e$ is a left and right $H^e$-module respectively as in the following ways:
\begin{equation}\label{HHlm}(a\ot b)\ra (x\ot y)=a  x\ot y  b,\end{equation}
and \begin{equation}\label{HHrm}(x\ot y)\la(a\ot b) =x  a\ot b  y.\end{equation}
for any $x\ot y$ and $a\ot b \in H^e$. So $L(H^e)$ and $R(H^e)$ are $H$-$H^e$ and $H^e$-$H$-bimodules, where the corresponding $H$-module structures are given by
$$a\ra (x\ot y)=a_1  x\ot y  S(a_2)$$
and
$$(x\ot y) \leftarrow a=xa_2\ot S(a_1)y  $$
for any $a\in H$ and $x\ot y\in H^e$, respectively.

Let ${}_*H\ot H$ be the free left $H$-module, where the structure is given by the left multiplication to the first factor $H$. Similarly, let
$H_*\ot H$ be the free right $H$-module defined by the right multiplication to the first factor $H$. Moreover, we give ${}_*H\ot H$ a right $H^e$-module structure such that
\begin{equation}\label{HHEr} (x\ot y)\leftarrow (a\otimes b)=xa_1\ot by  S^2(a_2)\end{equation}
and $H_*\ot H$ a left $H^e$-module structure via
\begin{equation}\label{HHEl} (a\otimes b)\ra (x\ot y)=a_2x\ot S^2(a_1)yb\end{equation}
for any $x\otimes y\in {}_*H\ot H$ or $H_*\ot H$ and $a\otimes b\in H^e$.

%

\begin{lem}\label{lem free}
Retain the above notations. Then we have
\begin{itemize}
\item[(i)] $L(H^e)\cong {}_*H\otimes H$ as $H$-$H^e$-bimodules.
\item[(ii)] $R(H^e)\cong H_*\otimes H$ as $H^e$-$H$-bimodules.
\end{itemize}
\end{lem}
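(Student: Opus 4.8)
The plan is to prove (i) by writing down an explicit $\kk$-linear map together with a two-sided inverse and then checking, by direct Sweedler computations, that both maps intertwine the left $H$-action and the right $H^e$-action; part (ii) is the mirror image.

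For (i) I would take
\[
\alpha\colon {}_*H\ot H\lra L(H^e),\quad \alpha(x\ot y)=x_1\ot yS(x_2);
\qquad
\beta\colon L(H^e)\lra {}_*H\ot H,\quad \beta(x\ot y)=x_1\ot yS^2(x_2),
\]
and verify three things. First, $\alpha$ and $\beta$ are mutually inverse: e.g. $\beta\alpha(x\ot y)=x_1\ot yS(x_3)S^2(x_2)$, and since $S$ is an anti-homomorphism the tail equals $S\bigl(S(x_2)x_3\bigr)$, so applying $\id\ot S$ to the antipode identity $\sum x_1\ot S(x_2)x_3=x\ot 1$ gives $\beta\alpha=\id$ (and $\alpha\beta=\id$ is the same with $S$ and $S^2$ swapped). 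Second, $\alpha$ and $\beta$ are left $H$-linear: expanding $\alpha(ax\ot y)$ via $\bdt(ax)=a_1x_1\ot a_2x_2$ and $S(a_2x_2)=S(x_2)S(a_2)$ yields $a_1x_1\ot yS(x_2)S(a_2)$, which is exactly $a\ra\alpha(x\ot y)$ for the action $a\ra(x\ot y)=a_1x\ot yS(a_2)$ on $L(H^e)$. Third, $\alpha$ and $\beta$ are right $H^e$-linear: computing $\alpha\bigl((x\ot y)\la(a\ot b)\bigr)$ from \eqref{HHEr} produces a factor $S^2(a_3)S(a_2)=S\bigl(a_2S(a_3)\bigr)$ that reduces to a counit term by the antipode axiom, leaving exactly $\alpha(x\ot y)\la(a\ot b)$ as computed from \eqref{HHrm}; for $\beta$ the $S^2$ that survives in \eqref{HHEr} is absorbed by the $S^2(x_2)$ built into $\beta$.

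For (ii) I would repeat the argument for the functor $R$, restriction of scalars along $\tau\circ(S\ot\id)\circ\bdt$, with the maps
\[
\delta\colon R(H^e)\lra H_*\ot H,\quad \delta(x\ot y)=x_2\ot S^2(x_1)y;
\qquad
\varepsilon\colon H_*\ot H\lra R(H^e),\quad \varepsilon(x\ot y)=x_2\ot S(x_1)y,
\]
checking left $H^e$-linearity against \eqref{HHlm} and \eqref{HHEl}, right $H$-linearity against $(x\ot y)\la a=xa_2\ot S(a_1)y$ and the free right $H$-structure, and $\varepsilon\delta=\delta\varepsilon=\id$; these are all the left/right mirror images of the steps in (i).

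The only delicate point is not the bimodule axioms as such but the placement of the antipode: since $S$ is \emph{not} assumed bijective, the isomorphism must be arranged so that only $S$ and $S^2$ — never $S^{-1}$ — appear; the naive guess $x\ot y\mapsto x_1\ot yx_2$, for instance, would require $S^{-1}$ in its inverse. The maps above work precisely because $S^2$ is an \emph{algebra} homomorphism and because every cancellation can be arranged to use only the one-sided identity $\sum x_1\ot S(x_2)x_3=x\ot 1$ (equivalently $\sum S(x_1)x_2=\vps(x)1$); this is also what dictates the powers of $S$ appearing in \eqref{HHEr}, \eqref{HHEl} and in $\alpha,\beta,\delta,\varepsilon$.
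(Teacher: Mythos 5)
Your proposal is correct and matches the paper's proof: you use exactly the same four maps ($x\ot y\mapsto x_1\ot yS(x_2)$, $x\ot y\mapsto x_1\ot yS^2(x_2)$ for (i) and their mirror images $x\ot y\mapsto x_2\ot S(x_1)y$, $x\ot y\mapsto x_2\ot S^2(x_1)y$ for (ii)), and the paper simply declares the bimodule checks ``straightforward,'' which are the Sweedler computations you outline. Your added remark on why only $S$ and $S^2$ (never $S^{-1}$) can appear is a sensible elaboration, not a deviation.
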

\begin{proof}
It is straightforward to check the corresponding isomorphisms of bimodules are given by the following four homomorphisms.
$$L(H^e) \ra {}_*H\ot H, \;\;x\ot y \mapsto x_1 \ot y S^2(x_2)$$
with inverse
$${}_*H\ot H \ra L(H^e), \;\;x\ot y \mapsto   x_1 \ot y S(x_2),$$
and
$$ R(H^e) \ra H_*\ot H,\;\;x\ot y  \mapsto  x_2 \ot S^2(x_1)y $$
with inverse
$$
 H_*\ot H \ra R(H^e),\;\;x\ot y \mapsto  x_2 \ot S(x_1) y .$$
\end{proof}

The following is Lemma 2.4 in \cite{bz}. For the sake of completeness, we include a proof here.
\begin{lem}\label{lem rlk} Let $H$ be a Hopf algebra and $M$ an $H$-bimodule.
\begin{enumerate}
\item[(i)] $\Ext^i_{H^e}(H,M)\cong \Ext^i_H({}_\vps\kk,L(M)),$
for all $i\le 0$.
\item[(ii)] $\Ext^i_{H^e}(H,M)\cong \Ext^i_{H^{op}}(\kk_\vps,R(M)),$
for all $i\le 0$.
\end{enumerate}
\end{lem}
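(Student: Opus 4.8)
The plan is to identify $H^e$ with $H \ot H^{op}$ and exploit the adjunction between the functors $L$ (resp.\ $R$) and a suitable induction/restriction pair, reducing an $\Ext$ computation over $H^e$ to one over $H$ (resp.\ $H^{op}$). Concretely, for part (i), the key is the isomorphism $L(H^e) \cong {}_*H \ot H$ of $H$-$H^e$-bimodules furnished by Lemma \ref{lem free}(i): the right-hand side is a \emph{free} left $H$-module. First I would take a resolution $P_\bullet \to {}_\vps\kk$ of the trivial left $H$-module by finitely generated free left $H$-modules; since ${}_*H \ot H$ is free as a left $H$-module, applying the functor $(-) \ot_H ({}_*H \ot H)$ — more precisely, using $L(H^e)$ as an $H$-$H^e$-bimodule to induce up — sends $P_\bullet$ to a complex of projective right $H^e$-modules, and I claim this is a projective resolution of $H$ as an $H^e$-module. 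This last claim is exactly the statement that $L$ applied to the bar-type (or standard Hopf-algebraic) resolution of $H$ over $H^e$ recovers the standard resolution of $\kk$ over $H$; equivalently, $H \cong {}_\vps\kk \ot_H ({}_*H \ot H)$ compatibly, and the functor is exact on these modules because everything in sight is $H$-free on the relevant side.

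With that resolution in hand, the computation is the standard adjunction shuffle:
\begin{align*}
\Ext^i_{H^e}(H, M) &\cong \H^i\bigl(\Hom_{H^e}(L(H^e) \ot_H P_\bullet,\, M)\bigr)\\
&\cong \H^i\bigl(\Hom_H(P_\bullet,\, \Hom_{H^e}(L(H^e), M))\bigr)\\
&\cong \H^i\bigl(\Hom_H(P_\bullet,\, L(M))\bigr) \cong \Ext^i_H({}_\vps\kk, L(M)),
\end{align*}
where the middle step is the tensor-hom adjunction and the third step is the observation that $\Hom_{H^e}(L(H^e), M) \cong L(M)$ as left $H$-modules — i.e.\ $L$ is represented by $L(H^e)$, which follows from $L$ being defined via the algebra map $(\id \ot S)\circ \bdt : H \to H^e$ together with the bimodule identification of Lemma \ref{lem free}(i). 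Part (ii) is entirely parallel: one uses Lemma \ref{lem free}(ii) to see $R(H^e) \cong H_* \ot H$ is free as a \emph{right} $H$-module, takes a finitely generated free resolution of $\kk_\vps$ over $H^{op}$, induces up to $H^e$, and runs the same adjunction with the roles of left and right swapped via the flip $\tau$.

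The main obstacle I anticipate is \emph{not} the adjunction bookkeeping but verifying the exactness claim — that inducing a projective resolution of ${}_\vps\kk$ along $L(H^e)$ really produces a projective resolution of $H$ over $H^e$. The clean way to see this is to fix one concrete resolution where it is transparent: take the normalized bar resolution $B_\bullet(H)$ of $H$ over $H^e$, whose terms $H \ot \bar H^{\ot n} \ot H$ are free $H^e$-modules; applying $L$ and using Lemma \ref{lem free}(i) term-by-term should identify $L(B_\bullet(H))$ with (a shift/relabeling of) the bar resolution $B_\bullet(\kk)$ of ${}_\vps\kk$ over $H$, tensored up to a free $H$-$H^e$-bimodule complex, with differentials matching on the nose after a Sweedler-notation computation. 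Once this identification is checked on the bar resolution, the general statement follows, and the displayed chain of isomorphisms goes through verbatim; the restriction $i \ge 0$ (written $i \le 0$ in the statement, with the paper's convention $\le\,=\,\geqslant$) is automatic since $\Ext^i$ vanishes in negative degrees.
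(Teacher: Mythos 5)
Your overall strategy --- resolve ${}_\vps\kk$ projectively over $H$, induce up to $H^e$ along the algebra map $a\mapsto a_1\ot S(a_2)$, and conclude by tensor-hom adjunction (a Shapiro-type change of rings in the \emph{first} variable) --- is a legitimate route, and it genuinely differs from the paper's proof, which never resolves the first argument: the paper shows $L\cong\Hom_{H^e}({}_{H^e}H^e_H,-)$ is exact and preserves injectives, and resolves $M$ injectively. However, your execution has a concrete flaw: the object you use to induce and to represent $L$ is the wrong one. Lemma \ref{lem free}(i) gives $L(H^e)\cong{}_*H\ot H$ as a bimodule with a \emph{left} $H$-structure and a \emph{right} $H^e$-structure; with those structures the expression $L(H^e)\ot_H P_\bullet$ with $P_\bullet$ a complex of left $H$-modules is not even defined, and the key claim ``$\Hom_{H^e}(L(H^e),M)\cong L(M)$ as left $H$-modules'' is false: taking $\Hom$ over the free right $H^e$-structure \eqref{HHrm} and evaluating at $1\ot 1$, the leftover structure coming from the left $H$-action on $L(H^e)$ is a \emph{right} $H$-action, namely $m\la a=S(a_2)\,m\,a_1$, which is neither $L(M)$ nor $R(M)$. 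Likewise your ``equivalently, $H\cong{}_\vps\kk\ot_H({}_*H\ot H)$'' does not hold: after fixing the sides so the tensor makes sense, the residual right $H^e$-structure \eqref{HHEr} involves $S^2$, so the induced module is the twist $H^{S^2}$ rather than $H$. Run literally, your chain of isomorphisms computes $\Ext_{H^e}$ of that twist, which is not the statement of the lemma (and the lemma carries no hypothesis forcing $S^2$ to be inner, or even $S$ to be bijective).

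The repair is exactly the bimodule the paper introduces inside its proof: $H^e$ with its regular left $H^e$-structure \eqref{HHlm} and the right $H$-action $(x\ot y)\la a=xa_1\ot S(a_2)y$, i.e.\ restriction on the right along $a\mapsto a_1\ot S(a_2)$. This ${}_{H^e}H^e_H$ is free as a right $H$-module (the paper exhibits an explicit isomorphism with $H_*\ot H$; note this is \emph{not} Lemma \ref{lem free}(i), which concerns the other restriction), it represents $L$ in the sense that $\Hom_{H^e}({}_{H^e}H^e_H,M)\cong L(M)$ as left $H$-modules, and ${}_{H^e}H^e_H\ot_H{}_\vps\kk\cong H$ as $H^e$-modules via the multiplication map. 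With this object your argument does go through cleanly: by freeness the induction ${}_{H^e}H^e_H\ot_H(-)$ is exact and preserves projectives, so it turns a projective resolution of ${}_\vps\kk$ into a projective $H^e$-resolution of $H$, and adjunction gives the asserted isomorphism with no bar-resolution verification needed; part (ii) is symmetric, using the left restriction and $\kk_\vps$. So the idea is salvageable and, once corrected, gives a proof by projective resolutions of $\kk$ that complements the paper's injective-resolution argument; but as written, the representability and induction steps rest on a bimodule with the wrong variance and on an identification that fails.
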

\proof We only prove (i), the proof of (ii) is quite similar. We view $H^e$ as an $H^e$-$H$-bimodule, where the left $H^e$-action is given by \eqref{HHlm} and the right $H$-action is given by
$$(x\ot y)\la a=xa_1\ot S(a_2)y$$
for any $x\ot y\in H^e$ and $a\in H$. We simply write this bimodule as ${}_{H^e}H^e_H$. Note that the bimodule ${}_{H^e}H^e_H$ is a free right $H$-module, where the isomorphisms are given by
$${}_{H^e}H^e_H \ra H_*\ot H, \;\;x\ot y \mapsto x_1 \ot x_2 y,$$
with inverse
$$ H_*\ot H\ra {}_{H^e}H^e_H, \;\;x\ot y \mapsto x_1 \ot S(x_2) y.$$
It is easy to check that the functor $L:\Mod$-$H^e\ra \Mod$-$H$ is isomorphic to the functor $\Hom_{H^e}({}_{H^e}H^e_H,-)$. Hence, the functor $L$ is exact. Moreover, we have $$\begin{array}{rcl}\Hom_H(-,L(M))&\cong&\Hom_H(-,\Hom_{H^e}({}_{H^e}H^e_H,M))\\
&\cong &\Hom_{H^e}({}_{H^e}H^e_H\ot_H-,M)
\end{array}$$
for any $H$-bimodule $M$. So if $M$ is an injective bimodule, $L(M)$ is an injective left $H$-module. That is, the functor $L$ preserves injectivity.  Since $\Hom_{H^e}(H, M)\cong \Hom_H({}_\vps\kk, L(M))$, we have $\Ext^i_{H^e}(H,M)\cong \Ext^i_H({}_\vps\kk,L(M)),$ for any $i\le0$.\qed

It is well-known that there is an equivalence of categories between the category of left $H^e$-modules and the category of right $H^e$-modules for $(H^e)^{op}=H^e$. As a consequence, $\Ext_{H^e}^i(H,H^e)$ can be computed both by using the left and the right $H^e$-module structures on $H^e$ defined in \eqref{HHlm} and \eqref{HHrm}.

\begin{prop}\label{prop lrAS}
For any Hopf algebra $H$, we have
$$\Ext_{H^e}^i(H,H^e)\cong \Ext_H^i({}_\vps\kk,H)\otimes H\cong \Ext^i_{H^{op}}(\kk_\vps,H)\otimes H$$
as $H^e$-modules for all $i\ge 0$, where the $H^e$-module structures on $\Ext_H^i({}_\vps\kk,H)\otimes H$ and on $\Ext^i_{H^{op}}(\kk_\vps,H)\otimes H$ are induced by \eqref{HHEr} and \eqref{HHEl} respectively.
\end{prop}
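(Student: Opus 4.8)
The plan is to reduce the computation of $\Ext_{H^e}^i(H,H^e)$ to a computation over $H$ (resp.\ $H^{op}$) by combining Lemma \ref{lem rlk} with Lemma \ref{lem free}, and then to track the module structures carefully. First I would apply Lemma \ref{lem rlk}(i) with $M = H^e$, where $H^e$ is regarded as an $H$-bimodule via the \emph{left} $H^e$-module structure \eqref{HHlm} on the outer copy and the right $H^e$-module structure \eqref{HHrm} retained as extra symmetry; this gives $\Ext_{H^e}^i(H,H^e) \cong \Ext_H^i({}_\vps\kk, L(H^e))$ for all $i \ge 0$. (Strictly, Lemma \ref{lem rlk} is stated for $i \le 0$, i.e.\ with the conventions of that lemma; I would apply it for all $i \ge 0$ since the functor $L$ was shown there to be exact and to preserve injectives, so it computes derived functors.) By Lemma \ref{lem free}(i), $L(H^e) \cong {}_*H \ot H$ as $H$-$H^e$-bimodules, where the left $H$-module structure is free (left multiplication on the first tensorand) and the right $H^e$-module structure is \eqref{HHEr}.

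The key point is then that, since ${}_*H \ot H$ is \emph{free} as a left $H$-module on the set $1 \ot H$ (a basis indexed by any $\kk$-basis of the second tensorand), one has
\[
\Ext_H^i({}_\vps\kk,\, {}_*H \ot H) \cong \Ext_H^i({}_\vps\kk, H) \ot H
\]
as vector spaces, by taking a projective resolution $P_\bullet \to {}_\vps\kk$ of finitely generated projective left $H$-modules — or in general just noting that $\Hom_H(P, {}_*H\ot H) \cong \Hom_H(P,H)\ot H$ naturally for $P$ finitely generated projective, and that ${}_*H\ot H \cong \bigoplus H$ so cohomology commutes with the direct sum in the relevant sense; more robustly, $-\ot H$ is exact so it commutes with taking cohomology of the complex $\Hom_H(P_\bullet, H)\ot H$. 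The remaining and only substantive task is to verify that this vector-space isomorphism is compatible with the $H^e$-module structures: the right $H^e$-action \eqref{HHEr} on ${}_*H\ot H$ must be shown to induce, under the above identification, exactly the $H^e$-action \eqref{HHEr} on $\Ext_H^i({}_\vps\kk,H)\ot H$ claimed in the statement. Here one uses that in \eqref{HHEr}, $(x\ot y)\la(a\ot b) = xa_1 \ot by\,S^2(a_2)$, the $a_1$ acts on the free first tensorand (hence passes to act on $\Ext_H^i({}_\vps\kk,H)$ via the standard right $H$-action on $\Ext$ coming from the right multiplication of $H$ on itself as coefficient module — which is exactly the action appearing in $\Ext_H^i({}_\vps\kk,H)$ being an $H$-module), while $b(-)S^2(a_2)$ acts on the second tensorand $H$; this is visibly \eqref{HHEr} again. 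The second isomorphism, with $H^{op}$ and $\kk_\vps$, is proved symmetrically using Lemma \ref{lem rlk}(ii), Lemma \ref{lem free}(ii), and \eqref{HHEl}.

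The main obstacle I anticipate is bookkeeping rather than conceptual: one must be scrupulous about (a) which of the two commuting $H^e$-structures on $H^e$ is being "used up" by the $\Hom_{H^e}(H,-)$ and which survives to the output, and (b) matching Sweedler indices so that the surviving action genuinely lands on the nose as \eqref{HHEr}/\eqref{HHEl} rather than some twisted variant — in particular the appearance of $S^2$ in \eqref{HHEr}, \eqref{HHEl} is precisely what comes out of the $L$-adjoint computation in Lemma \ref{lem free} (the isomorphism there involves $S^2(x_2)$), so the $S^2$'s must be threaded through consistently. A secondary point requiring a line of justification is the extension of Lemma \ref{lem rlk} from the range $i\le 0$ to all $i\ge 0$ (equivalently, confirming the indexing convention), which follows immediately from the exactness of $L$ and its preservation of injectives established in that lemma's proof.
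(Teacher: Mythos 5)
Your proposal follows the paper's proof essentially verbatim: apply Lemma \ref{lem rlk} and Lemma \ref{lem free} to identify $\Ext_{H^e}^i(H,H^e)$ with $\Ext_H^i({}_\vps\kk,{}_*H\ot H)$ (and symmetrically with $\Ext^i_{H^{op}}(\kk_\vps,H_*\ot H)$), pull the second tensor factor out of the $\Ext$, and check that the surviving right (resp.\ left) $H^e$-action is precisely \eqref{HHEr} (resp.\ \eqref{HHEl}), which is exactly the paper's argument, with your structure-tracking if anything more detailed. (Also, the range ``$i\le 0$'' in the source of Lemma \ref{lem rlk} is the paper's macro convention for $i\ge 0$, so no separate extension of that lemma is actually needed.)
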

\begin{proof}
We apply Lemmas \ref{lem free} and \ref{lem rlk} to get
$$\Ext_{H^e}^i(H,H^e)\cong \Ext_H^i({}_\vps\kk,L(H^e))\cong \Ext_H^i({}_\vps\kk, {}_*H\otimes H)\cong \Ext_H^i({}_\vps\kk,H)\otimes H$$
and
$$\Ext_{H^e}^i(H,H^e)\cong \Ext_H^i(\kk_\vps,R(H^e))\cong \Ext_H^i({}_\vps\kk, H_*\otimes H)\cong \Ext_H^i(\kk_\vps,H)\otimes H.$$
Moreover, all the homomorphisms above are isomorphisms of $H^e$-modules.
\end{proof}

\begin{prop}\label{prop dim}
Let $H$ be a Hopf algebra. Then $$\projdim \kk_\vps=\projdim {}_\vps\kk=\rgldim(H)=\lgldim(H)=\Hdim(H).$$
\end{prop}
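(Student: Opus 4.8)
The plan is to prove the chain of equalities
$$\projdim \kk_\vps=\projdim {}_\vps\kk=\rgldim(H)=\lgldim(H)=\Hdim(H)$$
by establishing the inequalities that link each quantity, using the Hopf-algebraic machinery already set up above. First I would recall the standard fact (for any augmented algebra, in particular a Hopf algebra with its counit) that $\projdim {}_\vps\kk \le \lgldim(H)$ trivially, and conversely $\lgldim(H) \le \projdim {}_\vps\kk$ by a tensoring argument: if $P_\bullet \to {}_\vps\kk$ is a finite projective resolution of length $n$, then for any left $H$-module $M$, applying $-\ot M$ (with diagonal $H$-action, using the comultiplication) to $P_\bullet$ and using that $H\ot M$ is free whenever $M$ is considered with trivial-twisted structure — more precisely, $P\ot M$ is projective for $P$ projective since $H\ot M \cong H\ot M^{\mathrm{triv}}$ as $H$-modules — yields a projective resolution of ${}_\vps\kk\ot M\cong M$ of length $\le n$. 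Hence $\projdim {}_\vps\kk=\lgldim(H)$, and symmetrically $\projdim \kk_\vps=\rgldim(H)$.

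Next I would tie these to the Hochschild dimension. By definition $\Hdim(H)=\projdim_{H^e}H$. One direction, $\projdim {}_\vps\kk \le \Hdim(H)$, follows because $L(-)=\Hom_{H^e}({}_{H^e}H^e_H,-)$ is exact and sends projective $H^e$-modules to projective $H$-modules (shown inside the proof of Lemma \ref{lem rlk}, where ${}_{H^e}H^e_H$ is free as a right $H$-module), and $L(H)={}_\vps\kk$; applying $L$ to a finite projective $H^e$-resolution of $H$ gives a projective $H$-resolution of ${}_\vps\kk$ of the same length. For the reverse inequality $\Hdim(H)\le \projdim {}_\vps\kk$, I would invoke the classical observation that a projective resolution of ${}_\vps\kk$ over $H$ can be promoted to a projective resolution of $H$ over $H^e$: if $P_\bullet\to{}_\vps\kk$ is projective of length $n$ over $H$, then $H\ot_\kk P_\bullet$ (with outer left multiplication on $H$ and the left adjoint/diagonal action giving the right $H$-factor, via $(\id\ot S)\circ\bdt$) is a complex of projective $H^e$-modules resolving $H\ot_\kk{}_\vps\kk\cong H$ as an $H$-bimodule. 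So $\Hdim(H)=\projdim {}_\vps\kk$, and likewise $\Hdim(H)=\projdim\kk_\vps$ using the right-handed version $R$.

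Assembling: $\projdim{}_\vps\kk=\lgldim(H)$, $\projdim\kk_\vps=\rgldim(H)$, and both $\projdim{}_\vps\kk$ and $\projdim\kk_\vps$ equal $\Hdim(H)$, which forces all five numbers to coincide. Throughout I will allow the value $+\infty$ uniformly, so no finiteness hypothesis is needed (matching the remark in \S2.1 that we do not assume Noetherianness or bijectivity of the antipode).

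The main obstacle I anticipate is bookkeeping the module structures carefully: the functors $L$ and $R$ twist the $H$-actions by the antipode (and $S^2$ appears in Lemma \ref{lem free}), so to conclude that "$H\ot M$ is free over $H$ with the diagonal action" one must exhibit the explicit $H$-linear isomorphism $H\ot M \xrightarrow{\sim} H\ot M^{\mathrm{triv}}$, $h\ot m\mapsto h_1\ot S(h_2)m$ (and its inverse $h\ot m\mapsto h_1\ot h_2 m$), and similarly track which side the adjoint action lands on when building the $H^e$-resolution of $H$ from an $H$-resolution of $\kk$. These are exactly the kind of identities already verified in Lemmas \ref{lem free} and \ref{lem rlk}, so the argument is routine once those are in hand; the only real subtlety is not conflating left/right versions and ensuring the bijectivity of antipode is genuinely not used — which it is not, since every isomorphism invoked has an explicit two-sided inverse built from $\bdt$ and $S$ (with $S^2$, never $S^{-1}$).
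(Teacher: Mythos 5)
Your two outer equalities $\projdim{}_\vps\kk=\lgldim(H)$ and $\projdim\kk_\vps=\rgldim(H)$ (the tensor-with-$M$ argument, which is what the paper cites from \cite{ll}) are fine, and so is your argument that $\Hdim(H)\leqslant\projdim{}_\vps\kk$: inducing along $D=(\id\ot S)\circ\bdt$, i.e.\ applying ${}_{H^e}H^e_H\ot_H-$, is exact and preserves projectives because ${}_{H^e}H^e_H$ is free as a right $H$-module (the computation inside Lemma \ref{lem rlk}), and it sends ${}_\vps\kk$ to the regular bimodule $H$. The genuine gap is in the step you use for the reverse inequality $\projdim{}_\vps\kk\leqslant\Hdim(H)$: the claim $L(H)={}_\vps\kk$ is false. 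For the regular bimodule $H$, $L(H)=\Hom_{H^e}({}_{H^e}H^e_H,H)$ is $H$ equipped with the left adjoint action $x\cdot h=x_1hS(x_2)$; it has dimension $\dim H$, not $1$ (for a group algebra it is the conjugation representation). Hence applying $L$ to a projective $H^e$-resolution of $H$ resolves the adjoint module, not ${}_\vps\kk$, and gives no bound on $\projdim{}_\vps\kk$. Without that inequality you only obtain $\Hdim(H)\leqslant\lgldim(H)$ and $\Hdim(H)\leqslant\rgldim(H)$, which does not close the chain of equalities.

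The repair is standard and uses only tools already at hand: apply $-\ot_H{}_\vps\kk$ (not $L$) to a projective $H^e$-resolution $P_\bullet\ra H$. Every $P_i$, and $H$ itself, is projective as a right $H$-module (since $H^e$ with the outer structure is free as a right $H$-module), so the resolution splits as a complex of right $H$-modules and remains exact after tensoring; moreover each $P_i\ot_H{}_\vps\kk$ is a projective left $H$-module and $H\ot_H{}_\vps\kk\cong{}_\vps\kk$. This yields $\projdim{}_\vps\kk\leqslant\Hdim(H)$ and is exactly the argument appearing in the proof of Proposition \ref{prop hom}, (i)$\Rightarrow$(ii). Alternatively one may simply quote the general fact $\lgldim(A)\leqslant\Hdim(A)$ for any algebra over a field (Cartan--Eilenberg IX.7.6), which is what the paper does; the paper then obtains $\Hdim(H)\leqslant\lgldim(H)$ and $\Hdim(H)\leqslant\rgldim(H)$ from the Ext-isomorphisms of Lemma \ref{lem rlk} --- note that this is the direction in which the functor $L$ is actually useful, namely $\Ext^i_{H^e}(H,M)\cong\Ext^i_H({}_\vps\kk,L(M))$ vanishes for $i>\lgldim(H)$ and all bimodules $M$; your induction argument is an equivalent substitute for that half, but it cannot replace the half where you misapplied $L$.
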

\proof That $\projdim \kk_\vps=\rgldim(H)$ and $\projdim {}_\vps\kk=\lgldim(H)$ follows from \cite[Section 2.4]{ll}. We know from \cite[IX.7.6]{ce} that $\rgldim(H)$ and $\lgldim(H)$ are bounded by $\Hdim(H)$. Let $M$ be any $H$-bimodule. The isomorphism $\Ext^i_{H^e}(H,M)\cong \Ext^i_H({}_\vps\kk,L(M))$, $i\le 0$ shows that $\Hdim(H)\se \lgldim (H)$. Similarly, the isomorphism $\Ext^i_{H^e}(H,M)\cong \Ext^i_H(\kk_\vps,R(M))$, $i\le 0$ shows that $\Hdim(H)\se \rgldim(H)$. So we obtain that $\rgldim(H)=\lgldim(H)=\Hdim(H)$. In conclusion, we obtain that
$$\projdim \kk_\vps=\projdim {}_\vps\kk=\rgldim(H)=\lgldim(H)=\Hdim(H).$$\qed

Therefore, for any Hopf algebra $H$, there is no need to distinguish its left global dimension and right global dimension. In the following, we denote the global dimension of $H$ by $\gldim(H)$.

Next we want to show that to see whether  a Hopf algebra $H$ is homologically smooth it is enough to investigate the projective resolution of  the trivial module.

\begin{prop}\label{prop hom}
Let $H$ be a Hopf algebra. The following assertions are equivalent:
\begin{enumerate}
\item[(i)] The algebra $H$ is homologically smooth.
\item[(ii)] The left trivial module ${}_\vps\kk$ admits a  bounded projective resolution with each term finitely generated.
\item[(iii)] The right trivial module $\kk_\vps$ admits a  bounded projective resolution with each term finitely generated.
\end{enumerate}
\end{prop}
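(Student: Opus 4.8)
The plan is to show the chain of implications (i) $\Rightarrow$ (ii) $\Rightarrow$ (i), together with the analogous equivalence (i) $\Leftrightarrow$ (iii); the two trivial modules are handled symmetrically, so I will concentrate on (i) $\Leftrightarrow$ (ii). For (i) $\Rightarrow$ (ii), I would start from a bounded resolution $P_\bullet \to H$ by finitely generated projective $H^e$-modules and apply the left adjoint functor $L:\Mod\text{-}H^e \to \Mod\text{-}H$. By Lemma \ref{lem rlk} (and the computation in its proof) $L$ is exact and is given by $\Hom_{H^e}({}_{H^e}H^e_H,-)$, with ${}_{H^e}H^e_H$ free as a right $H$-module; hence $L$ sends finitely generated projective $H^e$-modules to finitely generated projective $H$-modules. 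Since $L(H) \cong {}_\vps\kk$ (the left $H$-action $x \to h = x_1 h S(x_2)$ on $H$ viewed via the counit degenerates to the trivial module — this is exactly the $i=0$ instance of Lemma \ref{lem rlk}(i)), applying the exact functor $L$ to $P_\bullet \to H$ yields a bounded finitely generated projective resolution of ${}_\vps\kk$, giving (ii).

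For the converse (ii) $\Rightarrow$ (i), I would start with a bounded resolution $Q_\bullet \to {}_\vps\kk$ by finitely generated projective left $H$-modules and \emph{induce it up} to $H^e$ along the algebra map $(\id\ot S)\circ\bdt : H \to H^e$, i.e. apply $H^e \ot_H -$. The key point is that $H^e$ is free (hence flat) as a right $H$-module via this map — this is precisely the isomorphism ${}_{H^e}H^e_H \cong H_* \ot H$ recorded in the proof of Lemma \ref{lem rlk}. Flatness of $H^e$ over $H$ makes $H^e \ot_H -$ exact, so it carries $Q_\bullet \to {}_\vps\kk$ to a bounded complex of $H^e$-modules which is a resolution of $H^e \ot_H {}_\vps\kk$; and $H^e \ot_H {}_\vps\kk \cong H$ as $H^e$-modules (the standard identification, dual to $L(H) \cong {}_\vps\kk$, using the antipode). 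Each term $H^e \ot_H Q_i$ is finitely generated projective over $H^e$ because $Q_i$ is finitely generated projective over $H$ and induction preserves both properties (it sends $H$ to $H^e$, and is additive). This produces the required bounded finitely generated projective $H^e$-resolution of $H$, establishing homological smoothness. The implication (i) $\Leftrightarrow$ (iii) is obtained the same way using the right adjoint functor $R$ and Lemma \ref{lem rlk}(ii), or simply by applying the already-proved equivalence to $H^{op}$.

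I do not anticipate a serious obstacle here: the substance is entirely contained in the already-established adjunctions and the freeness statements ${}_{H^e}H^e_H \cong H_* \ot H$ and $L(H^e) \cong {}_*H\ot H$ from Lemmas \ref{lem free} and \ref{lem rlk}. The one point that requires a little care is checking that induction and restriction along the Hopf-algebraic maps $H \to H^e$ genuinely preserve ``finitely generated projective'' in both directions — for restriction this is the freeness of $H^e$ as a one-sided $H$-module, and for induction it is the fact that $H^e \ot_H (-)$ is additive and sends $H$ to $H^e$; both are immediate from the isomorphisms cited. A second small bookkeeping matter is making sure the module-side identifications $L(H)\cong {}_\vps\kk$ and $H^e\ot_H{}_\vps\kk \cong H$ are tracked as isomorphisms of the correct (one-sided resp. bimodule) structures, which follows from the $i=0$ case of Lemma \ref{lem rlk} and its proof.
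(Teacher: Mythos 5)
Your implication (i) $\Rightarrow$ (ii) contains a genuine error. The restriction functor $L$ does not do what you claim. First, $L(H)$ is $H$ equipped with the left adjoint action $x\ra h=x_1hS(x_2)$; it has dimension $\dim H$ and is not ${}_\vps\kk$. The $i=0$ case of Lemma \ref{lem rlk}(i) says $\Hom_{H^e}(H,M)\cong\Hom_H({}_\vps\kk,L(M))$, which is an adjunction-type statement about the \emph{first} variable, not the assertion $L(H)\cong{}_\vps\kk$. Second, $L$ does not preserve finite generation: by Lemma \ref{lem free}, $L(H^e)\cong{}_*H\ot H$, which is free over $H$ of rank $\dim_\kk H$, hence not finitely generated whenever $H$ is infinite dimensional. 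So applying $L$ to a bounded finitely generated projective $H^e$-resolution of $H$ produces a resolution of the adjoint module by generally non-finitely-generated projectives, which is not statement (ii). The correct (and easy) repair is the one the paper uses: apply $-\ot_H{}_\vps\kk$ to the $H^e$-resolution $P_\bullet\ra H$, using the right $H$-module structure coming from the bimodule structure. Since $H^e$ is free as a right $H$-module via its second factor, each $P_i$ and $H$ itself are projective (hence flat) right $H$-modules, so the bounded exact complex stays exact after tensoring; moreover $H^e\ot_H{}_\vps\kk\cong H$ is finitely generated free over $H$ and $H\ot_H{}_\vps\kk\cong{}_\vps\kk$, which gives (ii).

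Your implication (ii) $\Rightarrow$ (i), by contrast, is correct and takes a genuinely different route from the paper. You induce a bounded finitely generated projective resolution of ${}_\vps\kk$ up along $a\mapsto a_1\ot S(a_2)$: the isomorphism ${}_{H^e}H^e_H\cong H_*\ot H$ shows $H^e$ is free as a right $H$-module, so $H^e\ot_H-$ is exact, it preserves finitely generated projectives, and $H^e\ot_H{}_\vps\kk\cong H$ as $H^e$-modules; this yields the required bimodule resolution directly, with no bijectivity assumption on $S$. The paper instead argues indirectly in the style of \cite[Lemma 5.2]{bz}: it deduces $\Hdim(H)<\infty$ from Proposition \ref{prop dim}, shows via Lemma \ref{lem rlk} that $\Ext^i_{H^e}(H,-)$ commutes with inductive direct limits (because ${}_\vps\kk$ has a bounded finitely generated projective resolution and $L$ is a restriction functor), and then concludes that $H$ admits a bounded finitely generated projective $H^e$-resolution. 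Your induction argument is more direct and would be a legitimate substitute for that half; once you replace the restriction functor by $-\ot_H{}_\vps\kk$ in the other half (and treat (iii) symmetrically, as you indicate), the whole proof goes through.
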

\proof We only need to show that (i) and (ii) are equivalent. (i)$\Leftrightarrow$(iii) can be proved symmetrically. The proof is quite similar to the proof of Lemma 5.2 in \cite{bz}.

(i)$\Rightarrow$(ii) This  is obvious. Suppose that  $H$  is homologically smooth. That is, $H$ has a resolution
$$0\ra P_n\ra P_{n-1}\ra \cdots \ra P_1\ra P_0\ra H\ra 0$$
such that  each term is a  finitely generated projective $H^e$-module. Then
$$0\ra P_n\ot_H {}_\vps\kk\ra P_{n-1}\ot_H {}_\vps\kk\ra \cdots \ra P_1\ot_H {}_\vps\kk\ra P_0\ot_H {}_\vps\kk\ra  {}_\vps\kk\ra 0$$
is  a  bounded projective resolution  of ${}_\vps\kk$ with each term finitely generated as left $H$-module.

(ii)$\Rightarrow$(i) By the same proof of Lemma 5.2 (c) in \cite{bz}, we can show that for any $H$-bimodule $M$, if $\projdim_{H^e}M<\infty$ and $\Ext^i_{H^e}(M,-)$ commutes with inductive direct limits for all $i$, then $M$ has a bounded resolution of finitely generated projective $H^e$-modules. So to prove that $H$ is homologically smooth,  we only need to show that $H$ has finite Hochschild dimension and $\Ext^i_{H^e}(H,-)$ commutes with inductive  direct limits for all $i$.

Since ${}_\vps\kk$ admits a  bounded projective resolution with each term finitely generated, the projective dimension of ${}_\vps\kk$ is finite.  Proposition \ref{prop dim} shows that the Hochschild dimension equals projective dimension of ${}_\vps\kk$. Therefore, $H$ has finite Hochschild dimension.

Let $M$ be an $H$-bimodule. By Lemma \ref{lem rlk},
$\Ext^i_{H^e}(H,M)=\Ext_H^i({}_\vps\kk,L(M))$ for all $i$. The trivial module ${}_\vps\kk$ admits a  bounded projective resolution with each term finitely generated. Hence, $\Ext_H^i({}_\vps\kk,-)$ commutes with inductive  direct limits. It is mentioned in the proof of Lemma \ref{lem rlk} that the functor $L:\Mod$-$H^e\ra \Mod$-$H$ is isomorphic to the functor $\Hom_{H^e}({}_{H^e}H^e_H,-)$. We can view $H$ as a subalgebra of $H^e$ via the right $H$-module structure since it is free, that is $a\mapsto a_1\otimes S(a_2)$ for any $a\in H$. Hence $L(-)$ is just a restriction functor, and it commutes with inductive direct limits as well. Therefore, $\Ext^i_{H^e}(H,-)$ commutes with inductive direct limits for all $i$. Now we conclude  that  $H$ is homologically smooth. \qed

\subsection{Artin-Schelter Gorenstein Hopf algebras} We first recall the definition of an Artin-Schelter (AS) Gorenstein algebra.
\begin{defn}(cf. \cite[defn. 1.2]{bz})\label{defn as} Let $H$ be a Hopf algebra.
\begin{enumerate}
\item[(i)] The Hopf algebra $H$ is said to be \it{left
AS-Gorenstein}, if
\begin{enumerate}
\item[(a)] $\injdim  {_HH}=d<\infty$,
\item[(b)] $\Ext_H^i({_\vps\kk},{H})=0$ for $i\neq d$ and $\Ext_H^d({_\vps\kk},{H})=\kk$.
\end{enumerate}
\item[(ii)] The Hopf algebra $H$ is said to be \it{right
AS-Gorenstein}, if
\begin{enumerate}
\item[(c)] $\injdim  {H_H}=d<\infty$,
\item[(d)] $\Ext_{H^{op}}^i({\kk_\vps},{H})=0$ for $i\neq d$ and $\Ext_{H^{op}}^d({\kk_\vps},{H})=\kk$.
\end{enumerate}
\item[(iii)] If $H$ is both left and right AS-Gorenstein (relative to the same augmentation map $\vps$), then $H$ is called \textit{AS-Gorenstein}.
\item[(iv)] If, in addition, the global dimension of $H$ is finite, then $H$ is called  \it{AS-regular}.
\end{enumerate}
\end{defn}

\begin{rem}
In above definitions, we do not require the Hopf algebra $H$ to be Noetherian. For AS-regularity,  the right global dimension always equals the left global dimension by Proposition \ref{prop dim}. Moreover, when $H$ is AS-Gorenstein, the right injective dimension always equals the left injective dimension, which are both given by the integer $d$ such that $\Ext_{H^e}^d(H,H^e)\neq 0$ by Proposition \ref{prop lrAS}.
\end{rem}

Homological integrals for an AS-Gorenstein Hopf algebra introduced in \cite{lwz} is a generalization of integrals for finite dimensional Hopf algebras \cite{sw}. The concept was further extended to any AS-Gorenstein algebra in \cite{bz}.

Let $A$ be a left AS-Gorenstein algebra of injective dimension $d$. One sees that $\Ext^d_A({}_\vps\kk, A)$ is a one dimensional right $A$-module. Any nonzero element in $\Ext^d_A({}_\vps\kk, A)$ is called a \emph{left  homological integral} of $A$. Usually, $\Ext^d_A({}_\vps\kk, A)$ is denoted  by $\int^l_A$. Similarly, if $A$ is a right AS-Gorenstein algebra of injective dimenson $d$, any nonzero element in $\Ext^d_{A^{op}}(\kk_\vps, A )$ is called a \emph{right homological integral}. And $\Ext^d_{A^{op}}(\kk_\vps, A )$ is denoted by $\int^r_A$. Abusing the language slightly, $\int^l_A$ (resp. $\int^r_A$) is also called the left (resp. right) homological integral.

A Noetherian Hopf algebra $H$ is AS-regular in the sense of \cite[Definition 1.2]{bz} if and only if $H$ is twisted CY (\cite[Lemma 1.3]{rrz}). If $H$ is not necessarily Noetherian, we have the following result.

\begin{prop}\label{prop ascy}
Let $H$ be a Hopf algebra with bijective antipode such that it is homologically smooth. Then the followings are equivalent.
\begin{enumerate}
\item[(i)] $H$ is a twisted CY algebra of dimension $d$.
\item[(ii)] There is an integer $d$ such that $\Ext_{H}^i({}_\vps\kk,H)=0$ for $i\neq d$ and $\dim\Ext_{H}^d({}_\vps\kk,H)=1$.
\item[(iii)] There is an integer $d$ such that $\Ext_{H^{op}}^i(\kk_\vps,H)=0$ for  $i\neq d$ and $dim\Ext_{H^{op}}^d(\kk_\vps,H)=1.$
\item[(iv)] $\Ext_{H}^i({}_\vps\kk,H)$ and $\Ext_{H^{op}}^i(\kk_\vps,H)$ are finite dimensional for $i\le 0$ and there is an integer $d$ such that $\dim \Ext^i_H({}_\vps\kk,H)=\dim \Ext^i_{H^{op}}(\kk_\vps,H)=0$ for $i>d$, and $\dim \Ext^d_H({}_\vps\kk,H)\neq 0$ or $\dim \Ext^d_{H^{op}}({}\kk_\vps,H)\neq 0$.
\end{enumerate}
In these cases, we have $\gldim(H)=\injdim  {H_H}=\injdim  {H_H}=d$.
\end{prop}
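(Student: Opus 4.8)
The plan is to derive $(\mathrm i)\Leftrightarrow(\mathrm{ii})\Leftrightarrow(\mathrm{iii})$ directly from Proposition~\ref{prop lrAS}, observe that $(\mathrm{ii}),(\mathrm{iii})\Rightarrow(\mathrm{iv})$ is formal, and then grind out $(\mathrm{iv})\Rightarrow(\mathrm i)$. Two preliminary remarks support everything. First, since $H$ is homologically smooth, Propositions~\ref{prop hom} and~\ref{prop dim} give $g:=\gldim(H)=\projdim_H{}_\vps\kk<\infty$, and moreover $\Ext_H^g({}_\vps\kk,H)\neq0$: pick a length-$g$ resolution $0\to Q_g\to\cdots\to Q_0\to{}_\vps\kk\to0$ by finitely generated projectives with $Q_g\neq0$; were $\Hom_H(Q_{g-1},H)\to\Hom_H(Q_g,H)$ surjective it would split (its target being projective), and dualizing back along $\Hom_{H^{op}}(-,H)$ (an equivalence on finitely generated projectives) would peel off a contractible summand from $Q_\bullet$, shortening the resolution and contradicting $\projdim_H{}_\vps\kk=g$. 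Second, Proposition~\ref{prop lrAS} provides isomorphisms of $H^e$-modules $\Ext_{H^e}^i(H,H^e)\cong\Ext_H^i({}_\vps\kk,H)\ot H\cong\Ext_{H^{op}}^i(\kk_\vps,H)\ot H$, with the structures on the right coming from \eqref{HHEr} and \eqref{HHEl} and with the tensor factor $H$ free over a copy of $H$ sitting inside $H^e$; hence $V\ot H=0\Leftrightarrow V=0$ for a $\kk$-space $V$, and freeness of $H$ converts dimension counts for $\Ext_{H^e}^i(H,H^e)$ into dimension counts for $\Ext_H^i({}_\vps\kk,H)$.

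For $(\mathrm i)\Rightarrow(\mathrm{ii}),(\mathrm{iii})$: feeding $\Ext_{H^e}^i(H,H^e)=0$ $(i\neq d)$ and $\Ext_{H^e}^d(H,H^e)\cong H^\mu$ into the isomorphisms, the remarks above force $\Ext_H^i({}_\vps\kk,H)=0=\Ext_{H^{op}}^i(\kk_\vps,H)$ for $i\neq d$, while restricting $\Ext_H^d({}_\vps\kk,H)\ot H\cong H^\mu$ to the free right $H$-action on the tensor factor and applying $-\ot_H\kk_\vps$ forces $\dim_\kk\Ext_H^d({}_\vps\kk,H)=1$, symmetrically for $H^{op}$; and $\Ext_H^g({}_\vps\kk,H)\neq0$ then pins $d=g$. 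For $(\mathrm{ii})\Rightarrow(\mathrm i)$, and symmetrically $(\mathrm{iii})\Rightarrow(\mathrm i)$ via \eqref{HHEl}: the isomorphisms give $\Ext_{H^e}^i(H,H^e)=0$ for $i\neq d$, and since $\Ext_H^d({}_\vps\kk,H)$ is a one-dimensional right $H$-module, i.e.\ $\kk_\pi$ for an algebra map $\pi\colon H\to\kk$, a direct computation with \eqref{HHEr} identifies $\kk_\pi\ot H$ (after the usual conversion between left and right $H^e$-modules) with $H^\mu$, where $\mu=S^2\circ\Xi_\pi$ and $\Xi_\pi(a)=\pi(a_1)a_2$; here $S^2$ is bijective because $S$ is, and $\Xi_\pi$ is bijective with inverse $\Xi_{\pi\circ S}$, so $\mu$ is an automorphism. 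With homological smoothness this is Definition~\ref{defn tcy}: $H$ is twisted CY of dimension $d$ with Nakayama automorphism $\mu$. Finally $(\mathrm{ii}),(\mathrm{iii})\Rightarrow(\mathrm{iv})$ is immediate, since the exact vanishing and one-dimensionality of $(\mathrm{ii})/(\mathrm{iii})$ are stronger than the conditions of $(\mathrm{iv})$, with the same $d$.

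The crux is $(\mathrm{iv})\Rightarrow(\mathrm i)$. First pin $d$: the isomorphisms make $\Ext_H^d({}_\vps\kk,H)$ and $\Ext_{H^{op}}^d(\kk_\vps,H)$ vanish simultaneously, so the ``or'' in $(\mathrm{iv})$ forces both nonzero, giving $d\se g$; with $\Ext_H^g({}_\vps\kk,H)\neq0$ and the vanishing for $i>d$ from $(\mathrm{iv})$, we get $d=g$. It then suffices to prove $\Ext_H^i({}_\vps\kk,H)=0$ for $0\se i<g$ and $\dim_\kk\Ext_H^g({}_\vps\kk,H)=1$, for that is $(\mathrm{ii})$. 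I would get this from the perfect $H^e$-complex $C^\bullet:=\mathbf R\Hom_{H^e}(H,H^e)$, whose cohomology $H^i(C^\bullet)=\Ext_H^i({}_\vps\kk,H)\ot H$ is, by $(\mathrm{iv})$, finite-dimensional over $\kk$ in each degree, concentrated in $[0,g]$, and — being a finite-dimensional $H$-module induced along the flat algebra map $a\mapsto a_1\ot S(a_2)\colon H\hookrightarrow H^e$ — of $H^e$-projective dimension $\se g$. Combining this with the biduality $\mathbf R\Hom_{H^e}(C^\bullet,H^e)\cong H$ and the second hyper-$\Ext$ spectral sequence
\[
E_2^{p,q}=\Ext_{H^e}^p\!\bigl(H^{-q}(C^\bullet),H^e\bigr)\ \Longrightarrow\ H^{p+q}\!\bigl(\mathbf R\Hom_{H^e}(C^\bullet,H^e)\bigr)=H^{p+q}(H),
\]
whose abutment is concentrated in total degree $0$, one confines $E_2$ to the box $0\se p\se g$, $-g\se q\se0$; chasing corner and edge terms (which can neither receive nor emit a differential) forces the cohomology of $C^\bullet$ into the single degree $g$, and matching the surviving anti-diagonal term with $H$ reads off $\dim_\kk\Ext_H^g({}_\vps\kk,H)=1$.

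I expect this spectral-sequence step to be the genuine obstacle: the inputs ``$H^i(C^\bullet)$ has finite $H^e$-projective dimension'' and ``the surviving term is one-dimensional'' are precisely where the non-Noetherian subtleties that hypothesis $(\mathrm{iv})$ is designed to absorb must be confronted, whereas the rest is routine bookkeeping with Proposition~\ref{prop lrAS} and winding automorphisms. The concluding equalities are then automatic: in every case $d=g=\gldim(H)$, and since $\gldim(H)=d<\infty$ one has $\injdim{}_HH\se d$ and $\injdim H_H\se d$ for free, while $\Ext_H^d({}_\vps\kk,H)\neq0\neq\Ext_{H^{op}}^d(\kk_\vps,H)$ forces equality; hence $\gldim(H)=\injdim{}_HH=\injdim H_H=d$, and in particular $H$ is AS-regular in the sense of Definition~\ref{defn as}.
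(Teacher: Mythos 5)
Your handling of (i)$\Leftrightarrow$(ii)$\Leftrightarrow$(iii) and of (ii),(iii)$\Rightarrow$(iv) is essentially sound and close to the paper's (the paper cites \cite{yvz} for (i)$\Rightarrow$(ii),(iii) and argues (ii)$\Rightarrow$(i) exactly as you do, with $\mu(h)=\xi(h_1)S^2(h_2)$). The genuine gap is at the step you yourself flag as the crux, (iv)$\Rightarrow$(i). The claim that confining $E_2^{p,q}=\Ext^p_{H^e}(H^{-q}(C^\bullet),H^e)$ to the box $0\se p\se g$, $-g\se q\se 0$ and ``chasing corner and edge terms'' forces the cohomology of $C^\bullet$ into the single degree $g$ is not a formal consequence of the spectral sequence. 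Vanishing of the abutment away from total degree $0$ only constrains $E_\infty$, and, more seriously, even knowing that every $\Ext^p_{H^e}(H^m(C^\bullet),H^e)$ vanishes would not force $H^m(C^\bullet)=0$: the modules $H^m(C^\bullet)\cong \Ext^m_H({}_\vps\kk,H)\ot_H H^e$ are induced from finite-dimensional $H$-modules which, over a possibly non-Noetherian $H$, need not be perfect, and a nonzero module of finite projective dimension can have $\mathbf{R}\Hom(-,R)=0$ (for instance $R=\prod_{\mathbb N}\kk$ and $M=R/\bigoplus_{\mathbb N}\kk$). So the bare chase can neither rule out nonzero cohomology in degrees below $g$ nor ``read off'' one-dimensionality in degree $g$; the two inputs you defer are exactly the content of the step, not bookkeeping.

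What is missing is the engine of the paper's argument: the dimension formula $\dim\Ext^i_{H^{op}}(M,H)=\dim M\cdot\dim\Ext^i_{H^{op}}(\kk_\vps,H)$ (and its left-sided analogue) for finite-dimensional $M$, coming from the tensor identity $M\ot H\cong H^{\dim M}$, i.e. \cite[Lemma 1.11]{brg}. The paper runs the one-sided Ischebeck spectral sequence $\Ext^p_{H^{op}}(\Ext^{-q}_H({}_\vps\kk,H),H)\Rightarrow \Tor^H_{-p-q}(H,{}_\vps\kk)$ for the perfect module ${}_\vps\kk$ (perfect by Proposition \ref{prop hom}), and it is this formula, combined with the finite-dimensionality hypothesis in (iv) and the nonvanishing of $\Ext$ in the top degree $\projdim\kk$, that makes the relevant corner entries provably nonzero, forces them onto the antidiagonal of total degree $0$, and finally yields $\dim\Ext^d_H({}_\vps\kk,H)=\dim\Ext^d_{H^{op}}(\kk_\vps,H)=1$ from $E_\infty^{d,-d}\cong\kk$. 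Your $H^e$-level double-dual spectral sequence can be repaired with the same ingredient (via the change-of-rings isomorphism $\Ext^p_{H^e}(V\ot_H H^e,H^e)\cong\Ext^p_{H^{op}}(V,H^e)$ and the fact that $\Ext^p_{H^{op}}(\kk_\vps,-)$ commutes with direct sums because $\kk_\vps$ admits a finitely generated projective resolution), but as written the decisive nonvanishing and one-dimensionality statements are asserted rather than proved, so the proposal does not yet establish (iv)$\Rightarrow$(i).
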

\proof (i)$\Rightarrow$(ii),(iii) This proof can be found for example in \cite[Lemma 2.15]{yvz}.

(ii)$\Rightarrow$ (i) By Proposition \ref{prop lrAS}, $\Ext_{H^e}^i(H,H^e)\cong\Ext_H^i({}_\vps\kk,H)\ot H$ for all $i\ge 1$ as $H^e$-modules.
Since $\Ext_H^d({}_\vps\kk, H)$ is  a one dimensional right $H$-module, we simply write it as $\kk_\xi$, for some algebra homomorphism $\xi:H\ra\kk$.  Therefore, $\Ext_{H^e}^i(H,H^e)=0$ for $i\neq d$ and $\Ext_{H^e}^d(H,H^e)\cong \kk_\xi\ot H\overset{(a)}\cong H^\mu$, where $\mu$ is defined by $\mu(h)=\xi(h_1)S^{2}(h_2)$ for any $h\in H$. The isomorphism (a) holds because the $H^e$-module structure on $\kk_\xi\ot H$ is induced by the equation (\ref{HHEr}) according to Proposition \ref{prop lrAS}. Moreover, it is easy to check that $\mu$ is an algebra automorphism of $H$ with inverse given by $\mu^{-1}(h)=\xi(S(h_1))S^{-2}(h_2)$ for any $h\in H$.

(iii)$\Rightarrow$(i) The proof is similar to that of (ii)$\Rightarrow$ (i).

(ii), (iii)$\Rightarrow$(iv)  This is obvious.

(iv)$\Rightarrow$(ii), (iii) The proof of \cite[Lemma 3.2]{bz} works generally for this case. Suppose $\dim \Ext^d_H({}_\vps\kk,H)\neq 0$, and it is similar for $\dim \Ext^d_{H^{op}}({}\kk_\vps,H)\neq 0$. Since $H$ is homologically smooth, by Proposition \ref{prop hom} and \cite[Lemma 1.11]{brg}, we can apply the Ischebeck's spectral sequence
\[
\Ext^p_{H^{op}}(\Ext_H^{-q}({}_\vps\kk,H),H)\Longrightarrow \Tor^H_{-p-q}(H,{}_\vps\kk).
\]
to obtain $\dim \Ext^i_{H^{op}}(\kk_\vps,H)=0$ for $i\neq d$. From the proof of \cite[Lemma 1.11]{brg}, $\dim\Ext_H^d(M,H)=\dim M\cdot \dim\Ext_H^d({}_\vps\kk,H)$  for any finite dimensional left $H$-module $M$. Thus by the finite dimensional assumption,
$$\dim\Ext_H^d(\Ext_{H^{op}}^d(\kk_\vps,H),H)=\dim \Ext_{H^{op}}^d(\kk_\vps,H)\cdot \dim\Ext_H^d({}_\vps\kk,H).$$
Again by the Ischebeck's spectral sequence,
$\Ext_H^d(\Ext_{H^{op}}^d(\kk_\vps,H),H)\cong \kk$. Hence,
$$\dim \Ext^d_H({}_\vps\kk,H)=\dim \Ext^d_{H^{op}}(\kk_\vps,H)=1.$$
Now (ii) and (iii) are proved.

Finally, we can apply the same proof of \cite[Proposition 2.2]{bt} to show that for a twisted CY Hopf algebra $H$ of dimension $d$, we have $\Hdim(H)=d$. Hence $\gldim(H)=d$ by Proposition \ref{prop dim}. The equality of the injective dimension of $H$ is easy to see since it is always bounded by $\gldim(H)=d$ and we have $\dim \Ext^d_H({}_\vps\kk,H)\neq 0$ or $\dim \Ext^d_{H^{op}}({}\kk_\vps,H)\neq 0$.

\qed

\begin{cor}
Let $H$ be a Hopf algebra with bijective antipode. Then the following are equivalent
\begin{itemize}
\item [(i)] $H$ is twisted CY.
\item [(ii)] $H$ is left AS-Gorenstein and the left trivial module ${}_\vps\kk$ admits a bounded projective resolution with each term finitely generated.
\item [(iii)] $H$ is right AS-Gorenstein and the right trivial module $\kk_\vps$ admits a bounded projective resolution with each term finitely generated.
\end{itemize}
\end{cor}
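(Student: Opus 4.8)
The plan is to read the corollary off directly from Propositions \ref{prop hom} and \ref{prop ascy}, of which it is essentially a recombination. It suffices to prove the equivalence (i) $\Leftrightarrow$ (ii): the equivalence (i) $\Leftrightarrow$ (iii) is obtained by the identical argument, replacing ${}_\vps\kk$ by $\kk_\vps$, the left trivial module by the right one, ``left AS-Gorenstein'' by ``right AS-Gorenstein'', part (ii) of Proposition \ref{prop hom} by part (iii), and part (ii) of Proposition \ref{prop ascy} by part (iii). (Alternatively one may pass to $H^{op}$, whose antipode is the bijective map $S^{-1}$, so that the bijective-antipode hypothesis is inherited.)

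For (i) $\Rightarrow$ (ii): assume $H$ is twisted CY of dimension $d$. Then $H$ is homologically smooth, so by Proposition \ref{prop hom} the module ${}_\vps\kk$ admits a bounded projective resolution with each term finitely generated. Since $H$ has bijective antipode and is homologically smooth, Proposition \ref{prop ascy} applies: its implication (i) $\Rightarrow$ (ii) gives $\Ext_H^i({}_\vps\kk, H) = 0$ for $i \neq d$ and $\dim \Ext_H^d({}_\vps\kk, H) = 1$, while its final assertion gives $\injdim {}_HH = d < \infty$. These are exactly conditions (a) and (b) of Definition \ref{defn as}(i), so $H$ is left AS-Gorenstein, and (ii) holds. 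For (ii) $\Rightarrow$ (i): assume $H$ is left AS-Gorenstein of injective dimension $d$ and that ${}_\vps\kk$ admits a bounded projective resolution with each term finitely generated. By Proposition \ref{prop hom}, $H$ is homologically smooth. The AS-Gorenstein condition Definition \ref{defn as}(i)(b) says precisely that $\Ext_H^i({}_\vps\kk, H) = 0$ for $i \neq d$ and $\Ext_H^d({}_\vps\kk, H)$ is one-dimensional, which is condition (ii) of Proposition \ref{prop ascy}; since $H$ has bijective antipode and is homologically smooth, that proposition then yields that $H$ is twisted CY (of dimension $d$).

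I do not expect a genuine obstacle here; the content is essentially bookkeeping against the two earlier propositions. The only points that need a little care are: (a) the integer $d$ enters in three a priori distinct guises --- the Calabi--Yau dimension, the injective dimension appearing in the AS-Gorenstein condition, and the unique degree in which $\Ext_H^{\bullet}({}_\vps\kk, H)$ is nonzero in Proposition \ref{prop ascy} --- and these must be identified, which is exactly what the last line of Proposition \ref{prop ascy} provides; and (b) the bijective-antipode hypothesis is what licenses the use of Proposition \ref{prop ascy}, both for $H$ itself and (via $H^{op}$) in the argument for part (iii).
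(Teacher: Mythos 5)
Your proposal is correct and follows exactly the paper's route: the corollary is proved there simply by citing Propositions \ref{prop hom} and \ref{prop ascy}, and your write-up is just a careful expansion of that citation, including the identification of the Calabi--Yau dimension with the injective dimension via the final assertion of Proposition \ref{prop ascy}. No issues.
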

\begin{proof}
It follows from Proposition \ref{prop hom} and Proposition \ref{prop ascy}.
\end{proof}

\subsection{Yetter-Drinfeld modules} In this subsection, we recall some definitions related to Yetter-Drinfeld modules.
\begin{defn}
Let $H$ be a Hopf algebra. A
\textit{(right-right) Yetter-Drinfeld module} $V$ over $H$ is simultaneously a
right $H$-module and a right $H$-comodule satisfying the compatibility
condition $$\dt( v\cdot h) = v_{(0)}\cdot h_2
\ot S(h_1) v_{(1)}h_3,
$$ for any $v\in V$, $h\in H$.
\end{defn}

We denote by $\mc{YD}^H_H$ the category of Yetter-Drinfeld modules
over $H$ with morphisms given by $H$-linear and $H$-colinear maps.  Endowed with the usual tensor product of modules and comodules, it
is a monoidal category, with unit the trivial Yetter-Drinfeld module $\kk$.

We can always construct a Yetter-Drinfeld module from a right comodule.

\begin{lem}\cite[Proposion 3.1]{bi} Let $H$ be a Hopf algebra and $V$ a right $H$-comodule.  Endow $V\ot H$ with the right $H$-module structure defined
 by multiplication on the right. Then the linear map
$$\begin{array}{rcl}V\ot H&\ra& V\ot H\ot H\\
v\ot h&\mapsto &v_{(0)}\ot h_2\ot S(h_1)v_{(1)} h_3\end{array}$$
endows $V\ot H$ with a right $H$-comodule structure, and with a right-right Yetter-Drinfeld module structure.
We denote by $V\boxtimes H$ the resulting Yetter-Drinfeld module.
\end{lem}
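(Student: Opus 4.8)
The plan is to verify directly the two structure axioms — that the stated map $\delta_{V\ot H}:v\ot h\mapsto v_{(0)}\ot h_2\ot S(h_1)v_{(1)}h_3$ is a coassociative, counital right $H$-comodule structure, and that this comodule structure is compatible with the right multiplication module structure in the Yetter–Drinfeld sense — all as straightforward computations in Sweedler notation. First I would check counitality: applying $\id\ot\id\ot\varepsilon$ to $\delta_{V\ot H}(v\ot h)$ gives $v_{(0)}\ot h_2\,\varepsilon(S(h_1))\varepsilon(v_{(1)})\varepsilon(h_3)$, which collapses via $\varepsilon\circ S=\varepsilon$, the counit axiom for the $H$-comodule $V$, and the counit axiom for $H$, down to $v\ot h$. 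Next I would check coassociativity: one must show $(\delta_{V\ot H}\ot\id)\circ\delta_{V\ot H}=(\id\ot\bdt)\circ\delta_{V\ot H}$ on $v\ot h$. Expanding the left side reintroduces another copy of $\delta_{V\ot H}$ on $v_{(0)}\ot h_2$, producing a sum involving $S(h_1)$, $S(h_2)v_{(1)}h_4$ (after relabeling), $v_{(2)}$, etc.; expanding the right side one simply coproducts the last tensor factor $S(h_1)v_{(1)}h_3$. Reconciling the two is a bookkeeping exercise using coassociativity of $\bdt$, coassociativity of the coaction $\delta_V$, and the anti-coalgebra-map property $\bdt\circ S=(S\ot S)\circ\tau\circ\bdt$ of the antipode. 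The key cancellation is the standard telescoping $h_1 S(h_2)=\varepsilon(h)1$ inserted at the right places.

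Then I would verify the Yetter–Drinfeld compatibility condition $\delta(v'\cdot h')=v'_{(0)}\cdot h'_2\ot S(h'_1)v'_{(1)}h'_3$ for the object $V\boxtimes H$, where here $v'=v\ot g\in V\ot H$ is a generic element, $h'\in H$, and $\delta$ denotes the just-defined coaction $\delta_{V\ot H}$. The right module structure is $(v\ot g)\cdot h'=v\ot gh'$, so the left-hand side is $\delta_{V\ot H}(v\ot gh')=v_{(0)}\ot (gh')_2\ot S((gh')_1)v_{(1)}(gh')_3=v_{(0)}\ot g_2h'_2\ot S(h'_1)S(g_1)v_{(1)}g_3h'_3$. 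On the right-hand side one must compute $(v_{(0)}\ot g_2 h'_2)\cdot$-nothing, rather $((v\ot g)\cdot h')$ unravelled through the formula: writing $(v\ot g)_{(0)}=v_{(0)}\ot g_2$ and $(v\ot g)_{(1)}=S(g_1)v_{(1)}g_3$, the RHS becomes $(v_{(0)}\ot g_2)\cdot h'_2\ \ot\ S(h'_1)\big(S(g_1)v_{(1)}g_3\big)h'_3 = v_{(0)}\ot g_2 h'_2\ \ot\ S(h'_1)S(g_1)v_{(1)}g_3 h'_3$. The two sides now coincide on the nose, so the compatibility is in fact an \emph{identity} once the module and comodule structures are written out — this is the cleanest step.

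The main obstacle is the coassociativity verification in the first paragraph: it is the one place where several axioms (coassociativity of $\bdt$, coassociativity of $\delta_V$, and both the algebra-anti-homomorphism and coalgebra-anti-homomorphism properties of $S$) must be combined simultaneously, and the Sweedler indices proliferate (one ends up tracking $h_1,\dots,h_5$ together with $v_{(0)},v_{(1)},v_{(2)}$). I would organize this by first computing $(\id\ot\bdt)\circ\delta_{V\ot H}(v\ot h)=v_{(0)}\ot h_2\ot (S(h_1)v_{(1)}h_3)_1\ot (S(h_1)v_{(1)}h_3)_2$, expanding each factor using that $\bdt$ is an algebra map and $\bdt S=(S\ot S)\tau\bdt$, which gives $v_{(0)}\ot h_3\ot S(h_2)v_{(1)}h_4\ot S(h_1)v_{(2)}h_5$; then computing $(\delta_{V\ot H}\ot\id)\circ\delta_{V\ot H}(v\ot h)$ and showing it reduces to the same expression. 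As a sanity check one may note that $V\boxtimes H$ is precisely the image of $V$ under the well-known functor sending a comodule to a relative Hopf module and then to a Yetter–Drinfeld module (the $H$-comodule $V$ tensored with the regular Yetter–Drinfeld module $H$, or equivalently the adjoint-coaction construction), so abstract nonsense guarantees the result; nonetheless I would include the direct Sweedler verification for completeness, exactly as the preceding lemmas in this section are proved by hand.
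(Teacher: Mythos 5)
Your verification is correct; the paper itself gives no proof of this lemma (it is quoted from Bichon), and the direct Sweedler-notation check you outline is exactly the standard argument, with the Yetter--Drinfeld compatibility indeed holding on the nose. One small correction: the coassociativity check does not need the telescoping identity $h_1S(h_2)=\varepsilon(h)1$ at all --- expanding $(\delta_{V\ot H}\ot\id)\circ\delta_{V\ot H}$ and $(\id\ot\Delta)\circ\delta_{V\ot H}$ both yield $v_{(0)}\ot h_3\ot S(h_2)v_{(1)}h_4\ot S(h_1)v_{(2)}h_5$ using only coassociativity of $\Delta$ and of the coaction on $V$, multiplicativity of $\Delta$, and $\Delta\circ S=(S\ot S)\circ\tau\circ\Delta$, so the two sides agree directly, just as in your compatibility step.
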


\begin{defn}\cite[Definition 3.5]{bi}\label{defn fyd}
Let $H$ be a Hopf algebra. A  Yetter-Drinfeld module over $H$ is said to be \textit{free} if it is isomorphic to $V\bxt H$ for some right $H$-comodule $V$.
\end{defn}

A free Yetter-Drinfeld module is obviously free as a right $H$-module. We call a free Yetter-Drinfeld module $V\bxt H$ \it{finitely generated} if $V$ is finite dimensional.

In \cite{bi2}, Bichon introduced the notion
of relative projective Yetter-Drinfeld module, corresponding to  the notion of
relative projective Hopf bimodule considered in \cite{ss} via the monoidal equivalence between Yetter-Drinfeld modules and Hopf bimodules.

\begin{defn}\cite[Definition 4.1]{bi2}\label{defn rpyd}
Let $H$ be a Hopf algebra. A  Yetter-Drinfeld module $P$ over $H$ is said to be \textit{relative projective} if the functor $\Hom_{\mc{YD}^H_H}(P,-)$ transforms exact sequences of Yetter-Drinfeld modules that splits as sequences of comodules to exact sequences of vector spaces.
\end{defn}

The following lemma shows that  relative projective  Yetter-Drinfeld modules are precisely  direct summands of free Yetter-Drinfeld modules.

\begin{lem}\cite[Proposition 4.2]{bi2}\label{lem reproj}
Let $P$ be a Yetter-Drinfeld module over a Hopf algebra $H$. The following assertions are equivalent.
\begin{enumerate}
\item $P$ is relative projective.
\item Any epimorphism of Yetter-Drinfeld modules $f: M\ra P$ that admits a comodule section  admits a Yetter-Drinfeld module section.
\item $P$ is a direct summand of a free Yetter-Drinfeld module.
\end{enumerate}
\end{lem}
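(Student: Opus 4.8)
The natural approach is to prove the cycle $(1)\Rightarrow(2)\Rightarrow(3)\Rightarrow(1)$, working in the abelian category $\mc{YD}^H_H$ and using throughout that $\Hom_{\mc{YD}^H_H}(-,-)$ is left exact, so that the condition in Definition \ref{defn rpyd} amounts to asking that the induced map $\Hom_{\mc{YD}^H_H}(P,g)$ be surjective for every Yetter-Drinfeld epimorphism $g$ admitting a comodule section. For $(1)\Rightarrow(2)$, given such an epimorphism $f\colon M\ra P$ with comodule section $s$, the short exact sequence $0\ra\Ker f\ra M\xra{f}P\ra 0$ splits as a sequence of comodules, so by $(1)$ the map $\Hom_{\mc{YD}^H_H}(P,M)\ra\Hom_{\mc{YD}^H_H}(P,P)$ is onto and $\id_P$ lifts to a Yetter-Drinfeld section of $f$.

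For $(2)\Rightarrow(3)$ I would exhibit a canonical comodule-split free cover of $P$. Regard $P$ only as a comodule and form the free Yetter-Drinfeld module $P\bxt H$; the original action then defines a map $\pi\colon P\bxt H\ra P$, $\pi(v\ot h)=v\cdot h$, and one checks --- this is precisely where the Yetter-Drinfeld compatibility of $P$ enters --- that $\pi$ is a morphism in $\mc{YD}^H_H$, that it is surjective (as $\pi(v\ot 1)=v$), and that $v\mapsto v\ot 1$ is a comodule section of it. Then $(2)$ yields a Yetter-Drinfeld section $\sigma$ of $\pi$; the endomorphism $\sigma\pi$ of $P\bxt H$ is then an idempotent in $\mc{YD}^H_H$ with image isomorphic to $P$, so $P$ is a direct summand of the free module $P\bxt H$.

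For $(3)\Rightarrow(1)$, since $\Hom_{\mc{YD}^H_H}(-,-)$ turns a finite direct sum in the first slot into a product, it suffices to treat a free module $V\bxt H$. Here I would establish the natural isomorphism
$$\Hom_{\mc{YD}^H_H}(V\bxt H,M)\;\cong\;\Hom_{\mc{M}^H}(V,M),\qquad \phi\longmapsto\bigl(v\mapsto\phi(v\ot 1)\bigr),$$
whose inverse sends a colinear map $\psi$ to $v\ot h\mapsto\psi(v)\cdot h$; checking that both assignments are well defined and mutually inverse is a direct manipulation with the module, comodule and Yetter-Drinfeld axioms of $M$. Under this identification a comodule-split Yetter-Drinfeld epimorphism $g\colon M\ra N$ with comodule section $t$ induces $\Hom_{\mc{M}^H}(V,M)\ra\Hom_{\mc{M}^H}(V,N)$, $\psi\mapsto g\psi$, which is surjective since $\chi\mapsto t\chi$ is a section; this gives relative projectivity of $V\bxt H$, hence of its direct summands. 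The only step that is not pure diagram chasing is this Yetter-Drinfeld bookkeeping --- verifying that $\pi$ is a morphism in $\mc{YD}^H_H$ with a comodule section, and that the above Hom-adjunction is well defined --- and that is where I would expect to spend the effort.
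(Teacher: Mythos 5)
Your proof is correct: the cycle $(1)\Rightarrow(2)\Rightarrow(3)\Rightarrow(1)$, with the comodule-split cover $\pi\colon P\bxt H\ra P$, $v\ot h\mapsto v\cdot h$ (whose colinearity is exactly the Yetter-Drinfeld compatibility of $P$) and the adjunction $\Hom_{\mc{YD}^H_H}(V\bxt H,M)\cong\Hom_{\mc{M}^H}(V,M)$, is precisely the standard argument. The paper itself gives no proof, quoting the statement from \cite{bi2}, and your argument is essentially the one found there.
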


It is clear that a relative projective Yetter-Drinfeld module is a projective module. We call a relative projective Yetter-Drinfeld module \textit{finitely generated} if it is a direct summand of a finitely generated free Yetter-Drinfeld module.

\begin{defn}\label{defn ydres}
Let $H$ be a Hopf algebra and let $M\in \mc{YD}^H_H$. A \textit{free (resp. relative projective) Yetter-Drinfeld module resolution} of $M$ consists of a complex of free (resp. relative projective) Yetter-Drinfeld modules
$$\textbf{P}_*:\cdots\ra P_{i+1}\ra P_i\ra \cdots\ra P_1\ra P_0\ra 0$$
for which there exists a Yetter-Drinfeld module morphism $\epsilon:P_0\ra M$ such that
$$\cdots\ra P_{i+1}\ra P_i\ra \cdots\ra P_1\ra P_0\xra{\epsilon}M\ra 0$$
is an exact sequence in $\mc{YD}^H_H$.

\end{defn}

If each $P_i$, $i\le 0$, is a finitely generated free (resp. relative projective) Yetter-Drinfeld module, we call this complex $\textbf{P}_*$ a  finitely generated free (resp. relative projective) Yetter-Drinfeld module resolution.

Of course each free Yetter-Drinfeld module resolution is a free resolution and each relative projective Yetter-Drinfeld module resolution is a projective resolution.

\begin{lem}\label{lem resyd}
Let $\mc{C}$ be a cogroupoid and $X,Y\in\ob(\mc{C})$. The equivalent functor $-\sq_{\mc{C}(X,X)}\mc{C}(X,Y)$ sends any relative projective Yetter-Drinfeld module resolution $\textbf{P}_*$ of the trivial Yetter-Drinfeld module $\kk$ over $\mc{C}(X,X)$ to a  relative projective Yetter-Drinfeld module resolution $\textbf{P}_*\sq_{\mc{C}(X,X)}\mc{C}(X,Y)$ of the trivial Yetter-Drinfeld module $\kk$  over $\mc{C}(Y,Y)$. In particular, if $\textbf{P}_*$ is finitely generated (resp. bounded), then $\textbf{P}_*\sq_{\mc{C}(X,X)}\mc{C}(X,Y)$ is also finite generated (resp. bounded).
\end{lem}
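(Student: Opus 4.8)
The plan is to check that the monoidal equivalence $-\sq_{\mc{C}(X,X)}\mc{C}(X,Y)$ between the Yetter-Drinfeld categories, which we already know exists by Theorem~6.3 of \cite{bi1}, is exact and carries relative projectives to relative projectives; once these two facts are in hand the statement about resolutions is immediate, and the "finitely generated" and "bounded" refinements follow by inspecting the construction.

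First I would recall that $-\sq_{\mc{C}(X,X)}\mc{C}(X,Y)$ is a monoidal \emph{equivalence} of categories, hence in particular it is exact: it preserves kernels and cokernels. Therefore, applying it termwise to a complex $\textbf{P}_*$ that is a resolution of $\kk$ — i.e.\ such that $\cdots\ra P_1\ra P_0\xra{\epsilon}\kk\ra 0$ is exact in $\mc{YD}_{\mc{C}(X,X)}^{\mc{C}(X,X)}$ — produces a complex $\textbf{P}_*\sq_{\mc{C}(X,X)}\mc{C}(X,Y)$ together with the morphism $\epsilon\sq_{\mc{C}(X,X)}\mc{C}(X,Y)$ to $\kk\sq_{\mc{C}(X,X)}\mc{C}(X,Y)$, and this augmented complex is again exact. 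It remains only to identify $\kk\sq_{\mc{C}(X,X)}\mc{C}(X,Y)$ with the trivial Yetter-Drinfeld module $\kk$ over $\mc{C}(Y,Y)$, which is standard: the cotensor of the trivial comodule with $\mc{C}(X,Y)$ is the space of coinvariants, and this is one-dimensional with the trivial module-comodule structure, compatibly with the Yetter-Drinfeld structure of Lemma~\ref{lem yd}.

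Next I would verify that the functor sends relative projective Yetter-Drinfeld modules to relative projective ones. By Lemma~\ref{lem reproj}, $P$ is relative projective over $\mc{C}(X,X)$ if and only if $P$ is a direct summand of a free Yetter-Drinfeld module $V\bxt \mc{C}(X,X)$. A monoidal equivalence preserves direct summands, so it suffices to show that $(V\bxt \mc{C}(X,X))\sq_{\mc{C}(X,X)}\mc{C}(X,Y)$ is again free, i.e.\ of the form $W\bxt\mc{C}(Y,Y)$ for some $\mc{C}(Y,Y)$-comodule $W$. This should follow from the fact that the free Yetter-Drinfeld module $V\bxt H$ is, as a right-comodule equipped with its module structure, induced from the comodule $V$; more precisely $V\bxt H \cong V\ot H$ where $H$ carries the regular coaction, and cotensoring with $\mc{C}(X,Y)$ commutes with the $\ot V$ and replaces $\mc{C}(X,X)$ by $\mc{C}(X,Y)$, which one then identifies — via the Galois property of $\mc{C}(X,Y)$, equivalently via the monoidal equivalence applied to the comodule underlying $V$ — with a free Yetter-Drinfeld module $(V\sq_{\mc{C}(X,X)}\mc{C}(X,Y))\bxt\mc{C}(Y,Y)$ over $\mc{C}(Y,Y)$. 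Tracking this isomorphism is the one genuinely computational point; I expect it to be the main obstacle, though it is a routine if slightly tedious diagram chase using the explicit module and comodule formulas in Lemma~\ref{lem yd} and the cogroupoid axioms.

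Finally, for the refinements: if $\textbf{P}_*$ is bounded then $\textbf{P}_*\sq_{\mc{C}(X,X)}\mc{C}(X,Y)$ has the same (finite) length since the functor is applied termwise. If $\textbf{P}_*$ is finitely generated, then each $P_i$ is a direct summand of some $V_i\bxt\mc{C}(X,X)$ with $\dim V_i<\infty$; by the previous paragraph its image is a direct summand of $(V_i\sq_{\mc{C}(X,X)}\mc{C}(X,Y))\bxt\mc{C}(Y,Y)$, and $V_i\sq_{\mc{C}(X,X)}\mc{C}(X,Y)$ is finite dimensional because the underlying monoidal equivalence $\mc{M}^{\mc{C}(X,X)}\cong^\ot\mc{M}^{\mc{C}(Y,Y)}$ preserves finite-dimensionality (it has an inverse functor, and both send the one-dimensional trivial comodule to the one-dimensional trivial comodule, hence preserve length of comodules). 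Thus each term of the image resolution is a finitely generated relative projective Yetter-Drinfeld module, which completes the argument.
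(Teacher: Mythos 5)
Your proposal is correct and follows essentially the same route as the paper: apply the exact equivalence termwise, identify $\kk\sq_{\mc{C}(X,X)}\mc{C}(X,Y)\cong\kk$, reduce relative projectives to direct summands of free Yetter--Drinfeld modules via Lemma~\ref{lem reproj}, and use that free Yetter--Drinfeld modules are sent to free ones, with finite dimensionality of $V\sq_{\mc{C}(X,X)}\mc{C}(X,Y)$ preserved by the comodule equivalence. The one computation you defer, namely $(V\bxt \mc{C}(X,X))\sq_{\mc{C}(X,X)}\mc{C}(X,Y)\cong (V\sq_{\mc{C}(X,X)}\mc{C}(X,Y))\bxt \mc{C}(Y,Y)$, is precisely what the paper quotes from Theorem 4.4 of \cite{bi}, so no new gap arises.
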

\proof  By applying the functor $-\sq_{\mc{C}(X,X)}\mc{C}(X,Y)$ to the complex $\textbf{P}_*\ra \kk \ra 0$, we obtain the exact sequence of Yetter-Drinfeld modules
\begin{equation}\label{eq 2}\cdots \ra P_i\sq_{\mc{C}(X,X)}\mc{C}(X,Y)\xra{\dt_i\sq\mc{C}(X,Y)} P_{i-1}\sq_{\mc{C}(X,X)}\mc{C}(X,Y)\ra \cdots\hspace{12mm}\end{equation}
$$\hspace{10mm}\ra P_1\sq_{\mc{C}(X,X)}\mc{C}(X,Y)\ra P_0\sq_{\mc{C}(X,X)}\mc{C}(X,Y)\ra \kk\sq_{\mc{C}(X,X)}\mc{C}(X,Y)\ra 0.$$
It is easy to check that $\kk\sq_{\mc{C}(X,X)}\mc{C}(X,Y)\cong \kk$ as Yetter-Drinfeld modules over $\mc{C}(Y,Y)$. We claim that each $P_i\sq_{\mc{C}(X,X)}\mc{C}(X,Y)$  is a direct summand of a free Yetter-Drinfeld module.  Each $P_i$ is a  relative projective Yetter-Drinfeld module over $\mc{C}(X,X)$. That is, there is a Yetter-drinfeld module $Q_i$ and a comodule $V_i$, such that
$$P_i\op Q_i\cong V_i\bxt \mc{C}(X,X).$$ After applying the functor
$-\sq_{\mc{C}(X,X)}\mc{C}(X,Y)$, we obtain that
$$(P_i\op Q_i)\sq_{\mc{C}(X,X)}\mc{C}(X,Y)\cong (V_i\bxt \mc{C}(X,X))\sq_{\mc{C}(X,X)}\mc{C}(X,Y).$$
The cotensor functor $-\sq_{\mc{C}(X,X)}\mc{C}(X,Y)$ commutes with direct sums, so
$$(P_i\op Q_i)\sq_{\mc{C}(X,X)}\mc{C}(X,Y)\cong (P_i\sq_{\mc{C}(X,X)}\mc{C}(X,Y))\op( Q_i\sq_{\mc{C}(X,X)}\mc{C}(X,Y)).$$
Theorem 4.4 in \cite{bi} shows that
$$(V_i\bxt \mc{C}(X,X))\sq_{\mc{C}(X,X)}\mc{C}(X,Y)\cong (V_i\sq_{\mc{C}(X,X)}\mc{C}(X,Y))\bxt \mc{C}(Y,Y)$$ as Yetter-Drinfeld modules over $\mc{C}(Y,Y)$.
Therefore, we obtain the Yetter-Drinfeld module isomorphism
$$(P_i\sq_{\mc{C}(X,X)}\mc{C}(X,Y))\op( Q_i\sq_{\mc{C}(X,X)}\mc{C}(X,Y))\cong (V_i\sq_{\mc{C}(X,X)}\mc{C}(X,Y))\bxt \mc{C}(Y,Y).$$
Hence, each $P_i\sq_{\mc{C}(X,X)}\mc{C}(X,Y)$ is a relative projective Yetter-Drinfeld module over $\mc{C}(Y,Y)$. So $\textbf{P}_*\sq_{\mc{C}(X,X)}\mc{C}(X,Y)$ is a relative projective Yetter-Drinfeld module resolution of the trivial Yetter-Drinfeld module $\kk$  over $\mc{C}(Y,Y)$.

By \cite[Proposition 1.16]{bi1},  if $V_i$ is a finite dimensional comodule over $\mc{C}(X,X)$, then $V_i\sq_{\mc{C}(X,X)}\mc{C}(X,Y)$ is a  finite dimensional comodule over $\mc{C}(Y,Y)$. so if $\textbf{P}_*$ is finitely generated, then $\textbf{P}_*\sq_{\mc{C}(X,X)}\mc{C}(X,Y)$ is also finite generated. The argument for boundedness is clear.\qed

\subsection{Homological properties of cogroupoids}
From now on, until the end of the paper, we assume that the Hopf algebras mensioned have \textit{bijective} antipodes. we also assume that any cogroupoid $\mc{C}$ mentioned  satisfies that $S_{X,Y}$ is \textit{bijective} for any $X,Y\in \ob(\mc{C})$. This assumption is to make sure that $S_{Y,X}\circ S_{X,Y}$ is an algebra automorphism of $\mc{C}(X,Y)$. Actually, if $\mc{C}$ is a connected cogroupoid such that for some object $X$, $\mc{C}(X,X)$ is a Hopf algebra with bijective antipode, then $S_{X,Y}$ is bijective for any objects $X, Y$ (see Remark 2.6 in \cite{yu}).

Let $\mc{C}$ be a cogroupoid and $X,Y\in \ob(\mc{C})$. Both the morphisms $\bdt_{X,X}^Y:\mc{C}(X,X)\ra \mc{C}(X,Y)\ot \mc{C}(Y,X)$  and $S_{Y,X}:\mc{C}(Y,X)\ra \mc{C}(X,Y)^{op}$ are algebra homomorphisms, so
\begin{equation}\label{mod}D_1=(\id\ot S_{Y,X})\circ(\bdt^Y_{X,X}):\mc{C}(X,X)\ra \mc{C}(X,Y)^e\end{equation}
is an algebra homomorphism. This induces a functor $\mc{L}_X:\Mod$-$\mc{C}(X,Y)^e\ra \Mod$-$\mc{C}(X,X)$.  Let $M$ be a $\mc{C}(X,Y)$-bimodule.  The left $\mc{C}(X,X)$-module structure of $\mc{L}_X(M)$ is given by  $$x\ra m=x^{X,Y}_1mS_{Y,X}(x^{Y,X}_2),$$
for any $m\in M$ and $x\in \mc{C}(X,X)$.
The functor $\mc{L}_X$ is just  the functor $\mc{L}$ defined in \cite{yu}. We need to define the following functors. They share similar properties as the functor $\mc{L}_X$.
\begin{itemize}
\item The functor $\mc{R}_X:\Mod$-$\mc{C}(X,Y)^e\ra \Mod$-$\mc{C}(X,X)^{op}$ induced by the algebra homomorphism $D_2=(\id\ot S^{-1}_{X,Y})\circ(\bdt^Y_{X,X}):\mc{C}(X,X)\ra \mc{C}(X,Y)^e$;
\item The functor $\mc{R}_Y:\Mod$-$\mc{C}(X,Y)^e\ra \Mod$-$\mc{C}(Y,Y)^{op}$ induced by the algebra homomorphism $D_3=\tau\circ(S_{Y,X}\ot \id)\circ(\bdt^X_{Y,Y}):\mc{C}(Y,Y)\ra \mc{C}(X,Y)^e$;
\item The functor $\mc{L}_Y:\Mod$-$\mc{C}(X,Y)^e\ra \Mod$-$\mc{C}(Y,Y)$ induced by the algebra homomorphism $D_4=\tau\circ(S^{-1}_{X,Y}\ot \id)\circ(\bdt^X_{Y,Y}):\mc{C}(Y,Y)\ra \mc{C}(X,Y)^e$.
\end{itemize}
Here, $\tau:\mc{C}(X,Y)^{op}\ot \mc{C}(X,Y)\ra \mc{C}(X,Y)\ot \mc{C}(X,Y)^{op}$ is the flip map.

Let $M$ be a $\mc{C}(X,Y)$-bimodule. The right $\mc{C}(X,X)$-module structure of $\mc{R}_X(M)$, the right $\mc{C}(Y,Y)$-module structure of $\mc{R}_Y(M)$ and the left $\mc{C}(Y,Y)$-module structure of $\mc{L}_Y(M)$ is given by
$$m \leftarrow x=S^{-1}_{X,Y}(x^{Y,X}_2)m x^{X,Y}_1,$$
 $$m \leftarrow y=S_{Y,X}(x^{Y,X}_1)m x^{X,Y}_2,$$
and
$$y\ra m=x^{X,Y}_2mS^{-1}_{X,Y}(x^{Y,X}_1),$$
for any $m\in M$, $x\in \mc{C}(X,X)$ and $y\in \mc{C}(Y,Y)$, respectively.

As usual, we view $\mc{C}(X,Y)^e$ as a left and a right $\mc{C}(X,Y)^e$-module respectively in the following ways:
\begin{equation}(a\ot b)\ra (x\ot y)=a  x\ot y  b,\end{equation}
and \begin{equation}\label{lm}(x\ot y)\la(a\ot b) =x a\ot b  y,\end{equation}
for any $x\ot y$ and $a\ot b \in \mc{C}(X,Y)^e$. Then we have the  modules $\mc{L}_X(\mc{C}(X,Y)^e)$,  $\mc{R}_X(\mc{C}(X,Y)^e)$, $\mc{R}_Y(\mc{C}(X,Y)^e)$ and $\mc{L}_Y(\mc{C}(X,Y)^e)$. They are all free modules. Here we point out that we use the left $\mc{C}(X,Y)^e$-module for $\mc{L}$ and right $\mc{C}(X,Y)^e$-module for $\mc{R}$.

Let ${}_*\mc{C}(X,X)\ot \mc{C}(X,Y)$ be the left $\mc{C}(X,X)$-module defined by the left multiplication of the factor $\mc{C}(X,X)$, and
$\mc{C}(X,X)_*\ot \mc{C}(X,Y)$ be the right $\mc{C}(X,X)$-module defined by the right multiplication of the factor $\mc{C}(X,X)$. Similarly, let ${}_*\mc{C}(Y,Y)\ot \mc{C}(X,Y)$ be the left $\mc{C}(Y,Y)$-module defined by the left multiplication of the factor $\mc{C}(Y,Y)$, and
$\mc{C}(Y,Y)_*\ot \mc{C}(X,Y)$ be the right $\mc{C}(Y,Y)$-module defined by the right multiplication of the factor $\mc{C}(Y,Y)$.
Lemma 2.1 in \cite{yu} shows that  $\mc{L}_X(\mc{C}(X,Y)^e)\cong {}_*\mc{C}(X,X)\ot \mc{C}(X,Y)$ as left $\mc{C}(X,X)$-modules. The isomorphism is given as follows:
$$\begin{array}{ccl}\mc{L}_X(\mc{C}(X,Y)^e) &\lra &{}_*\mc{C}(X,X)\ot \mc{C}(X,Y)\\x\ot y &\longmapsto& x_1^{X,X} \ot y S_{Y,X}(S_{X,Y}(x_2^{X,Y}))\end{array}$$
with inverse
$$\begin{array}{rcl}
 {}_*\mc{C}(X,X)\ot \mc{C}(X,Y) &\lra &\mc{L}_X(\mc{C}(X,Y)^e)\\
x\ot y &\longmapsto  & x^{X,Y}_1 \ot y S_{Y,X}(x^{Y,X}_2).
\end{array}$$
Similarly, we obtain that
\begin{itemize}
\item $\mc{R}_X(\mc{C}(X,Y)^e)\cong  \mc{C}(X,X)_*\ot \mc{C}(X,Y)$ as right $\mc{C}(X,X)$-modules. The isomorphism is given by
$$\begin{array}{ccl}\mc{R}_X(\mc{C}(X,Y)^e) &\lra &\mc{C}(X,X)_*\ot \mc{C}(X,Y)\\x\ot y &\longmapsto& x_1^{X,X} \ot S^{-1}_{X,Y}(S^{-1}_{Y,X}(x_2^{X,Y})) y, \end{array}$$
with inverse
$$\begin{array}{rcl}
 \mc{C}(X,X)_*\ot \mc{C}(X,Y) &\lra &\mc{R}_X(\mc{C}(X,Y)^e)\\
x\ot y &\longmapsto  & x^{X,Y}_1 \ot S^{-1}_{X,Y}(x^{Y,X}_2) y;
\end{array}$$
\item $\mc{R}_Y(\mc{C}(X,Y)^e)\cong \mc{C}(Y,Y)_*\ot \mc{C}(X,Y)$  as right $\mc{C}(Y,Y)$-modules. The isomorphism is given by
$$\begin{array}{ccl}\mc{R}_Y(\mc{C}(X,Y)^e) &\lra &\mc{C}(Y,Y)_*\ot \mc{C}(X,Y)\\x\ot y &\longmapsto& x_2^{Y,Y} \ot S _{Y,X}(S _{X,Y}(x_1^{X,Y})) y,\end{array}$$
with inverse
$$\begin{array}{rcl}
 \mc{C}(Y,Y)_*\ot \mc{C}(X,Y) &\lra &\mc{R}_X(\mc{C}(X,Y)^e)\\
x\ot y &\longmapsto  & x^{X,Y}_2 \ot S_{Y,X}(x^{Y,X}_1)y;
\end{array}$$
\item $\mc{L}_Y(\mc{C}(X,Y)^e)\cong  {}_*\mc{C}(Y,Y)\ot \mc{C}(X,Y)$ as left $\mc{C}(Y,Y)$-modules. The isomorphism is given by
 $$\begin{array}{ccl}\mc{L}_Y(\mc{C}(X,Y)^e) &\lra &{}_*\mc{C}(Y,Y)\ot \mc{C}(X,Y)\\x\ot y &\longmapsto& x_2^{Y,Y} \ot y S^{-1}_{X,Y}(S^{-1}_{Y,X}(x_1^{X,Y})),\end{array}$$
with inverse
$$\begin{array}{rcl}
 {}_*\mc{C}(Y,Y)\ot \mc{C}(X,Y) &\lra &\mc{L}_Y(\mc{C}(X,Y)^e)\\
x\ot y &\longmapsto  & x^{X,Y}_2 \ot y S^{-1}_{X,Y}(x^{Y,X}_1).
\end{array}$$
\end{itemize}

It is showed in \cite[Lemma 2.2]{yu}  that the Hochschild cohomology of a bimodule $M$ over $\mc{C}(X,Y)$ can be computed through the extension groups of the trivial module ${}_\vps\kk$ by $\mc{L}_X(M)$. Similar results hold for the functors $\mc{R}_X$, $\mc{R}_Y$, $\mc{L}_Y$.

\begin{lem}\label{lem kk}Let $\mc{C}$ be a cogroupoid, and $X,Y\in \ob(\mc{C})$.  Let $M$ be a $\mc{C}(X,Y)$-bimodule.
\begin{enumerate}
\item[(i)] $\Ext^i_{\mc{C}(X,Y)^e}(\mc{C}(X,Y),M)\cong \Ext^i_{\mc{C}(X,X)}({}_\vps\kk,\mc{L}_X(M))$,
for all $i\le 0$.
\item[(ii)] $\Ext^i_{\mc{C}(X,Y)^e}(\mc{C}(X,Y),M)\cong \Ext^i_{\mc{C}(X,X)^{op}}(\kk_\vps,\mc{R}_X(M))$,
for all $i\le 0$.
\item[(iii)] $\Ext^i_{\mc{C}(X,Y)^e}(\mc{C}(X,Y),M)\cong \Ext^i_{\mc{C}(Y,Y)}({}_\vps\kk,\mc{L}_Y(M))$,
for all $i\le 0$.
\item[(iv)] $\Ext^i_{\mc{C}(X,Y)^e}(\mc{C}(X,Y),M)\cong \Ext^i_{\mc{C}(Y,Y)^{op}}(\kk_\vps,\mc{R}_Y(M))$,
for all $i\le 0$.
\end{enumerate}
\end{lem}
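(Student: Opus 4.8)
The plan is to mimic the proof of Lemma \ref{lem rlk} verbatim, four times over, once for each of the four functors $\mc{L}_X$, $\mc{R}_X$, $\mc{L}_Y$, $\mc{R}_Y$. The template is as follows. For the functor $\mc{L}_X$ (case (i)), the algebra homomorphism $D_1$ of \eqref{mod} makes $\mc{C}(X,Y)^e$ into a $\mc{C}(X,Y)^e$-$\mc{C}(X,X)$-bimodule: the left $\mc{C}(X,Y)^e$-action is the regular one, and the right $\mc{C}(X,X)$-action is $(u\ot v)\la x = u\,x_1^{X,Y}\ot S_{Y,X}(x_2^{Y,X})\,v$. One checks, exactly as in the excerpt's parenthetical computation for ${}_{H^e}H^e_H$, that this bimodule is \emph{free} as a right $\mc{C}(X,X)$-module, with an explicit change-of-variables isomorphism onto $\mc{C}(X,X)_*\ot\mc{C}(X,Y)$ built from the formulas already displayed for $\mc{L}_X(\mc{C}(X,Y)^e)$; the inverse uses $S_{Y,X}\circ S_{X,Y}$ being an automorphism, which is guaranteed by the standing bijectivity hypothesis on $S_{X,Y}$.

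**Next I would** identify the functor $\mc{L}_X$ with $\Hom_{\mc{C}(X,Y)^e}({}_{\mc{C}(X,Y)^e}\mc{C}(X,Y)^e{}_{\mc{C}(X,X)},-)$. Because the relevant bimodule is free (hence projective) as a right $\mc{C}(X,X)$-module, this Hom-functor is exact, so $\mc{L}_X$ is exact. Then the tensor-hom adjunction gives, for every $\mc{C}(X,Y)$-bimodule $M$,
$$\Hom_{\mc{C}(X,X)}\!\bigl(-,\mc{L}_X(M)\bigr)\cong\Hom_{\mc{C}(X,Y)^e}\!\bigl({}_{\mc{C}(X,Y)^e}\mc{C}(X,Y)^e{}_{\mc{C}(X,X)}\ot_{\mc{C}(X,X)}-,\,M\bigr),$$
so $\mc{L}_X$ carries injectives to injectives. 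Finally one observes $\Hom_{\mc{C}(X,Y)^e}(\mc{C}(X,Y),M)\cong\Hom_{\mc{C}(X,X)}({}_\vps\kk,\mc{L}_X(M))$ — this is the degree-zero statement, verified by a direct Sweedler computation using the cogroupoid counit axiom $\vps_X(a_1^{X,X})a_2^{X,Y}=a^{X,Y}$. Since $\mc{L}_X$ is exact and preserves injectives, deriving both sides of this degree-zero isomorphism in an injective resolution of $M$ yields the claimed isomorphism $\Ext^i_{\mc{C}(X,Y)^e}(\mc{C}(X,Y),M)\cong\Ext^i_{\mc{C}(X,X)}({}_\vps\kk,\mc{L}_X(M))$ for all $i\ge 0$ (the excerpt writes $i\le 0$, but this is evidently a typo for $i\ge 0$, matching Lemma \ref{lem rlk}).

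**Then I would** repeat this three more times, the only changes being bookkeeping: for $\mc{R}_X$ replace $\mc{C}(X,Y)^e$-bimodule gymnastics by the analogous right-module version and use $D_2$ together with the displayed isomorphism $\mc{R}_X(\mc{C}(X,Y)^e)\cong\mc{C}(X,X)_*\ot\mc{C}(X,Y)$, landing in $\Mod\text{-}\mc{C}(X,X)^{op}$ against $\kk_\vps$; for $\mc{R}_Y$ and $\mc{L}_Y$ use $D_3$, $D_4$ and the displayed isomorphisms $\mc{R}_Y(\mc{C}(X,Y)^e)\cong\mc{C}(Y,Y)_*\ot\mc{C}(X,Y)$, $\mc{L}_Y(\mc{C}(X,Y)^e)\cong{}_*\mc{C}(Y,Y)\ot\mc{C}(X,Y)$, and now invoke the counit axiom $a_1^{X,Y}\vps_Y(a_2^{Y,Y})=a^{X,Y}$ at the corresponding vertex $Y$. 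In each case the freeness of the auxiliary bimodule over the relevant Hopf algebra is what drives exactness and preservation of injectives.

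**The main obstacle** is purely computational rather than conceptual: one must produce, in each of the four cases, the correct $\mc{C}(X,Y)^e$–(Hopf algebra) bimodule together with its explicit free-module trivialization, and verify that the composite of the given change-of-variables map with its proposed inverse is the identity. These verifications are the same Sweedler manipulations already carried out in \cite{yu} for $\mc{L}_X$ (Lemma 2.1 and Lemma 2.2 there) and are entirely routine once the bijectivity of $S_{X,Y}$ — hence of $S_{Y,X}\circ S_{X,Y}$ — is in hand; the other three functors differ only by inserting $S^{-1}_{X,Y}$ in place of $S_{Y,X}$ and/or swapping the roles of $X$ and $Y$ and of left/right. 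No new idea beyond the proof of Lemma \ref{lem rlk} is required, so the write-up can legitimately say ``the proof is quite similar to that of Lemma \ref{lem rlk}'' and spell out only the four bimodule structures and the four degree-zero isomorphisms.
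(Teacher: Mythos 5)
Your proposal is correct and takes essentially the same route as the paper, which offers no separate argument for this lemma but defers to \cite[Lemma 2.2]{yu} and the proof of Lemma \ref{lem rlk}, i.e.\ precisely the four-fold transcription you describe (auxiliary $\mc{C}(X,Y)^e$--$\mc{C}(X,X)$ bimodule, freeness, identification of the functor with a Hom, preservation of injectives via adjunction, degree-zero isomorphism, then derive). One minor slip worth noting: exactness of $\mc{L}_X\cong\Hom_{\mc{C}(X,Y)^e}(\mc{C}(X,Y)^e,-)$ follows because the auxiliary bimodule is free as a \emph{left} $\mc{C}(X,Y)^e$-module (equivalently, $\mc{L}_X$ is a restriction functor), whereas its freeness as a right $\mc{C}(X,X)$-module is what is actually needed for the adjunction to show that injectives are preserved; this does not affect the validity of your argument.
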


\subsection{Main results}
In order to state our main results we need to define winding automorphisms of cogroupoids.

Let $\mc{C}$ be a cogroupoid and $X,Y\in \ob(\mc{C})$. Let $\xi:\mc{C}(X,X)\ra \kk$ be an algebra homomorphism.
The \textit{left winding automorphism} $[\xi]_{X,Y}^l$ of  $\mc{C}(X,Y)$ associated to  $\xi$ is defined to be
$$[\xi]_{X,Y}^l(a^{X,Y})=\xi(a_1^{X,X})a_2^{X,Y},$$
for any $a\in \mc{C}(X,Y)$.
Let $\eta:\mc{C}(Y,Y)\ra \kk$ be an algebra homomorphism. Similarly, the \textit{right winding automorphism} of  $\mc{C}(X,Y)$ associated to $\eta$ is defined to be $$[\eta]_{X,Y}^r(a^{X,Y})=a_1^{X,Y}\eta(a_2^{Y,Y}),$$
for any $a\in \mc{C}(X,Y)$.

\begin{lem}\label{windingaut}
Let $\mc{C}$ be a cogroupoid and  $X,Y\in \ob(\mc{C})$, let $\xi:\mc{C}(X,X)\ra \kk$, and $\eta:\mc{C}(Y,Y)\ra \kk$ be algebra homomorphisms.
\begin{enumerate}
\item[(i)] $([\xi]_{X,X}^l)^{-1}=[\xi S_{X,X}]^l$.
\item[(ii)] $\xi S_{X,X}^2=\xi$, so $[\xi]_{X,X}^l=[\xi S_{X,X}^2]_{X,X}^l$.
\item[(iii)] $[\xi]_{X,Y}^l\circ S_{Y,X}\circ S_{X,Y}=S_{Y,X}\circ S_{X,Y}\circ [\xi]_{X,Y}^l$.
\item[(i')] $([\eta]_{Y,Y}^r)^{-1}=[\eta S_{Y,Y}]^r$.
\item[(ii')] $\eta S_{Y,Y}^2=\eta$, so $[\eta]_{Y,Y}^r=[\eta S_{Y,Y}^2]_{Y,Y}^r$.
\item[(iii')] $[\eta]_{X,Y}^r\circ S_{Y,X}\circ S_{X,Y}=S_{Y,X}\circ S_{X,Y}\circ [\eta]_{X,Y}^r$.
\end{enumerate}
\end{lem}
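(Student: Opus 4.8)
These are all routine Sweedler-calculus identities. The plan is to dispatch (i), (ii), (i'), (ii') as internal statements about single Hopf algebras, and then to reduce (iii), (iii') to (ii), (ii') together with the formula $\bdt^Z_{X,Y}(S_{Y,X}(a^{Y,X}))=S_{Z,X}(a^{Z,X}_2)\ot S_{Y,Z}(a^{Y,Z}_1)$ recalled above. Observe first that $\bdt^X_{X,X}$ is just the coproduct of the Hopf algebra $\mc{C}(X,X)$, so $[\xi]^l_{X,X}$ is nothing but the usual left winding automorphism of $\mc{C}(X,X)$, and likewise $[\eta]^r_{Y,Y}$ is the usual right winding automorphism of $\mc{C}(Y,Y)$.

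For (i) I would expand the two composites $[\xi S_{X,X}]^l\circ[\xi]^l_{X,X}$ and $[\xi]^l_{X,X}\circ[\xi S_{X,X}]^l$ using coassociativity and check that both collapse to the identity: after reassociating, the relevant subexpression is $\xi\big(S_{X,X}(a^{X,X}_1)a^{X,X}_2\big)=\xi(\vps_X(a^{X,X})1)=\vps_X(a^{X,X})$ by the antipode axiom $S_{X,X}(a^{X,X}_1)a^{X,X}_2=\vps_X(a^{X,X})1$, and the counit axiom then finishes the job. For (ii) I would use that the algebra maps $\mc{C}(X,X)\to\kk$ form a group under convolution with unit $\vps_X$, the inverse of a character $\chi$ being $\chi\circ S_{X,X}$ (here one uses that $S_{X,X}:\mc{C}(X,X)\to\mc{C}(X,X)^{op}$ is an algebra map and that $\kk$ is commutative, so $\chi\circ S_{X,X}$ is again a character; verifying the group axioms is a computation of the same flavour as (i)). Since $\xi\circ S_{X,X}$ is the convolution inverse of $\xi$, its own inverse is simultaneously $\xi$ and $(\xi\circ S_{X,X})\circ S_{X,X}=\xi\circ S_{X,X}^2$, whence $\xi\circ S_{X,X}^2=\xi$; the equality $[\xi]^l_{X,X}=[\xi S_{X,X}^2]^l_{X,X}$ is then immediate. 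Parts (i') and (ii') follow by the same arguments applied to $\mc{C}(Y,Y)$ and to the right winding automorphism.

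For (iii), set $\sigma=S_{Y,X}\circ S_{X,Y}$ and reduce to showing $[\xi]^l_{X,Y}(\sigma(a^{X,Y}))=\sigma([\xi]^l_{X,Y}(a^{X,Y}))$ for a general $a^{X,Y}\in\mc{C}(X,Y)$. Applying the above formula twice --- first to the outer $S_{Y,X}$ with $Z=X$, then to the inner $S_{X,Y}$, which is the same formula with the roles of $X$ and $Y$ interchanged and again $Z=X$ --- one obtains
$$\bdt^X_{X,Y}\big(\sigma(a^{X,Y})\big)=S_{X,X}^2(a^{X,X}_1)\ot S_{Y,X}\big(S_{X,Y}(a^{X,Y}_2)\big),$$
where $\bdt^X_{X,Y}(a^{X,Y})=a^{X,X}_1\ot a^{X,Y}_2$. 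Plugging this into the definition of $[\xi]^l_{X,Y}$ and replacing $\xi\circ S_{X,X}^2$ by $\xi$ via (ii), the left-hand side $[\xi]^l_{X,Y}(\sigma(a^{X,Y}))$ becomes $\xi(a^{X,X}_1)\,S_{Y,X}(S_{X,Y}(a^{X,Y}_2))$, which by $\kk$-linearity of $S_{X,Y}$ and $S_{Y,X}$ equals $\sigma\big([\xi]^l_{X,Y}(a^{X,Y})\big)$. Part (iii') runs identically: one computes $\bdt^Y_{X,Y}(\sigma(a^{X,Y}))$ by taking $Z=Y$ throughout, uses the right winding automorphism, and invokes (ii') in place of (ii). The only delicate point is the bookkeeping --- applying the formula at the correct object ($Z=X$ for the left statements, $Z=Y$ for the right ones) and keeping each Sweedler leg in its correct component algebra --- so there is no genuine obstacle; the standing bijectivity of the maps $S_{X,Y}$ enters only to guarantee that $\sigma$ is an automorphism, so that (iii) and (iii') are genuinely identities of commuting automorphisms.
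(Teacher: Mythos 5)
Your proposal is correct and follows essentially the same route as the paper: parts (i), (ii), (i'), (ii') are the standard single-Hopf-algebra facts (which the paper simply cites from Brown--Zhang, while you write out the routine convolution-inverse argument), and your proof of (iii)/(iii') is exactly the paper's computation, namely $\bdt^X_{X,Y}\bigl(S_{Y,X}(S_{X,Y}(a^{X,Y}))\bigr)=S^2_{X,X}(a_1^{X,X})\ot S_{Y,X}(S_{X,Y}(a_2^{X,Y}))$ obtained from the cogroupoid antipode formula, followed by $\xi S_{X,X}^2=\xi$ (resp.\ $\eta S_{Y,Y}^2=\eta$).
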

\proof Since $\mc{C}(X,X)$ is a Hopf algebra, (i) and (ii) are just Lemma 2.5 in \cite{bz}. (i') and (ii') hold similarly.
We only need to prove (iii), and (iii') can be proved similarly.

For $x\in \mc{C}(X,Y)$,
$$S_{Y,X}\circ S_{X,Y}\circ [\xi]_{X,Y}^l(a^{X,Y})=\xi(a_1^{X,X})S_{Y,X}( S_{X,Y}(a_2^{X,Y})).$$
Since $\bdt_{X,Y}^X(S_{Y,X}( S_{X,Y}(a^{X,Y})))=S^2_{X,X}(a_1^{X,X})\ot S_{Y,X}( S_{X,Y}(a_2^{X,Y}))$,
$$[\xi]_{X,Y}^l\circ S_{Y,X}\circ S_{X,Y}(a^{X,Y})=\xi{S^2_{X,X}}(a_1^{X,X})S_{Y,X}( S_{X,Y}(a_2^{X,Y}))$$
By (ii),   $\xi S_{X,X}^2=\xi$, so
$$S_{Y,X}\circ S_{X,Y}\circ [\xi]^l(a^{X,Y})=[\xi]^l\circ S_{Y,X}\circ S_{X,Y}(a^{X,Y}).$$
Therefore, $S_{Y,X}\circ S_{X,Y}\circ [\xi]_{X,Y}^l=[\xi]_{X,Y}^l\circ S_{Y,X}\circ S_{X,Y}$. \qed

The following is the main result of \cite{yu}.
\begin{prop}\label{prop ga}
Let $\mc{C}$ be a connected  cogroupoid and let $X\in \ob(\mc{C})$ such that $\mc{C}(X,X)$ is a twisted CY algebra of dimension $d$ with left homological integral $\int^l_{\mc{C}(X,X)}=\kk_\xi$,  where $\xi:\mc{C}(X,X)\ra \kk$ is an algebra homomorphism. Then for any $Y\in \ob(\mc{C})$, $\mc{C}(X,Y)$ is a twisted CY algebra of dimension $d$ with Nakayama automorphism $\mu$ defined as $\mu=S_{Y,X}\circ S_{X,Y}\circ [\xi]_{X,Y}^l$. That is,
$$\mu(a)=\xi(a_1^{X,X})S_{Y,X}(S_{X,Y}(a_2^{X,Y})),$$
for any $x\in \mc{C}(X,Y)$.
\end{prop}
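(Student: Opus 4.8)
The plan is to reduce the computation of $\Ext^i_{\mc{C}(X,Y)^e}(\mc{C}(X,Y),\mc{C}(X,Y)^e)$ to something involving $\Ext^i_{\mc{C}(X,X)}({}_\vps\kk,\mc{C}(X,X))$, exactly mirroring how Proposition \ref{prop ascy} and Proposition \ref{prop lrAS} handle the one-object case. First I would observe that since $\mc{C}(X,X)$ is twisted CY of dimension $d$, it is in particular homologically smooth, so by Lemma \ref{lem resyd} (or rather its underlying mechanism, transporting a bounded finitely generated projective resolution of ${}_\vps\kk$ along the cotensor equivalence together with the freeness statements in Section 2.5) the bimodule $\mc{C}(X,Y)$ is homologically smooth over $\mc{C}(X,Y)^e$; this gives part (i) of Definition \ref{defn tcy}. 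Actually one should be careful: the cleanest route is to note that homological smoothness of $\mc{C}(X,Y)$ follows because $\mc{L}_X$ (or $\mc{R}_X$) carries a bounded finitely generated projective resolution of $\mc{C}(X,Y)$ to one of ${}_\vps\kk$, using that $\mc{L}_X(\mc{C}(X,Y)^e)$ is $\mc{C}(X,X)$-free by the explicit isomorphism recalled just before Lemma \ref{lem kk}. The homologically smooth hypothesis on $\mc{C}(X,X)$, via Proposition \ref{prop hom}, provides the bounded finitely generated projective resolution of ${}_\vps\kk$ that we transport backwards.

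Next, for the Ext computation, I would apply Lemma \ref{lem kk}(i): $\Ext^i_{\mc{C}(X,Y)^e}(\mc{C}(X,Y),\mc{C}(X,Y)^e)\cong\Ext^i_{\mc{C}(X,X)}({}_\vps\kk,\mc{L}_X(\mc{C}(X,Y)^e))$, where on the right we use the \emph{right} $\mc{C}(X,Y)^e$-module structure \eqref{lm} on $\mc{C}(X,Y)^e$ (one must keep the left/right bookkeeping consistent, as flagged in the text: $\mc{L}$ uses the left structure for the source and we feed in the right structure). By the explicit isomorphism $\mc{L}_X(\mc{C}(X,Y)^e)\cong {}_*\mc{C}(X,X)\ot\mc{C}(X,Y)$ of left $\mc{C}(X,X)$-modules, the right side becomes $\Ext^i_{\mc{C}(X,X)}({}_\vps\kk,\mc{C}(X,X))\ot\mc{C}(X,Y)$, with the residual $\mc{C}(X,Y)^e$-module structure transported through all the identifications. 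Since $\mc{C}(X,X)$ is twisted CY with $\int^l=\kk_\xi$, we get $\Ext^i=0$ for $i\ne d$ and $\Ext^d_{\mc{C}(X,X)}({}_\vps\kk,\mc{C}(X,X))=\kk_\xi$, hence $\Ext^d_{\mc{C}(X,Y)^e}(\mc{C}(X,Y),\mc{C}(X,Y)^e)\cong\kk_\xi\ot\mc{C}(X,Y)$ as $\mc{C}(X,Y)^e$-modules.

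The final step is to identify $\kk_\xi\ot\mc{C}(X,Y)$ with ${}^1\mc{C}(X,Y)^\mu$ for $\mu=S_{Y,X}\circ S_{X,Y}\circ[\xi]^l_{X,Y}$. Here I would trace the residual $\mc{C}(X,Y)^e$-action through the chain of identifications: the right $\mc{C}(X,Y)^e$-structure \eqref{lm} on $\mc{C}(X,Y)^e$, pushed through $\mc{L}_X$ and then through the isomorphism with ${}_*\mc{C}(X,X)\ot\mc{C}(X,Y)$, should land as (a cogroupoid analogue of) the structure \eqref{HHEr}; tensoring down the $\kk_\xi$ factor then turns the $S^2$-twist into $S_{Y,X}\circ S_{X,Y}$ and the $\xi$-contraction into the left winding automorphism $[\xi]^l_{X,Y}$, exactly as $\mu(h)=\xi(h_1)S^2(h_2)$ arises in the proof of Proposition \ref{prop ascy}(ii)$\Rightarrow$(i). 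That $\mu$ is genuinely an automorphism follows since $[\xi]^l_{X,Y}$ is invertible (Lemma \ref{windingaut}(i), in the form appropriate to $\mc{C}(X,Y)$) and $S_{Y,X}\circ S_{X,Y}$ is an automorphism by the standing bijectivity assumption of Section 2.5. I expect the main obstacle to be precisely this last bookkeeping: carefully carrying the $\mc{C}(X,Y)^e$-bimodule structure through the four-or-five successive isomorphisms without sign/side errors, and checking compatibility of $[\xi]^l$ with $S_{Y,X}\circ S_{X,Y}$ — but Lemma \ref{windingaut}(iii) is stated exactly to supply that commutation, so the pieces are all in place; the rest is a matter of assembling them in the right order. (An alternative to reduce risk: invoke Proposition \ref{prop ga} is not available since that \emph{is} the statement; instead mimic \cite{yu}'s Lemma 2.2 argument directly.)
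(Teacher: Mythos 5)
Your plan follows exactly the route the paper intends for this statement: the paper does not reprove Proposition \ref{prop ga} but quotes it as the main result of \cite{yu}, and the machinery you invoke (Lemma \ref{lem kk}(i), the identification $\mc{L}_X(\mc{C}(X,Y)^e)\cong {}_*\mc{C}(X,X)\ot\mc{C}(X,Y)$, transporting the residual right $\mc{C}(X,Y)^e$-structure as the cogroupoid analogue of \eqref{HHEr}, and reading off $\mu=S_{Y,X}\circ S_{X,Y}\circ[\xi]^l_{X,Y}$ as in Proposition \ref{prop ascy}(ii)$\Rightarrow$(i)) is precisely the toolkit of Section 2.5, which mirrors the paper's own proof of Theorem \ref{thm main} with $\mc{L}_X$ in place of $\mc{R}_Y$. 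So the Ext computation and the identification of the Nakayama automorphism are set up correctly.

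The one step you should repair is the homological smoothness of $\mc{C}(X,Y)$. As phrased, ``smoothness follows because $\mc{L}_X$ carries a bounded finitely generated projective resolution of $\mc{C}(X,Y)$ to one of ${}_\vps\kk$'' is circular: such a resolution of $\mc{C}(X,Y)$ is what must be produced, and $\mc{L}_X$ is a restriction functor, so it goes the wrong way. Your phrase ``transport backwards'' points at the correct mechanism, but you should name it: apply the induction functor $\mc{C}(X,Y)^e\ot_{\mc{C}(X,X)}-$ (the left adjoint of $\mc{L}_X$, along the map $D_1$ of \eqref{mod}) to the bounded finitely generated projective resolution of ${}_\vps\kk$ supplied by Proposition \ref{prop hom}. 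This requires (a) $\mc{C}(X,Y)^e$, with the right $\mc{C}(X,X)$-action $(x\ot y)\la a=xa_1^{X,Y}\ot S_{Y,X}(a_2^{Y,X})y$ obtained by restricting the right regular structure \eqref{lm} along $D_1$, to be free --- the cogroupoid analogue of the freeness of ${}_{H^e}H^e_H$ used in the proof of Lemma \ref{lem rlk}, which follows from bijectivity of the Galois map $x\ot y\mapsto x_1^{X,X}\ot x_2^{X,Y}y$; note that the freeness you cite, that of $\mc{L}_X(\mc{C}(X,Y)^e)$ as a left module, is the adjacent statement used later in the Ext computation, not the one that makes induction exact; and (b) the identification $\mc{C}(X,Y)^e\ot_{\mc{C}(X,X)}{}_\vps\kk\cong\mc{C}(X,Y)$ as $\mc{C}(X,Y)^e$-modules, via multiplication. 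Alternatively, you can argue as in the proof of Proposition \ref{prop hom}(ii)$\Rightarrow$(i): Lemma \ref{lem kk}(i) plus the bounded finitely generated resolution of ${}_\vps\kk$ show that $\Ext^i_{\mc{C}(X,Y)^e}(\mc{C}(X,Y),-)$ vanishes in high degrees and commutes with filtered colimits (since $\mc{L}_X$ is restriction), and then the \cite[Lemma 5.2(c)]{bz}-type criterion yields the bounded finitely generated projective $\mc{C}(X,Y)^e$-resolution. Your initial appeal to Lemma \ref{lem resyd} should simply be dropped, as you yourself note: it needs a relative projective Yetter-Drinfeld resolution, which homological smoothness alone does not provide --- this is exactly why Theorem \ref{thm main1} carries extra hypotheses. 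With the induction step made precise, the rest of your outline assembles into the intended proof.
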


Though we do not say that the CY-dimension of $\mc{C}(X,X)$ and $\mc{C}(X,Y)$ are same in the statement of \cite[Theorem 2.5]{yu}, it is easy to see from its proof.

Let $\mc{C}$ be a cogroupoid. We define a cogroupoid $\mc{C}'$ as follows:
\begin{itemize}
\item $\ob(\mc{C}')=\ob(\mc{C})$.
\item For any objects $Y$, $X$, the algebra $\mc{C}'(Y,X)$ is the algebra $\mc{C}(X,Y)$.
\item For any objects $Y$, $X$ and $Z$, the algebra homomorphism $\bdt'^Z_{YX}:\mc{C}'(Y,X)\ra \mc{C}'(Y,Z)\ot \mc{C}'(Z,X)$ is the algebra homomorphism $\tau\circ\bdt_{XY}^Z:\mc{C}(X,Y)\ra \mc{C}(Z,Y)\ot \mc{C}(X,Z)$ in $\mc{C}$, where $\tau:\mc{C}(X,Z)\ot \mc{C}(Z,Y)\ra \mc{C}(Z,Y)\ot \mc{C}(X,Z)$ is the flip map.
\item For any object $X$, $\vps'_X:\mc{C}'(X,X)\ra \kk$ is the same as $\vps_X:\mc{C}(X,X)\ra \kk$ in $\mc{C}$.
\item For any objects $Y$, $X$, $S'_{Y,X}:\mc{C}'(Y,X)\ra \mc{C}'(X,Y)$ is the morphism $S^{-1}_{Y,X}:\mc{C}(X,Y)\ra \mc{C}(Y,X)$.
\end{itemize}
It is easy to check that this indeed defines a cogroupoid. Now apply Proposition \ref{prop ga} to the cogroupoid $\mc{C}'$, we obtain the following corollary.

\begin{cor}\label{cor ga}
Let $\mc{C}$ be a connected cogroupoid and let $Y\in \ob(\mc{C})$ such that $\mc{C}(Y,Y)$ is a twisted CY algebra of dimension $d$ with left homological integral $\int^l_{\mc{C}(Y,Y)}=\kk_\eta$,  where $\eta:\mc{C}(Y,Y)\ra \kk$ is an algebra homomorphism. Then for any $X\in \ob(\mc{C})$, $\mc{C}(X,Y)$ is a twisted CY algebra of  dimension $d$ with Nakayama automorphism $\mu'$ defined as $\mu'=S^{-1}_{X,Y}\circ S^{-1}_{Y,X}\circ [\eta]_{X,Y}^r$. That is,
$$\mu'(a)=S^{-1}_{X,Y}(S^{-1}_{Y,X}(a_1^{X,Y}))\eta(a_2^{Y,Y}),$$
for any $x\in \mc{C}(X,Y)$.
\end{cor}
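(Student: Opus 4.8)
The plan is to obtain Corollary \ref{cor ga} directly from Proposition \ref{prop ga} by applying that result to the auxiliary cogroupoid $\mc{C}'$ constructed just above the statement, with the roles of the two objects interchanged, and then translating the resulting Nakayama automorphism back into the notation of $\mc{C}$. The point is that Proposition \ref{prop ga} computes the Nakayama automorphism of $\mc{C}(X,Y)$ from the homological integral at $X$, whereas here we want the version using the homological integral at $Y$, and passing to $\mc{C}'$ (where $\mc{C}'(Y,X)=\mc{C}(X,Y)$) swaps these two roles.

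First I would record that $\mc{C}'$ satisfies all our standing hypotheses. Connectedness is immediate, since $\mc{C}'(Y,X)=\mc{C}(X,Y)\neq 0$ for all $X,Y$. The cogroupoid axioms for $\mc{C}'$ reduce to those of $\mc{C}$ after inserting the flip maps $\tau$ and chasing the coassociativity pentagon together with the two antipode squares; this is the routine check the text alludes to. Bijectivity also transfers: $S'_{Y,X}=S^{-1}_{Y,X}$ is bijective because $S_{Y,X}$ is, and the antipode of the Hopf algebra $\mc{C}'(Y,Y)$ is $S^{-1}_{Y,Y}$, again bijective.

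Next I would note that $\mc{C}'(Y,Y)$ has the same underlying algebra and counit as $\mc{C}(Y,Y)$ (only the comultiplication is reversed, so $\mc{C}'(Y,Y)=\mc{C}(Y,Y)^{\mathrm{cop}}$). Hence $\mc{C}'(Y,Y)$ is twisted CY of dimension $d$, and since the left homological integral $\int^l_{\mc{C}(Y,Y)}=\Ext^d_{\mc{C}(Y,Y)}({}_\vps\kk,\mc{C}(Y,Y))$ together with its right module structure depends only on the algebra together with the augmentation $\vps$, we have $\int^l_{\mc{C}'(Y,Y)}=\kk_\eta$ as well. Proposition \ref{prop ga}, applied to $\mc{C}'$ at the object $Y$, then yields: for every $X\in\ob(\mc{C})$, the algebra $\mc{C}'(Y,X)$ is twisted CY of dimension $d$ with Nakayama automorphism $S'_{X,Y}\circ S'_{Y,X}\circ[\eta]^l_{Y,X}$, the latter computed inside $\mc{C}'$.

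Finally I would translate back. Since $\mc{C}'(Y,X)=\mc{C}(X,Y)$, the first part is exactly the assertion that $\mc{C}(X,Y)$ is twisted CY of dimension $d$. For the automorphism, $S'_{X,Y}\circ S'_{Y,X}=S^{-1}_{X,Y}\circ S^{-1}_{Y,X}$ is immediate from the definition of $\mc{C}'$, while $\bdt'^{Y}_{Y,X}=\tau\circ\bdt^{Y}_{X,Y}$ sends $a$ to $a_2^{Y,Y}\ot a_1^{X,Y}$, so $[\eta]^l_{Y,X}(a)=\eta(a_2^{Y,Y})a_1^{X,Y}=a_1^{X,Y}\eta(a_2^{Y,Y})=[\eta]^r_{X,Y}(a)$. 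Composing gives $\mu'=S^{-1}_{X,Y}\circ S^{-1}_{Y,X}\circ[\eta]^r_{X,Y}$, and evaluating gives $\mu'(a)=S^{-1}_{X,Y}(S^{-1}_{Y,X}(a_1^{X,Y}))\,\eta(a_2^{Y,Y})$, as stated. I expect the only points requiring genuine care to be the bookkeeping that identifies the left winding automorphism over $\mc{C}'$ with the right winding automorphism over $\mc{C}$ through the reversed comultiplication, and the observation that the hypothesis on $\int^l$ is insensitive to replacing $\mc{C}(Y,Y)$ by $\mc{C}(Y,Y)^{\mathrm{cop}}$; neither constitutes a real obstruction.
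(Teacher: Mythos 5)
Your proposal is correct and is exactly the paper's own argument: the paper defines the cogroupoid $\mc{C}'$ precisely so that Corollary \ref{cor ga} follows by applying Proposition \ref{prop ga} to $\mc{C}'$ at the object $Y$ and translating back. Your additional checks (connectedness and bijectivity of the $S'_{\bullet,\bullet}$, invariance of the twisted CY property and of $\int^l$ under passing to $\mc{C}(Y,Y)^{\mathrm{cop}}$, and the identification of the left winding automorphism over $\mc{C}'$ with $[\eta]^r_{X,Y}$) are just the routine verifications the paper leaves implicit.
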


\begin{thm}\label{thm main}
Let $\mc{C}$ be a connected cogroupoid and let  $X$ be an object in $\mc{C}$ such that $\mc{C}(X,X)$ is a twisted CY Hopf algebra of dimension $d$. Then for any $Y\in\ob(\mc{C})$ such that $\mc{C}(Y,Y)$ is homologically smooth,  $\mc{C}(Y,Y)$ is a twisted CY algebra of dimension $d$ as well.
\end{thm}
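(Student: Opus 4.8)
The plan is to push the homological information about $\mc{C}(X,X)$ through the biGalois object $\mc{C}(X,Y)$ and then apply the intrinsic criterion for twisted Calabi--Yau Hopf algebras, Proposition \ref{prop ascy}.

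Since $\mc{C}(X,X)$ is twisted CY of dimension $d$, Proposition \ref{prop ga} shows that $\mc{C}(X,Y)$ is twisted CY of dimension $d$; in particular it is homologically smooth and, writing $\mu$ for its Nakayama automorphism,
\[
\Ext^i_{\mc{C}(X,Y)^e}\bigl(\mc{C}(X,Y),\mc{C}(X,Y)^e\bigr)\cong
\begin{cases}0,& i\neq d,\\ {}^1\mc{C}(X,Y)^{\mu},& i=d,\end{cases}
\]
as $\mc{C}(X,Y)^e$-modules. The first step is to rewrite the left-hand side over $\mc{C}(Y,Y)$. By Lemma \ref{lem kk}(iii) applied to the left regular module $M=\mc{C}(X,Y)^e$, it is isomorphic to $\Ext^i_{\mc{C}(Y,Y)}\bigl({}_\vps\kk,\mc{L}_Y(\mc{C}(X,Y)^e)\bigr)$, and by the explicit description recalled in Section 2.5, $\mc{L}_Y(\mc{C}(X,Y)^e)\cong{}_*\mc{C}(Y,Y)\ot\mc{C}(X,Y)$ is a \emph{free} left $\mc{C}(Y,Y)$-module. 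Because $\mc{C}(Y,Y)$ is homologically smooth, ${}_\vps\kk$ admits a bounded projective resolution with finitely generated terms (Proposition \ref{prop hom}), so $\Ext^i_{\mc{C}(Y,Y)}({}_\vps\kk,-)$ commutes with direct sums; hence
\[
\Ext^i_{\mc{C}(Y,Y)}({}_\vps\kk,\mc{C}(Y,Y))\ot_\kk\mc{C}(X,Y)\;\cong\;\Ext^i_{\mc{C}(X,Y)^e}\bigl(\mc{C}(X,Y),\mc{C}(X,Y)^e\bigr).
\]
As $\mc{C}$ is connected, $\mc{C}(X,Y)\neq0$, so the right-hand side vanishing for $i\neq d$ forces $\Ext^i_{\mc{C}(Y,Y)}({}_\vps\kk,\mc{C}(Y,Y))=0$ for $i\neq d$ and nonzero for $i=d$. (Lemma \ref{lem kk}(iv) gives, symmetrically, the analogous statement for $\Ext^i_{\mc{C}(Y,Y)^{op}}(\kk_\vps,\mc{C}(Y,Y))$.)

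It remains to prove that $N:=\Ext^d_{\mc{C}(Y,Y)}({}_\vps\kk,\mc{C}(Y,Y))$ is one-dimensional. Granting this, Proposition \ref{prop ascy}(ii) — applicable since $\mc{C}(Y,Y)$ is homologically smooth with bijective antipode — gives at once that $\mc{C}(Y,Y)$ is twisted CY of dimension $d$, with $\gldim\mc{C}(Y,Y)=d$ by the last clause of that proposition. To pin down $\dim_\kk N$ I would redo the isomorphisms above keeping track of the $\mc{C}(X,Y)^e$-module structure (the cogroupoid analogue of the identification $\Ext^d_{H^e}(H,H^e)\cong\kk_\xi\ot H\cong H^\mu$ used in the proof of Proposition \ref{prop ascy}), obtaining $N\ot_\kk\mc{C}(X,Y)\cong{}^1\mc{C}(X,Y)^{\mu}$ as $\mc{C}(X,Y)$-bimodules. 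The right-hand side is cyclic — indeed free of rank one on either side — which already forces $\dim_\kk N<\infty$, and then $\dim_\kk N=1$ once one checks that the left-hand side is, as a one-sided $\mc{C}(X,Y)$-module, free of rank $\dim_\kk N$ and invokes the invariant basis number property of the twisted CY algebra $\mc{C}(X,Y)$.

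I expect the only real obstacle to be this last step: transporting the bimodule (not merely vector-space) structure through Lemma \ref{lem kk}(iii) together with the free-module description of $\mc{L}_Y(\mc{C}(X,Y)^e)$, and then justifying the rank comparison over the possibly non-Noetherian algebra $\mc{C}(X,Y)$. A cleaner alternative would exploit that $\mc{C}(X,Y)$ is a $\mc{C}(X,X)$-$\mc{C}(Y,Y)$-biGalois object, so $\mc{C}(X,Y)^{\mathrm{co}\,\mc{C}(Y,Y)}=\kk$: if the isomorphism $N\ot_\kk\mc{C}(X,Y)\cong\mc{C}(X,Y)^{\mu}$ can be arranged to be right $\mc{C}(Y,Y)$-colinear with $N$ carrying the trivial coaction, then passing to $\mc{C}(Y,Y)$-coinvariants gives $N\cong\kk$ immediately. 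Everything else — the reduction via Proposition \ref{prop ga}, the $\Ext$-comparison via Lemma \ref{lem kk}, and the final appeal to Proposition \ref{prop ascy} — is formal.
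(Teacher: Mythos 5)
Your reduction is the same as the paper's: apply Proposition \ref{prop ga} to get that $\mc{C}(X,Y)$ is twisted CY of dimension $d$, transfer its Hochschild cohomology to $\mc{C}(Y,Y)$ via Lemma \ref{lem kk} (you use part (iii) and $\mc{L}_Y$, the paper uses part (iv) and $\mc{R}_Y$ — immaterial by symmetry), identify $\mc{L}_Y(\mc{C}(X,Y)^e)$ with the free module ${}_*\mc{C}(Y,Y)\ot\mc{C}(X,Y)$, and use the homological smoothness of $\mc{C}(Y,Y)$ so that $\Ext^i_{\mc{C}(Y,Y)}({}_\vps\kk,-)$ commutes with the direct sum. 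The gap is in your last step. From $N\ot\mc{C}(X,Y)\cong\mc{C}(X,Y)^{\mu}$ you want $\dim_\kk N=1$ by invoking ``the invariant basis number property of the twisted CY algebra $\mc{C}(X,Y)$''. That property is not justified: $\mc{C}(X,Y)$ is a Hopf--(bi)Galois object, in general non-Noetherian and without any algebra map to $\kk$ (a nontrivial Galois object admits no character), and neither homological smoothness nor the twisted CY condition is known to force IBN. The paper is explicitly cautious at exactly this point (``note that we do not know whether $C(X,Y)$ has the FBN property'') and deduces only $0<\dim_\kk N<\infty$ from the isomorphism of free right modules. Your proposed alternative — arranging the isomorphism to be right $\mc{C}(Y,Y)$-colinear and passing to coinvariants — would likewise require a real argument that the whole chain of $\Ext$ identifications is colinear, which you have not supplied.

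The good news is that you do not need $\dim_\kk N=1$ at all, and this is precisely how the paper closes the argument: it appeals to Proposition \ref{prop ascy}(iv) rather than (ii). Running your computation once with $\mc{L}_Y$ and once with $\mc{R}_Y$ (as you indicate) gives that both $\Ext^i_{\mc{C}(Y,Y)}({}_\vps\kk,\mc{C}(Y,Y))$ and $\Ext^i_{\mc{C}(Y,Y)^{op}}(\kk_\vps,\mc{C}(Y,Y))$ vanish for $i\neq d$ and are finite dimensional and nonzero in degree $d$ (finite dimensionality by your own observation that an infinite-rank free module cannot be isomorphic to the cyclic module $\mc{C}(X,Y)$). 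These are exactly the hypotheses of Proposition \ref{prop ascy}(iv); one-dimensionality in degree $d$ then comes out of the Ischebeck spectral sequence argument inside that proposition, so the delicate bimodule bookkeeping and the rank comparison over $\mc{C}(X,Y)$ can be dropped entirely.
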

\proof Let $Y$ be an object in $\mc{C}$ such that $\mc{C}(Y,Y)$ is homologically smooth. We need to compute
 the Hochschild cohomology of $\mc{C}(X,Y)$. By Lemma \ref{lem kk},
$$\Ext^i_{\mc{C}(X,Y)^e}(\mc{C}(X,Y),\mc{C}(X,Y)^e)\cong\Ext^i_{\mc{C}(Y,Y)^{op}}(\kk_\vps,\mc{R}_Y(\mc{C}(X,Y)^e))$$
for all $i\le 0$. $\mc{R}_Y(\mc{C}(X,Y)^e)$ is a $\mc{C}(X,Y)^e$-$\mc{C}(Y,Y)$-bimodule. The right $\mc{C}(Y,Y)$-module isomorphism
$$\begin{array}{ccl}\mc{R}_Y(\mc{C}(X,Y)^e) &\lra &\mc{C}(Y,Y)_*\ot \mc{C}(X,Y)\\x\ot y &\longmapsto& x_2^{Y,Y} \ot S _{Y,X}(S _{X,Y}(x_1^{X,Y})) y,\end{array}$$
is also an isomorphism of left $\mc{C}(X,Y)^e$-modules if we endow a left $\mc{C}(X,Y)^e$-module structure on $\mc{C}(Y,Y)_*\ot \mc{C}(X,Y)$ as follows:
$$(a\ot b)\ra (x\ot y)=a^{Y,Y}_2x\ot S_{Y,X}(S_{X,Y}(a_1^{X,Y}))yb,$$
for any   $x\ot y\in \mc{C}(Y,Y)\ot \mc{C}(X,Y)$ and $a\ot b\in \mc{C}(X,Y)^e$.
Therefore, we obtain  the following left $\mc{C}(X,Y)^e$-module isomorphisms:
$$\begin{array}{rcl}\Ext^i_{\mc{C}(X,Y)^e}(\mc{C}(X,Y),\mc{C}(X,Y)^e)&\cong&\Ext^i_{\mc{C}(Y,Y)^{op}}(\kk_\vps,\mc{R}_Y(\mc{C}(X,Y)^e))\\
&\cong& \Ext^i_{\mc{C}(Y,Y)^{op}}(\kk_\vps,\mc{C}(Y,Y)_*\ot \mc{C}(X,Y))\\
&\cong&\Ext^i_{\mc{C}(Y,Y)^{op}}(\kk_\vps,\mc{C}(Y,Y))\ot \mc{C}(X,Y)
\end{array}$$
for $i\le 0$.  The left $\mc{C}(X,Y)^e$-module structure on $\Ext^i_{\mc{C}(Y,Y)^{op}}(\kk_\vps,\mc{C}(Y,Y))\ot \mc{C}(X,Y)$ induced by the isomorphisms above is given by
$$(a\ot b)\ra (x\ot y)=a^{Y,Y}_2x\ot S_{Y,X}(S_{X,Y}(a_1^{X,Y}))yb,$$
for any   $x\ot y\in \Ext^i_{\mc{C}(Y,Y)^{op}}(\kk_\vps,\mc{C}(Y,Y))\ot \mc{C}(X,Y)$ and $a\ot b\in \mc{C}(X,Y)^e$.
Note that the left $\mc{C}(Y,Y)$-module structure of $\mc{C}(Y,Y)$ induces a left $\mc{C}(Y,Y)$-module structure on $\Ext^i_{\mc{C}(Y,Y)^{op}}(\kk_\vps,\mc{C}(Y,Y))$.

It follows from Proposition \ref{prop ga}  that $\mc{C}(X,Y)$ is a twisted CY algebra of dimension $d$ with Nakayama automorphism $\mu=S_{Y,X}\circ S_{X,Y}\circ [\xi]_{X,Y}^l$. So
 $$\Ext^i_{\mc{C}(X,Y)^e}(\mc{C}(X,Y),\mc{C}(X,Y)^e)=\begin{cases}0&i\neq d;\\\mc{C}(X,Y)^\mu&i=d.\end{cases}$$
Now we arrive at the isomorphism of left $\mc{C}(X,Y)^e$-modules
$$\Ext^i_{\mc{C}(Y,Y)^{op}}(\kk_\vps,\mc{C}(Y,Y))\ot \mc{C}(X,Y)\cong \begin{cases}0&i\neq d;\\
\mc{C}(X,Y)^\mu&i=d.\end{cases}$$
A left  $\mc{C}(X,Y)^e$-module  can be viewed as a $\mc{C}(X,Y)$-bimodule. The right module structure of $\Ext^i_{\mc{C}(Y,Y)^{op}}(\kk_\vps,\mc{C}(Y,Y))\ot \mc{C}(X,Y)$ is just the right multiplication to the factor $\mc{C}(X,Y)$. So especially, as right $\mc{C}(X,Y)$-modules,
$$\Ext^i_{\mc{C}(Y,Y)^{op}}(\kk_\vps,\mc{C}(Y,Y))\ot \mc{C}(X,Y)\cong \begin{cases}0&i\neq d;\\
\mc{C}(X,Y)&i=d.\end{cases}$$
This shows that $\Ext^i_{\mc{C}(Y,Y)^{op}}(\kk_\vps,\mc{C}(Y,Y))=0$ for $i\neq d$. Moreover, for degree $d$, we denote $V=\Ext^d_{\mc{C}(Y,Y)^{op}}(\kk_\vps,\mc{C}(Y,Y))$. Then $V\otimes C(X,Y)\cong C(X,Y)$ as free right $C(X,Y)$-modules. Hence $0<\dim V<\infty$ (note that we do not know whether $C(X,Y)$ has the FBN property). Similarly, $\Ext^i_{\mc{C}(Y,Y)}({}_\vps\kk,\mc{C}(Y,Y))=0$ for $i\neq d$ and $\Ext^d_{\mc{C}(Y,Y)}({}_\vps\kk,\mc{C}(Y,Y))$ is finite dimensional as well. Hence $\mc{C}(Y,Y)$ is twisted CY of dimension $d$ by Proposition \ref{prop ascy}. \qed

\begin{thm}\label{thm main'}
Let $H$ and $L$ be two  monoidally Morita-Takeuchi equivalent Hopf algebras. If  $H$ is twisted CY of dimension $d$ and $L$ is homologically smooth, then $L$ is twisted CY of dimension $d$ as well.
\end{thm}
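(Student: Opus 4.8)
The theorem is simply the restatement of Theorem \ref{thm main} in the language of Hopf algebras rather than cogroupoids, so the plan is to translate the hypotheses into the cogroupoid picture and invoke that result. Under the standing assumption of Section 2.4, both $H$ and $L$ have bijective antipode.

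First I would use Theorem \ref{thm ga}: since $\mc{M}^H \cong^\ot \mc{M}^L$ as monoidal categories, there is a connected cogroupoid $\mc{C}$ with $\ob(\mc{C}) = \{X,Y\}$ such that $H = \mc{C}(X,X)$ and $L = \mc{C}(Y,Y)$. Because $\mc{C}(X,X) = H$ is a Hopf algebra with bijective antipode, Remark 2.6 in \cite{yu} gives that $S_{X,Y}$ is bijective for all objects of $\mc{C}$, so $\mc{C}$ meets the standing hypotheses of Section 2.4 and the functors $\mc{L}_X, \mc{R}_X, \mc{L}_Y, \mc{R}_Y$ together with the winding automorphisms are all at our disposal.

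Next I would apply Theorem \ref{thm main} to this cogroupoid with $X$ and $Y$ as above: $\mc{C}(X,X) = H$ is twisted CY of dimension $d$, and $\mc{C}(Y,Y) = L$ is homologically smooth by hypothesis, hence Theorem \ref{thm main} concludes that $\mc{C}(Y,Y) = L$ is twisted CY of dimension $d$. This finishes the proof.

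I would note that there is no real obstacle remaining at this level; the content is entirely inside Theorem \ref{thm main}, whose proof (a) identifies, via Lemma \ref{lem kk} and the freeness of $\mc{R}_Y(\mc{C}(X,Y)^e)$ as a right $\mc{C}(Y,Y)$-module, the Hochschild cohomology $\Ext^i_{\mc{C}(X,Y)^e}(\mc{C}(X,Y),\mc{C}(X,Y)^e)$ with $\Ext^i_{\mc{C}(Y,Y)^{op}}(\kk_\vps,\mc{C}(Y,Y)) \ot \mc{C}(X,Y)$ as right $\mc{C}(X,Y)$-modules; (b) computes the left-hand side from the fact that the Hopf-Galois object $\mc{C}(X,Y)$ is itself twisted CY of dimension $d$ with the explicit Nakayama automorphism supplied by Proposition \ref{prop ga}; and (c) reads off that $\Ext^i_{\mc{C}(Y,Y)^{op}}(\kk_\vps,\mc{C}(Y,Y))$ vanishes for $i \ne d$ and is nonzero finite-dimensional for $i = d$, and symmetrically for $\Ext^i_{\mc{C}(Y,Y)}({}_\vps\kk,\mc{C}(Y,Y))$, so that Proposition \ref{prop ascy}(iv) applies to $L = \mc{C}(Y,Y)$ (homologically smooth, bijective antipode). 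The one subtlety worth keeping in mind is that $\mc{C}(Y,Y)$ need not be Noetherian, so one must use the finite-dimensionality criterion of Proposition \ref{prop ascy}(iv) in place of any Noetherian argument; but this is precisely what Theorem \ref{thm main} already handles.
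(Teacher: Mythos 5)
Your proposal is correct and follows exactly the paper's own argument: the paper proves Theorem \ref{thm main'} by combining Theorem \ref{thm ga} (to realize $H=\mc{C}(X,X)$ and $L=\mc{C}(Y,Y)$ in a connected cogroupoid) with Theorem \ref{thm main}. Your additional summary of the internal steps of Theorem \ref{thm main} is accurate but not needed for this statement.
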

\proof This directly follows from Theorem \ref{thm ga} and Theorem \ref{thm main}.

Before we present our main theotem, we need  the following lemma.
\begin{lem}\label{lem res}
Let $H$ be a Noetherian Hopf algebra. Then the trivial Yetter-Drinfeld module $\kk$ admits a finitely generated free Yetter-Drinfeld module resolution.
\end{lem}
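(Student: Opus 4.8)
The plan is to prove, a bit more generally, that any Yetter-Drinfeld module over $H$ which is finitely generated as a right $H$-module is a quotient of a finitely generated free Yetter-Drinfeld module, and then to construct the resolution of $\kk$ one syzygy at a time, using Noetherianity of $H$ to keep the syzygies finitely generated.

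First I would establish the key claim: \emph{if $M\in\mc{YD}^H_H$ is finitely generated as a right $H$-module, then there exist a finite-dimensional right $H$-comodule $V$ and a surjective Yetter-Drinfeld module morphism $V\bxt H\to M$}. Choose $H$-module generators $m_1,\dots,m_n$ of $M$. By the fundamental theorem of comodules each $m_j$ lies in some finite-dimensional subcomodule $W_j\subseteq M$, so $V:=W_1+\cdots+W_n$ is a finite-dimensional subcomodule of $M$ containing every $m_j$. The map $f\colon V\bxt H\to M$, $f(v\ot h)=v\cdot h$, is a morphism in $\mc{YD}^H_H$: $H$-linearity is clear, and $H$-colinearity follows since $(f\ot\id_H)$ applied to the comodule structure $v\ot h\mapsto v_{(0)}\ot h_2\ot S(h_1)v_{(1)}h_3$ of $V\bxt H$ yields $v_{(0)}\cdot h_2\ot S(h_1)v_{(1)}h_3$, which equals $\dt(v\cdot h)$ by the Yetter-Drinfeld compatibility in $M$. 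Its image contains $m_j\cdot H$ for each $j$, hence is all of $M$; and $V\bxt H$ is a finitely generated free Yetter-Drinfeld module because $\dim V<\infty$.

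Then I would build the resolution inductively. Since $\kk$ is one-dimensional it is finitely generated over $H$, so the claim produces a finitely generated free Yetter-Drinfeld module $P_0=V_0\bxt H$ surjecting onto $\kk$. Assume an exact chain $P_i\to P_{i-1}\to\cdots\to P_0\to\kk\to 0$ has been built with all $P_j$ finitely generated free Yetter-Drinfeld modules. Put $K_i=\Ker(P_i\to P_{i-1})$ (with $P_{-1}:=\kk$); it is a Yetter-Drinfeld submodule of $P_i$. Because $P_i$ is free of finite rank as a right $H$-module and $H$ is Noetherian, $K_i$ is finitely generated as a right $H$-module, so the claim yields a finitely generated free Yetter-Drinfeld module $P_{i+1}$ together with a surjection $P_{i+1}\to K_i$; composing with the inclusion $K_i\hookrightarrow P_i$ gives the next differential $P_{i+1}\to P_i$ with image $K_i$, so exactness is maintained. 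This produces the required finitely generated free Yetter-Drinfeld module resolution of $\kk$.

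The one place that needs genuine care is the key claim, where one must merge the $H$-module finiteness of $M$ with the (automatic) local finiteness of its underlying comodule to extract a \emph{single} finite-dimensional subcomodule of $M$ that still generates $M$ over $H$; verifying that the covering map is a Yetter-Drinfeld morphism, and then propagating finite generation of the syzygies through Noetherianity, are routine.
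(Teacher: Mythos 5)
Your proposal is correct and follows essentially the same route as the paper: cover a module-finitely-generated Yetter-Drinfeld module by $V\bxt H$ for a finite-dimensional subcomodule $V$ containing a generating set (using local finiteness of comodules), check the evaluation map is a Yetter-Drinfeld morphism via the compatibility condition, and iterate on kernels using Noetherianity. The only cosmetic difference is that you isolate this as a general covering claim, while the paper carries it out directly for $\kk$ and its successive syzygies.
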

\proof First we have an epimorphism $\epsilon:\kk\bxt H\ra \kk$, $1\ot h\mapsto \vps(h)$ of Yetter-Drinfeld modules. Set $P_0=\kk\bxt H$. Since $H$ is Noetherian, $\Ker \epsilon$ is finitely generated as a module over $H$. Say it is generated by a finite dimensional subspace $V_1$ of $P_0$. That is, there exists an epimorphism $V_1\ot H\ra \Ker \epsilon\ra 0$ given by $v\otimes h\mapsto vh$ for any $v\in V_1$ and $h\in H$. Let $C_1$ be the subcomodule of $\Ker \epsilon$ generated by $V_1$. We know $C_1$ is finite dimensional since $V_1$ is finite dimensional by the fundamental theory of comodules. Construct the epimorphism $C_1\bxt H\ra \Ker \epsilon\ra 0$ via $c\ot h\mapsto ch$ for any $c\in C_1$ and $h\in H$. It is easy to check that it is a morphism of Yetter-Drinfeld modules.  Set $P_1=C_1\bxt H$, we have the exact sequence
$P_1\ra P_0\ra \kk\ra0$. Note that $P_1$ is again a Noetherian $H$-module. Hence we can do the procedure recursively to obtain  a finitely generated free Yetter-Drinfeld module resolution of $\kk$. \qed

\begin{thm}\label{thm main1}
Let $H$ be a twisted CY Hopf algebra of dimension $d$, and  $L$  a Hopf algebra monoidally Morita-Takeuchi equivalent to $H$.  If one of the following conditions holds,  then $L$ is also twisted CY of dimension $d$.
\begin{enumerate}
\item[(i)] $H$ admits a finitely generated relative projective Yetter-Drinfeld module resolution for the trivial Yetter-Drinfeld module $\kk$ and $L$ has finite global dimension.
\item[(ii)] $H$ admits a bounded finitely generated relative projective Yetter-Drinfeld module resolution for the trivial Yetter-Drinfeld module $\kk$.
\item[(iii)] $H$ is Noetherian and $L$ has finite global dimension.
\item[(iv)] $L$ is Noetherian and has finite global dimension.
\end{enumerate}
\end{thm}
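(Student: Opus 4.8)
The plan is to reduce everything to the homological smoothness of $L$ and then invoke Theorem~\ref{thm main'}. By Theorem~\ref{thm ga} there is a connected cogroupoid $\mc{C}$ with objects $X,Y$ such that $H=\mc{C}(X,X)$ and $L=\mc{C}(Y,Y)$, and once we know that $L$ is homologically smooth, Theorem~\ref{thm main'} immediately yields that $L$ is twisted CY of dimension $d$. So in each of the four cases it suffices to produce, in the sense of Proposition~\ref{prop hom}, a bounded resolution of the trivial right $L$-module $\kk_\vps$ by finitely generated projective $L$-modules.

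For (i) and (ii) I would apply the monoidal equivalence $-\sq_{\mc{C}(X,X)}\mc{C}(X,Y)$. By Lemma~\ref{lem resyd} it carries a finitely generated relative projective Yetter-Drinfeld module resolution of $\kk$ over $H$ to a finitely generated relative projective Yetter-Drinfeld module resolution of $\kk$ over $L$, and preserves boundedness; moreover a finitely generated relative projective Yetter-Drinfeld module, being a direct summand of some $V\bxt L$ with $\dim V<\infty$, is a finitely generated projective right $L$-module. Thus we obtain a finitely generated projective resolution $\cdots\ra P_1\ra P_0\ra\kk_\vps\ra 0$. In case (ii) this resolution is already bounded, so $L$ is homologically smooth. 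In case (i) it may be unbounded, but $\gldim(L)=:n<\infty$, and by Proposition~\ref{prop dim} we have $\projdim\kk_\vps=n$, so the $n$-th syzygy $K=\Im(P_n\ra P_{n-1})$ is projective; it is finitely generated since it is a quotient of $P_n$. Truncating yields the bounded finitely generated projective resolution
\[0\ra K\ra P_{n-1}\ra\cdots\ra P_0\ra\kk_\vps\ra 0,\]
so $L$ is homologically smooth.

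For (iii) and (iv) I would first apply Lemma~\ref{lem res}. In case (iii), $H$ is Noetherian, so $\kk$ admits a finitely generated \emph{free} Yetter-Drinfeld module resolution over $H$; since free Yetter-Drinfeld module resolutions are in particular relative projective ones, this puts us back in the situation of case (i) (recall $L$ has finite global dimension). In case (iv), $L$ itself is Noetherian, so Lemma~\ref{lem res} directly produces a finitely generated free, hence finitely generated projective, resolution of $\kk_\vps$ over $L$; as $\gldim(L)<\infty$, the same syzygy truncation as above makes it bounded. In every case $L$ is homologically smooth, and Theorem~\ref{thm main'} completes the argument.

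The substantive input is really Theorem~\ref{thm main'} (through Theorem~\ref{thm main}), where the twisted CY identity for $\mc{C}(X,Y)$ is transported across the cogroupoid via the functors $\mc{R}_Y,\mc{L}_Y$ and the free-module descriptions of $\mc{R}_Y(\mc{C}(X,Y)^e)$ and its companions; granting that, the only remaining obstacle is organizational — tracking finite generation and boundedness through the cotensor functor, and noticing that it is precisely finite global dimension (not Noetherianness) that powers the syzygy truncation. It is worth remarking that in case (iv) the Morita--Takeuchi equivalence is not used to establish homological smoothness of $L$ at all; that follows from Noetherianness together with finite global dimension, and the equivalence enters only through Theorem~\ref{thm main'}.
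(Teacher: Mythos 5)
Your proposal is correct and follows essentially the same route as the paper: reduce everything to homological smoothness of $L$ via Proposition \ref{prop hom}, transport the relative projective Yetter-Drinfeld resolution through the cotensor functor using Lemma \ref{lem resyd}, truncate at the $n$-th syzygy using finite global dimension for (i), and use Lemma \ref{lem res} for the Noetherian cases before invoking Theorem \ref{thm main'}. The only (harmless) divergence is in case (iv), where the paper simply cites \cite[Lemma 5.2]{bz} while you rerun the Lemma \ref{lem res} plus truncation argument directly on $L$; both are valid.
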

\proof By Theorem \ref{thm main}, we only need to prove that if one of the conditions listed in the Theorem holds, then $L$ is homologically smooth.

(i) We use the language of cogroupoids. Since $H$ and $L$ are monoidally Morita-Takeuchi equivalent, there exists a connected cogroupoid with 2 objects $X,Y$ such that $H=\mc{C}(X,X)$ and $L=\mc{C}(Y,Y)$ (Theorem \ref{thm ga}). By Lemma \ref{prop hom}, to show $L=\mc{C}(Y,Y)$ is homologically smooth,  we only need to show that the trivial module $\kk_\vps$ admits a bounded projective resolution with each term finitely generated. By assumption, the trivial Yetter-Drinfeld module $\kk$ over the Hopf algebra $H=\mc{C}(X,X)$ admits a finitely generated relative projective Yetter-Drinfeld module resolution
\begin{equation}\label{eq 3}\cdots \ra P_i\xra{\dt_i} P_{i-1}\ra \cdots\ra P_1\ra P_0\ra \kk\ra 0.\end{equation}

By Lemma \ref{lem resyd},
\begin{equation}\label{eq 6}\cdots \ra P_i\sq_{\mc{C}(X,X)}\mc{C}(X,Y)\xra{\dt_i\sq\mc{C}(X,Y)} P_{i-1}\sq_{\mc{C}(X,X)}\mc{C}(X,Y)\ra \cdots\hspace{12mm}\end{equation}
$$\hspace{18mm}\ra P_1\sq_{\mc{C}(X,X)}\mc{C}(X,Y)\ra P_0\sq_{\mc{C}(X,X)}\mc{C}(X,Y)\ra \kk\ra 0.$$
is a finitely generated relative projective Yetter-Drinfeld module resolution of the trivial Yetter-Drinfeld module $\kk$ over  $\mc{C}(Y,Y)$.
Hence, each $P_i\sq_{\mc{C}(X,X)}\mc{C}(X,Y)$ is a finite generated projective $\mc{C}(Y,Y)$-module.  By assumption, the global dimension of $\mc{C}(Y,Y)$ is finite, say $n$. Set $K_n=\Ker(\dt_{n-1}\sq_{\mc{C}(X,X)}\mc{C}(X,Y))$. Following from Lemma 4.1.6 in \cite{we}, $K_n$ is projective, so it is a direct summand of $P_n\sq_{\mc{C}(X,X)}\mc{C}(X,Y)$. Since $P_n\sq_{\mc{C}(X,X)}\mc{C}(X,Y)$ is finitely generated, $K_n$ is finitely generated as well.
Therefore, $$0\ra K_n\ra P_{n-1}\sq_{\mc{C}(X,X)}\mc{C}(X,Y)\ra \cdots\hspace{25mm}$$
$$\hspace{10mm}\ra P_1\sq_{\mc{C}(X,X)}\mc{C}(X,Y)\ra P_0\sq_{\mc{C}(X,X)}\mc{C}(X,Y)\ra \kk\ra 0$$
is a bounded projective resolution with each term finitely generated. Hence, $L=\mc{C}(Y,Y)$ is homologically smooth.

(ii) It can be proved by using the similar argument in (i) since equations \eqref{eq 3} and \eqref{eq 6} now are bounded finitely generated projective resolutions for $\kk$.

%

(iii) It is a direct consequence of Lemma \ref{lem res} and (i).

(iv) The Hopf algebra $L$ is homologically smooth in this case follows from \cite[Lemma 5.2]{bz}. \qed

\begin{cor}\label{main cor}
Let $H$ and $L$ be two  monoidally Morita-Takeuchi equivalent Hopf algebras.  If both $H$ and $L$ are twisted CY, then $\gldim(H)=\gldim(L)$.
\end{cor}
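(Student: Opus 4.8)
The plan is to obtain this as a short consequence of Theorem \ref{thm main'}, once one records two elementary facts: that the Calabi-Yau dimension of a twisted CY algebra is well defined, and that for a homologically smooth Hopf algebra (with bijective antipode) the CY dimension equals the global dimension, which is exactly the last assertion of Proposition \ref{prop ascy}.

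Concretely, I would write $d_H$ and $d_L$ for the CY dimensions of $H$ and $L$. These are unambiguous: if an algebra $A$ is twisted CY of dimension $d$, then $\Ext^i_{A^e}(A,A^e)$ vanishes for $i\neq d$ and is isomorphic to $A^\mu$, which is nonzero, for $i=d$; hence $d$ is forced to be the unique degree in which this bimodule Ext is nonzero, and in particular an algebra cannot be twisted CY of two distinct dimensions. Now, since $L$ is twisted CY it is in particular homologically smooth (Definition \ref{defn tcy}(i)), so Theorem \ref{thm main'} applies to the pair $(H,L)$ and yields that $L$ is twisted CY of dimension $d_H$. Comparing with its a priori CY dimension $d_L$ and invoking the uniqueness just noted, $d_H=d_L$. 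Finally, $H$ and $L$ are twisted CY Hopf algebras with bijective antipode (the latter by the blanket hypothesis on antipodes adopted earlier in the paper), so Proposition \ref{prop ascy} gives $\gldim(H)=d_H$ and $\gldim(L)=d_L$; therefore $\gldim(H)=\gldim(L)$.

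If one prefers to stay inside the cogroupoid formalism, the same argument runs as follows: realize $H=\mc{C}(X,X)$ and $L=\mc{C}(Y,Y)$ for a connected cogroupoid with two objects $X,Y$ (Theorem \ref{thm ga}); then Proposition \ref{prop ga} presents the Hopf-Galois object $\mc{C}(X,Y)$ as twisted CY of dimension $d_H$, while Corollary \ref{cor ga} presents the very same algebra $\mc{C}(X,Y)$ as twisted CY of dimension $d_L$. Uniqueness of the CY dimension of $\mc{C}(X,Y)$ again forces $d_H=d_L$, and one concludes with Proposition \ref{prop ascy} exactly as above.

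I do not anticipate a genuine obstacle here: essentially all the weight is carried by Theorem \ref{thm main'} (equivalently by Proposition \ref{prop ga} together with Corollary \ref{cor ga}) and by the global-dimension statement in Proposition \ref{prop ascy}. The only two points that deserve an explicit sentence are the uniqueness of the CY dimension, which is immediate from Definition \ref{defn tcy}, and the verification that Theorem \ref{thm main'} is applicable, i.e. that one of the two Hopf algebras is homologically smooth — and this is automatic, since every twisted CY algebra is homologically smooth by definition.
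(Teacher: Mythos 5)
Your proposal is correct and follows essentially the same route as the paper: apply the main transfer theorem (the paper quotes Theorem \ref{thm main1}, but the content used is exactly Theorem \ref{thm main'}, applicable because a twisted CY algebra is homologically smooth by definition) and then identify CY dimension with global dimension via Proposition \ref{prop ascy}. Your explicit remark on the uniqueness of the CY dimension is left implicit in the paper but is a harmless and correct addition.
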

\begin{proof}
It follows from Theorem \ref{thm main1} and the fact that for twisted CY Hopf algebras the CY dimension always equals the global dimension by Proposition \ref{prop ascy}.
\end{proof}

Now we discuss the relation between the homological integrals of $\mc{C}(X,X)$ and $\mc{C}(Y,Y)$ when both of them are twisted CY.
\begin{thm}\label{thm inn}
Let $\mc{C}$ be a connected cogroupoid. If $X$ and $Y$ are two objects such that $\mc{C}(X,X)$ and $\mc{C}(Y,Y)$ are both twisted CY algebras. Then we have
\begin{equation}\label{eq 8}(S_{Y,X}\circ S_{X,Y})^2=[\eta]_{X,Y}^r\circ ([\xi]_{X,Y}^l)^{-1}\circ \gamma,\end{equation}
where $\xi:\mc{C}(X,X)\ra \kk$  and $\eta:\mc{C}(Y,Y)\ra \kk$ are algebra homomorphisms given by the left homological integrals of $\mc{C}(X,X): \int^l_{\mc{C}(X,X)}=\kk_\xi$ and $\mc{C}(Y,Y): \int^l_{\mc{C}(Y,Y)}=\kk_\eta$ respectively, and $\gamma$ is an inner automorphism of $\mc{C}(X,Y)$.
\end{thm}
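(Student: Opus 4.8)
The plan is to compute the Nakayama automorphism of $\mc{C}(X,Y)$ in two different ways and compare the answers. Since $\mc{C}(X,X)$ is twisted CY of dimension $d$, Proposition \ref{prop ga} applies and gives that $\mc{C}(X,Y)$ is twisted CY of dimension $d$ with Nakayama automorphism $\mu = S_{Y,X}\circ S_{X,Y}\circ[\xi]_{X,Y}^l$, equivalently
$$S_{Y,X}\circ S_{X,Y} = \mu\circ([\xi]_{X,Y}^l)^{-1}.$$
Since $\mc{C}(Y,Y)$ is also twisted CY, applying Proposition \ref{prop ga} to the cogroupoid $\mc{C}'$, i.e.\ invoking Corollary \ref{cor ga}, gives that $\mc{C}(X,Y)$ is twisted CY with Nakayama automorphism $\mu' = S^{-1}_{X,Y}\circ S^{-1}_{Y,X}\circ[\eta]_{X,Y}^r = (S_{Y,X}\circ S_{X,Y})^{-1}\circ[\eta]_{X,Y}^r$, equivalently
$$S_{Y,X}\circ S_{X,Y} = [\eta]_{X,Y}^r\circ(\mu')^{-1}.$$
(Both statements attach the same dimension $d$ to $\mc{C}(X,Y)$, as it is a single algebra.)

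Next I would multiply these two formulas, using the second expression for the left-hand factor of the square and the first for the right-hand one:
$$(S_{Y,X}\circ S_{X,Y})^2 = [\eta]_{X,Y}^r\circ(\mu')^{-1}\circ\mu\circ([\xi]_{X,Y}^l)^{-1}.$$
The key observation is that $\mu$ and $\mu'$ are both Nakayama automorphisms of the single algebra $\mc{C}(X,Y)$, so by the uniqueness clause of Definition \ref{defn tcy} they differ by an inner automorphism; hence $\gamma_0 := (\mu')^{-1}\circ\mu$ is inner. This already gives $(S_{Y,X}\circ S_{X,Y})^2 = [\eta]_{X,Y}^r\circ\gamma_0\circ([\xi]_{X,Y}^l)^{-1}$.

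Finally, to reach the exact form in the statement I would push $\gamma_0$ past the winding automorphism: with $\gamma := [\xi]_{X,Y}^l\circ\gamma_0\circ([\xi]_{X,Y}^l)^{-1}$ one has $\gamma_0\circ([\xi]_{X,Y}^l)^{-1} = ([\xi]_{X,Y}^l)^{-1}\circ\gamma$, whence
$$(S_{Y,X}\circ S_{X,Y})^2 = [\eta]_{X,Y}^r\circ([\xi]_{X,Y}^l)^{-1}\circ\gamma,$$
and $\gamma$ is again inner, being conjugate to the inner automorphism $\gamma_0$ (the inner automorphisms form a normal subgroup of the automorphism group of $\mc{C}(X,Y)$). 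I do not expect a genuine obstacle: everything is driven by Proposition \ref{prop ga} and Corollary \ref{cor ga}, and the only points requiring care are the order in which the first powers of $S_{Y,X}\circ S_{X,Y}$ cancel against $\mu$ and $(\mu')^{-1}$, and that conjugation by $[\xi]_{X,Y}^l$ preserves innerness. Lemma \ref{windingaut}(iii),(iii') is not logically needed for this route, although it can be used to rewrite $\gamma$ more explicitly.
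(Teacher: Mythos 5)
Your proposal is correct and takes essentially the same route as the paper: both proofs extract the two Nakayama automorphisms $\mu=S_{Y,X}\circ S_{X,Y}\circ[\xi]_{X,Y}^l$ and $\mu'=S^{-1}_{X,Y}\circ S^{-1}_{Y,X}\circ[\eta]_{X,Y}^r$ of $\mc{C}(X,Y)$ from Proposition \ref{prop ga} and Corollary \ref{cor ga} and then use uniqueness of the Nakayama automorphism up to inner automorphisms. The only cosmetic difference is in the final rearrangement: the paper commutes $[\xi]_{X,Y}^l$ past $S_{Y,X}\circ S_{X,Y}$ via Lemma \ref{windingaut}(iii), whereas you instead conjugate the inner automorphism by $[\xi]_{X,Y}^l$ and appeal to normality of the inner automorphism group, which is equally valid and lands directly on the stated form of \eqref{eq 8}.
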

\proof From Proposition \ref{prop ga} and Corollary \ref{cor ga}, it is easy to see that the  CY-dimensions of $\mc{C}(X,X)$ and $\mc{C}(Y,Y)$ are equal. Moreover, the Nakayama automorphisms of $\mc{C}(X,Y)$ are given by $\mu=S_{Y,X}\circ S_{X,Y}\circ [\xi]^l$ and  $\mu'=S^{-1}_{X,Y}\circ S^{-1}_{Y,X}\circ [\eta]^r$. Since Nakayama automorphisms are unique up to inner automorphisms, thus
$$S_{Y,X}\circ S_{X,Y}\circ [\xi]_{X,Y}^l=S^{-1}_{X,Y}\circ S^{-1}_{Y,X}\circ [\eta]_{X,Y}^r\circ \gamma,$$
for some inner automorphism $\gamma$ of $\mc{C}(X,Y)$. Since $[\xi]_{X,Y}^l\circ S_{Y,X}\circ S_{X,Y}=S_{Y,X}\circ S_{X,Y}\circ [\xi]_{X,Y}^l$, we obtain that
$$(S_{Y,X}\circ S_{X,Y})^2=([\xi]_{X,Y}^l)^{-1}\circ [\eta]_{X,Y}^r\circ \gamma.$$
\qed

\begin{rem}

(i) We concentrate on CY property in this paper, but it is not hard to see that the above theorem holds when $\mc{C}(X,X)$ and $\mc{C}(Y,Y)$ are both AS-Gorenstein.

(ii) The three maps composed to give $(S_{Y,X}\circ S_{X,Y})^2$ in \eqref{eq 8} commute with each other. This can be proved as in \cite[Proposition 4.6]{bz} with the help of Lemma \ref{windingaut}. The equation (\ref{eq 8}) is just (4.6.1) in \cite{bz} when $X=Y$. One deduces at once the main result of \cite{ra}, that is the antipode $S$ has finite order when the Hopf algebra $H$ is finite dimensional. Since the inner automorphism $\gamma=(S_{Y,X}\circ S_{X,Y})^2\circ ([\eta]_{X,Y}^r)^{-1}\circ [\xi]_{X,Y}^l$ is intrinsic in $\mc{C}(X,Y)$, it prompts to generalize \cite[Question 4.6]{bz} to the Hopf-biGalois object $C(X,Y)$ when both $C(X,X)$ and $C(Y,Y)$ are AS-Gorenstein.
 \begin{question}
What is the inner automorphism in Theorem \ref{thm inn}?
\end{question}
\end{rem}

\section{Examples}\label{3}
In this section, we provide some examples.
\subsection{Example 1}
We take the field $\kk$ to be $\mathbb{C}$ in this subsection. Let $E\in \GL_m(\mathbb{C})$ with $m\le 2$ and let $\mc{B}(E)$ be the algebra  presented by generators $(u_{ij})_{1\se i,j\se m}$ and  relations
$$E^{-1}u^tEu=I_m=uE^{-1}u^tE,$$
where $u$ is the matrix $(u_{ij})_{1\se i,j\se m}$, $u^t$ is the transpose of $u$ and $I_m$ is the identity matrix. The algebra $\mc{B}(E)$ is a Hopf algebra and was defined by Dubois-Violette and Launer \cite{dvl} as the quantum automorphism group of the non-degenerate bilinear form associated to $E$. When
$$E=E_q=\left(\begin{array}{cc}0&1\\-q^{-1}&0\end{array}\right),$$
$\mc{B}(E_q)$ is just the algebra $\mc{O}_q(\SL_2(\mathbb{C}))$, which is the quantised coordinate algebra of $\SL_2(\mathbb{C})$.

In order to describe Hopf algebras whose comodule categories are monoidally equivalent to the one of $\mc{B}(E)$, we recall the cogroupoid $\mc{B}$.

Let $E\in \GL_m(\mathbb{C})$ and let  $F\in \GL_n(\mathbb{C})$. The algebra $\mc{B}(E,F)$ is defined to be  the algebra with generators $u_{ij}$, $1\se i\se m$, $1\se j\se n$, subject to the relations:
\begin{equation}\label{alg}F^{-1}u^t E u=I_n;\;\;uF^{-1}u^t E=I_m.\end{equation}
The generators $u_{ij}$ in $\mc{B}(E,F)$ is denoted by $u_{ij}^{EF}$ to express the dependence on $E$ and $F$ when needed.
It is clear that $\mc{B}(E)=\mc{B}(E,E)$.

For any $E\in \GL_m(\mathbb{C})$, $F\in \GL_n(\mathbb{C})$ and $G\in \GL_p(\mathbb{C})$, define the following maps:
\begin{equation}\label{bdt}\begin{array}{rcl}
\bdt_{E,F}^G:\mc{B}(E,F)&\longrightarrow &\mc{B}(E,G)\ot \mc{B}(G,F)\\
u_{ij}&\longmapsto &\sum_{k=1}^p u_{ik}\ot u_{kj},
\end{array}\end{equation}
\begin{equation}\label{bvps}\begin{array}{rcl}
\vps_E:\mc{B}(E)&\longrightarrow &\mathbb{C}\\
u_{ij}&\longmapsto &\dt_{ij},
\end{array}\end{equation}
\begin{equation}\label{bss}\begin{array}{rcl}
S_{E,F}:\mc{B}(E,F)&\longrightarrow &\mc{B}(F,E)^{op}\\
u&\longmapsto &E^{-1}u^tF.
\end{array}\end{equation}
It is clear that $S_{E,F}$ is bijective.

Lemma 3.2 in \cite{bi1} ensures that with these morphisms we have a cogroupoid. The cogroupoid $\mc{B}$  is defined as follows:
\begin{enumerate}
\item[(i)] $\ob(\mc{B})=\{E\in \GL_m(\mathbb{C}),m\le1\}$.
\item[(ii)] For $E,F\in \ob(\mc{B})$, the algebra $\mc{B}(E,F)$ is the algebra defined as in (\ref{alg}).
\item[(iii)] The structural maps $\bdt^\bullet_{\bullet,\bullet}$, $\vps_\bullet$ and $S_{\bullet,\bullet}$ are defined in (\ref{bdt}), (\ref{bvps}) and (\ref{bss}), respectively.
\end{enumerate}

\begin{lem}\rm{(\cite{bi2},\cite[Lemma 3.4]{bi1})}
Let $E\in \GL_m(\mathbb{C})$, $F\in \GL_n(\mathbb{C})$ with $m,n\le2$. Then $\mc{B}(E,F)\neq (0)$ if and only if $\tr(E^{-1}E^t)=\tr(F^{-1}F^t)$.
\end{lem}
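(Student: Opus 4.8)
The plan is to prove the two directions by different methods: the forward implication $\mc{B}(E,F)\neq(0)\Rightarrow\tr(E^{-1}E^t)=\tr(F^{-1}F^t)$ categorically, via the monoidal equivalence that a nonzero Galois object yields, and the converse by a reduction to normal forms.

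For the forward implication, assume $R=\mc{B}(E,F)\neq(0)$. Since each $S_{\bullet,\bullet}$ in $\mc{B}$ is bijective, $\mc{B}(F,E)\neq(0)$ as well, so the sub-cogroupoid of $\mc{B}$ on $\{E,F\}$ is connected; by Theorem~\ref{thm ga} the functor $-\,\sq_{\mc{B}(E)}R$ is then a monoidal equivalence $\mc{M}^{\mc{B}(E)}\cong^\ot\mc{M}^{\mc{B}(F)}$, and a direct computation (as in \cite{bi,bi1}) shows it carries the fundamental comodule $V_E=\mathbb{C}^m$ onto the fundamental comodule $V_F=\mathbb{C}^n$. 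Now $V_E$ is canonically self-dual in $\mc{M}^{\mc{B}(E)}$: the maps $\ev_E\colon V_E\ot V_E\to\mathbb{C}$, $e_i\ot e_j\mapsto E_{ij}$, and $\coev_E\colon\mathbb{C}\to V_E\ot V_E$, $1\mapsto\sum_{i,j}(E^{-1})_{ij}\,e_i\ot e_j$, are comodule morphisms — this is exactly the content of the two defining relations of $\mc{B}(E)$ — and one checks they satisfy both zigzag identities, so the pair is normalized. Hence the endomorphism $e_E:=\coev_E\circ\ev_E$ of $V_E^{\ot2}$ is defined purely categorically, is nonzero, and satisfies $e_E^2=(\ev_E\circ\coev_E)\,e_E=\tr(E^{-1}E^t)\,e_E$, since $\ev_E\circ\coev_E=\sum_{i,j}(E^{-1})_{ij}E_{ij}=\tr(E^{-1}E^t)$. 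Transporting $e_E$ across the equivalence identifies it with the analogous endomorphism $e_F$ of $V_F^{\ot2}$, which likewise satisfies $e_F^2=\tr(F^{-1}F^t)\,e_F$; as $e_F\neq0$ lives in a finite-dimensional $\mathbb{C}$-algebra, the two scalars must coincide.

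For the converse I would first record a change-of-generators principle: for $P\in\GL_m(\mathbb{C})$, $Q\in\GL_n(\mathbb{C})$ and $\lambda,\mu\in\mathbb{C}^\times$, sending the generating matrix $v$ of $\mc{B}(\lambda P^tEP,\mu Q^tFQ)$ to $\nu\,P^{-1}uQ$ with $\nu^2=\mu\lambda^{-1}$ gives — a routine check against the defining relations — an algebra isomorphism onto $\mc{B}(E,F)$, with inverse $u\mapsto\nu^{-1}PvQ^{-1}$; thus $\mc{B}(E,F)$ depends only on the congruence-and-scaling classes of $E$ and $F$. For $2\times2$ matrices these classes are exhausted by the normal forms $E_q$, $q\in\mathbb{C}^\times$, with $\tr(E_q^{-1}E_q^t)=-(q+q^{-1})$, and one has the elementary identity $E_{q^{-1}}=-q\,P_0^tE_qP_0$ with $P_0=\left(\begin{smallmatrix}0&1\\1&0\end{smallmatrix}\right)$. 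So if $m=n=2$ and $\tr(E^{-1}E^t)=\tr(F^{-1}F^t)$, reducing both matrices to normal form gives $\mc{B}(E,F)\cong\mc{B}(E_q,E_{q'})$ with $q,q'$ both roots of $t^2+\tr(E^{-1}E^t)\,t+1$, hence $q'\in\{q,q^{-1}\}$, hence $\mc{B}(E,F)\cong\mc{B}(E_q,E_q)=\mc{B}(E_q)\neq(0)$ (a Hopf algebra has a nonzero counit). The case $m=n=1$ is immediate, as $\mc{B}((e),(f))\cong\mathbb{C}[u]/(u^2-fe^{-1})$ while both sides of the trace identity equal $1$; and when $m\neq n$ one checks directly from the defining relations that $\mc{B}(E,F)=(0)$.

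The real work is concentrated in the forward direction: verifying that $-\,\sq_{\mc{B}(E)}R$ carries the fundamental comodule to the fundamental comodule, so that the categorical scalar is transported correctly, and checking the normalization of $(\ev_E,\coev_E)$ carefully enough that the eigenvalue extracted is exactly $\tr(E^{-1}E^t)$ and not merely its value up to sign. The change-of-generators computation and the classification of $2\times2$ bilinear forms used in the converse are routine.
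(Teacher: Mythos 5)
You should first note that the paper does not prove this lemma at all: it is quoted from Bichon (\cite{bi2}, \cite[Lemma 3.4]{bi1}), so your proposal has to stand on its own, and it has genuine gaps, chiefly in the converse. First, a reading issue with real mathematical consequences: in this paper the macro \verb|\le| prints as $\geqslant$, so the hypothesis is $m,n\geqslant 2$, not $m,n\leq 2$ (compare the introduction, where $\mc{B}(E)$ is considered for $E\in\GL_n$, $n\geqslant 2$). Consequently the case $m\neq n$ is not a degenerate case to be dismissed; it is the heart of the statement, and your claim that the relations force $\mc{B}(E,F)=(0)$ when $m\neq n$ is false: for $E\in\GL_2(\mathbb{C})$, $F\in\GL_3(\mathbb{C})$ with equal traces, $\mc{B}(E,F)$ is a nonzero biGalois object (this is exactly what makes all the $\mc{B}(F)$ monoidally Morita--Takeuchi equivalent to $\mc{O}_q(\SL_2)$). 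Moreover, even in the square case $m=n=2$ your normal-form reduction fails: congruence-and-scaling classes are \emph{not} exhausted by the matrices $E_q$. Congruence $E\mapsto P^tEP$ and scaling preserve the similarity class of the cosquare $E^{-t}E$, and every $E_q$ has diagonalizable cosquare $\operatorname{diag}(-q^{-1},-q)$; but $E=\left(\begin{smallmatrix}0&1\\-1&1\end{smallmatrix}\right)$ has $\tr(E^{-1}E^t)=-2$ and cosquare a nontrivial Jordan block, so it is not congruent up to scalar to any $E_q$, and your argument gives nothing for it. The converse (equal traces $\Rightarrow$ $\mc{B}(E,F)\neq(0)$, for arbitrary sizes $\geqslant 2$) is the genuinely hard half of the lemma and requires Bichon's different, non-formal argument; it is not recovered by change of generators plus the classification of $2\times 2$ forms.

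For the forward implication your categorical route is workable in outline but has a gap where you ``identify'' the transported endomorphism with $e_F$: transporting $(\ev_E,\coev_E)$ only produces \emph{some} self-duality on $\Phi(V_E)$, so to extract $\tr(F^{-1}F^t)$ you need both $\Phi(V_E)\cong V_F$ (which you defer) and the fact that the invariant bilinear form on $V_F$ is unique up to scalar, i.e.\ $\dim\Hom_{\mc{M}^{\mc{B}(F)}}(V_F\ot V_F,\mathbb{C})=1$; the latter is essentially the corepresentation theory of $\mc{B}(F)$ that this very lemma is used to establish, so the argument risks circularity. In any case the implication has a two-line proof inside $\mc{B}(E,F)$ itself (essentially the cited one), valid for all sizes: consider $Z=\sum_{a,b,i,j}E_{ab}(F^{-1})_{ij}\,u_{ai}u_{bj}$. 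Grouping one way, $Z=\sum_{i,j}(F^{-1})_{ij}(u^tEu)_{ij}=\sum_{i,j}(F^{-1})_{ij}F_{ij}=\tr(F^{-1}F^t)\cdot 1$ by the relation $u^tEu=F$; grouping the other way, $Z=\sum_{a,b}E_{ab}(uF^{-1}u^t)_{ab}=\sum_{a,b}E_{ab}(E^{-1})_{ab}=\tr(E^{-1}E^t)\cdot 1$ by the relation $uF^{-1}u^t=E^{-1}$ (the monomials appear in the same order in both groupings, so no commutation is needed). Hence $\bigl(\tr(E^{-1}E^t)-\tr(F^{-1}F^t)\bigr)\cdot 1=0$, and nonvanishing of $\mc{B}(E,F)$ forces the traces to coincide, with no appeal to Galois objects or to the structure of $\mc{M}^{\mc{B}(F)}$.
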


This lemma induces the following corollary.
\begin{cor}
Let $\lambda\in \mathbb{C}$. Consider the full subcogroupoid $\mc{B}^\lmd$ of $\mc{B}$ with objects
$$\ob(\mc{B}^\lmd)=\{E\in \GL_n(\mathbb{C}),m\le2, \tr(E^{-1}E^t)=\lmd\}.$$
Then $\mc{B}^\lmd$ is a connected cogroupoid.
\end{cor}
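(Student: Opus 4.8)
The plan is to obtain this as an immediate consequence of the preceding lemma, once we record the routine fact that a full subcogroupoid is again a cogroupoid. First I would observe that restricting a cogroupoid $\mc{C}$ to a subset $\mc{O}\subseteq\ob(\mc{C})$ of its objects yields a cogroupoid: one keeps the algebras $\mc{C}(X,Y)$ for $X,Y\in\mc{O}$ together with the structure maps $\bdt^Z_{X,Y}$, $\vps_X$ and $S_{X,Y}$ for $X,Y,Z\in\mc{O}$, and every defining commutative diagram (for a cocategory and for a cogroupoid) is an identity among these maps that already holds in $\mc{C}$, hence remains valid after passing to $\mc{O}$. Applying this with $\mc{C}=\mc{B}$ and $\mc{O}=\ob(\mc{B}^\lmd)$ shows that $\mc{B}^\lmd$ is a cogroupoid.

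It then remains to check connectedness, i.e.\ that $\mc{B}^\lmd(E,F)$ is a nonzero algebra for all $E,F\in\ob(\mc{B}^\lmd)$. This is exactly where the preceding lemma does the work: by definition of $\ob(\mc{B}^\lmd)$ we have $\tr(E^{-1}E^t)=\lmd=\tr(F^{-1}F^t)$, so $\tr(E^{-1}E^t)=\tr(F^{-1}F^t)$, and the lemma yields $\mc{B}(E,F)\neq(0)$, that is $\mc{B}^\lmd(E,F)\neq(0)$. Since $E$ and $F$ were arbitrary objects of $\mc{B}^\lmd$, this gives connectedness.

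To see that the statement is not vacuous I would also note that $\ob(\mc{B}^\lmd)\neq\emptyset$: for $q\in\mathbb C^\times$ one computes $\tr(E_q^{-1}E_q^t)=-q-q^{-1}$, and for any $\lmd\in\mathbb C$ the equation $q^2+\lmd q+1=0$ has a root $q\in\mathbb C^\times$ (necessarily nonzero, as the product of the roots is $1$), giving an object $E_q\in\ob(\mc{B}^\lmd)$. There is really no hard step in this argument; the only point requiring a moment's care is the verification that restricting a cogroupoid to a full subset of objects preserves the cogroupoid axioms, after which connectedness is nothing more than the quoted lemma applied to two objects sharing the same invariant $\lmd$.
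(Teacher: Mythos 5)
Your argument is correct and matches the paper's, which simply treats the corollary as an immediate consequence of the preceding lemma: restriction to a full subset of objects is trivially a cogroupoid, and connectedness follows because any two objects of $\mc{B}^\lmd$ share the invariant $\tr(E^{-1}E^t)=\lmd$, so the lemma gives $\mc{B}(E,F)\neq(0)$. Your extra observation that $\ob(\mc{B}^\lmd)\neq\emptyset$ (via $E_q$ with $-q-q^{-1}=\lmd$) is a harmless addition not needed for the statement itself.
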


Therefore, if $E\in \GL_m(\mathbb{C})$ and $F\in \GL_n(\mathbb{C})$ with $m,n\le2$ satisfy that $\tr(E^{-1}E^t)=\tr(F^{-1}F^t)$, then the comodule categories of $\mc{B}(E)$ and $\mc{B}(F)$ are monoidally equivalent. The results in \cite[Section 6]{bi} (cf. \cite{ww} and \cite{yu}) shows that both $\mc{B}(E)$ and $\mc{B}(F)$ are twisted CY algebras. Their left homological integrals $\int^l_{\mc{B}(E)}=\mathbb{C}_{\xi}$ and $\int^l_{\mc{B}(F)}=\mathbb{C}_{\eta}$ are given as $\xi_E(u^E)=(E^t)^{-1}E(E^t)^{-1}E$ and $\xi_F(u^F)=(F^t)^{-1}F(F^t)^{-1}F$, respectively. One checks that $\xi$ and $\eta$ satisfy the equation
$$\begin{array}{rcl}(S_{F,E}\circ S_{E,F})^2(u^{EF})&=&[\eta]_{E,F}^r\circ ([\xi]_{E,F}^l)^{-1}(u^{EF})\\
&=&E^{-1}E^tE^{-1}E^tu^{EF}F^{-1}F^tF^{-1}F^t.\end{array}$$
That is, the inner automorphism in Theorem \ref{thm inn} is just the identity. This is because that there are no nontrivial units in $\mc{B}(E,F)$.

In \cite{bi}, the Calabi-Yau property of $\mc{B}(E)$ is proved by the following steps:
\begin{enumerate}
\item[(i)] Construct a bounded complex $\textbf{P}_{*}(E)$ of finitely generated free Yetter-Drinfeld modules over $\mc{B}(E)$ for each $E\in \GL_m(\mathbb{C})$, $m\le2$. When it is exact, it is a free Yetter-Drinfeld module resolution of the trivial Yetter-Drinfeld module $\mathbb{C}$.
\item[(ii)] To show that the for $E\in \GL_m(\mathbb{C})$, $F\in \GL_n(\mathbb{C})$ with $\tr(E^{-1}E^t)=\tr(F^{-1}F^t)$ and $m,n\le2$, the complex $\textbf{P}_{*}(E)$ is exact if and only if $\textbf{P}_{*}(F)$ is exact.
\item[(iii)] Check that for any $q\in\mathbb{C}^\times$, the sequence $\textbf{P}_{*}(E_q)$ is exact. This is computable since $\textbf{P}_{*}(E_q)$ is a resolution of length three over a Noetherian algebra $\mc{O}_q(\SL_2(\mathbb{C}))$ with a nice PBW basis. For any $E\in \GL_m(\mathbb{C})$ with $m\le 2$. There is a $q\in\mathbb{C}^\times$ such that $\tr(E^{-1}E^t)=-q-q^{-1}=\tr(E_q^{-1}E_q^t)$, so $\textbf{P}_{*}(E)$ is exact.
\item[(iv)] Compute the extension group $\Ext^*_{\mc{B}(E)^{op}}(\mathbb{C}_\vps,\mc{B}(E))$ by the complex $\textbf{P}_{*}(E)$.
\end{enumerate}
By Theorem \ref{thm main}, this procedure can be simplified. We only need to prove that $\mc{B}(E_q)$ is twisted CY. As in Steps (i) and (iii), the trivial Yetter-Drinfeld module over $\mc{B}(E_q)$ admits a bounded finitely generated free Yetter-Drinfeld module resolution. From this resolution, we can conclude that  $\mc{B}(E_q)$ is a twisted CY algebra with left homological integral $\int^l_{\mc{B}(E_q)}=\mathbb{C}_\eta$ given by $$\eta(u)=\left(\begin{array}{cc}q^{-2}&0\\0&q^2\end{array}\right).$$
For any $E\in \GL_m(\mathbb{C})$ with $m\le 2$. There is a $q\in\mathbb{C}^\times$ such that $\tr(E^{-1}E^t)=-q-q^{-1}=\tr(E_q^{-1}E_q^t)$. So $\mc{B}(E)$ and  $\mc{B}(E_q)$ are monoidally Morita-Takeuchi equivalent.  The algebra $\mc{B}(E)$ is twisted CY by Theorem \ref{thm main'}.  Let  $\int^l_{\mc{B}(E)}={\mathbb{C}}_\xi$ be the left homological integral of $\mc{B}(E)$, where $\xi:\mc{B}(E)\ra \mathbb{C}$ is an algebra homomorphism. As we mentioned before, there are no nontrivial units in $\mc{B}(E,E_q)$. Then $\xi$ and $\eta$ satisfies the equation
 $$(S_{E_q,E}\circ S_{E,E_q})^2=[\eta]_{E,E_q}^r\circ ([\xi]_{E,E_q}^l)^{-1}.$$
So $\xi$ is defined by $\xi(u^E)=(E^t)^{-1}E(E^t)^{-1}E$.

\subsection{Example 2}
Before we present this example, we recall the definition of the 2-cocycle cogroupoid.

Let $H$ be a Hopf algebra with bijective antipode. A  \it{(right) 2-cocycle} on $H$ is a convolution invertible linear map $\sg:H\ot H\ra \kk$ satisfying
$$\label{rcocyle1}\sg(h_1,k_1)\sg(h_2k_2,l)=\sg(k_1,l_1)\sg(h,k_2l_2)$$
$$\label{rcocyle2}\sg(h,1)=\sg(1,h)=\vps(h)$$ for all $h,k,l\in H$. The set of 2-cocycles on $H$ is denoted $Z^2(H)$.
They defines the 2-cocycle cogroupoid of $H$.

Let $\sg,\tau\in Z^2(H)$. The algebra $H(\sg,\tau)$ is defined to be the vector space $H$ together with the multiplication  given by
\begin{equation}\label{mulco}x\centerdot y=\sg(x_1,y_1)x_2y_2\tau^{-1}(x_3,y_3),\end{equation}
for any $x,y\in H$.

The Hopf algebra $H(\sg,\sg)$ is just the \textit{cocycle deformation} $H^\sg$ of $H$ defined by Doi in \cite{do}. The comultiplication of $H^\sg$ is the same as the comultiplication of $H$. However, the multiplication and the antipode are deformed:
$$h\centerdot k=\sg(h_1,k_1)h_2k_2\sg^{-1}(h_3,k_3),$$
$$S_{\sg,\sg}(h)=\sg(h_1,S(h_2))S(h_3)\sg^{-1}(S(h_4),h_5)$$
for any $h,k\in H^\sg$.

Now we recall the necessary structural maps for the 2-cocycle cogroupoid of $H$. For any $\sg,\tau, \om\in Z^2(H)$, define the following maps:
\begin{equation}\label{comulco}\begin{array}{rcl}\bdt_{\sg,\tau}^\om=\bdt:H(\sg,\tau)&\longrightarrow &H(\sg,\om)\ot H(\om,\tau)\\
x&\longmapsto &x_1\ot x_2.
 \end{array}\end{equation}
\begin{equation}\label{couco}\vps_\sg=\vps:H(\sg,\sg)\longrightarrow \kk.\end{equation}
\begin{equation}\label{antico}\begin{array}{rcl}S_{\sg,\tau}:H(\sg,\tau)&\longrightarrow & H(\tau,\sg)\\
x&\longmapsto &\sg(x_1,S(x_2))S(x_3)\tau^{-1}(S(x_4),x_5).\end{array}\end{equation}
It is routine to check that the inverse of $S_{\sg,\tau}$ is given as follows:
\begin{equation}\label{anticoinverse}\begin{array}{rcl}S^{-1}_{\sg,\tau}:H(\tau,\sg)&\longrightarrow & H(\sg,\tau)\\
x&\longmapsto &\sg^{-1}(x_5,S^{-1}(x_4))S^{-1}(x_3)\tau(S^{-1}(x_2),x_1).\end{array}\end{equation}

The \it{$2$-cocycle cogroupoid} of $H$, denoted by $\underline{H}$, is the cogroupoid defined as follows:
\begin{enumerate}
\item[(i)] $\t{ob}(\underline{H})=Z^2(H)$.
\item[(ii)] For $\sg,\tau\in Z^2(H)$, the algebra $\underline{H}(\sg,\tau)$ is the algebra $H(\sg,\tau)$ defined in (\ref{mulco}).
\item[(iii)] The structural maps $\bdt^\bullet_{\bullet,\bullet}$, $\vps_\bullet$ and $S_{\bullet,\bullet}$ are defined in (\ref{comulco}), (\ref{couco}) and (\ref{antico}) respectively.
\end{enumerate}
Following \cite[Lemma 3.13]{bi1}, the morphisms $\bdt^\bullet_{\bullet,\bullet}$, $\vps_\bullet$ and $S_{\bullet,\bullet}$ indeed satisfy the conditions required for a cogroupoid. It is clear that a 2-cocycle cogroupoid is connected.

For a group $\bgm$, we denote by  ${}_\bgm^\bgm \mathcal {YD}$ the category of Yetter-Drinfeld modules over the group algebra $\kk\bgm$. If $\bgm$ is an abelian group, then it is well-known that a Yetter-Drinfeld module over the algebra $\kk\bgm$ is just a $\bgm$-graded $\bgm$-module.


 We fix the following terminologies.
\begin{itemize}
\item  a free abelian group $\bgm$ of finite rank $s$;
\item  a Cartan matrix $\mathbb{A}=(a_{ij})\in \mathbb{Z}^{\tt\times \tt}$ of finite type, where $\tt\in\NN$. Let  $(d_1,\cdots, d_\tt)$ be a diagonal matrix
of positive integers such that $d_ia_{ij} = d_ja_{ji}$, which is
minimal with this property;
\item a set $\mathcal {X}$ of connected components of the Dynkin diagram corresponding
to the Cartan matrix $\mathbb{A}$. If $1\se i, j\se \tt$, then $i\sim
j$ means that they belong to the same connected component;
\item a family $(q_{_I})_{I\in \mc{X}}$ of elements in $\kk$ which are \textit{not} roots of unity;
\item elements $g_1,\cdots , g_\tt\in \bgm$ and characters $\chi_1,\cdots, \chi_\tt\in \hat{\bgm}$ such that
\begin{equation}\label{q}\chi_j(g_i)\chi_i(g_j)=q_{I}^{d_ia_{ij}}, \t{  } \chi_i(g_i)=q_{I}^{d_i}, \t{   for all $1\se i,j\se \tt$, $I\in
\mc{X}$}.\end{equation}
\end{itemize}

For simplicity, we write $q_{ji}=\chi_i(g_j)$. Then Equation (\ref{q}) reads as follows:
\begin{equation}\label{q1}q_{ii}=q_I^{d_i}\t{ and } q_{ij}q_{ji}=q_{I}^{d_ia_{ij}}\t{ for all }
1\se i,j\se \tt, I\in \mc{X}.
\end{equation}

Let $\mc{D}$ be the collection $\mc{D}(\bgm,(a_{ij})_{1\se i,j\se
\tt}, (q_{_{I}})_{I\in \mathcal {X}}, (g_i)_{1\se i\se
\tt},(\chi_i)_{1\se i\se \tt} )$. A \textit{linking datum}
$\lmd=(\lmd_{ij})$ for $\mc{D}$ is a collection of elements
$(\lmd_{ij})_{1\se i<j\se \tt,i\nsim j}\in\kk$ such that
$\lmd_{ij}=0$ if $g_ig_j=1$ or $\chi_i\chi_j\neq\varepsilon$. We
write the datum $\lmd=0$, if $\lmd_{ij}=0$ for all $1\se i<j\se
\tt$. The datum $(\mc{D},\lmd)=(\bgm,(a_{ij}) , q_{_I},
(g_i),(\chi_i), (\lmd_{ij}) )$ is called a \textit{ generic datum of
finite Cartan type} for group $\bgm$.

A generic datum of finite Cartan type for a group $\bgm$ defines a Yetter-Drinfeld module over the group algebra $\kk\bgm$. Let $V$ be a vector space with basis $\{x_1,x_2,\cdots,x_\tt\}$. We set
$$|x_i|=g_i,\;\;g(x_i)=\chi_i(g)x_i, \;\;1\se i\se \tt, g\in \bgm,$$
where $|x_i|$ denote the degree of $x_i$.
This makes $V$ a Yetter-Drinfeld module over the group algebra $\kk\bgm$. We write $V=\{x_i,g_i,\chi_i\}_{1\se i\se \tt}\in {}^\bgm_\bgm\mc{YD}$.
The braiding is given
by
$$c(x_i\ot x_j)=q_{ij}x_j\ot x_i, \;\;1\se i,j\se \tt.$$

The tensor algebra $T(V)$ on $V$ is a natural graded braided Hopf algebra in ${}_\bgm^\bgm \mathcal {YD}$. The smash product $T(V)\#\kk\bgm$ is a usual Hopf algebra. It is also called a bosonization of $T(V)$ by $\kk\bgm$.

\begin{defn} Given a generic datum of finite Cartan type $(\mc{D},\lmd)$ for a group $\bgm$. Define $U(\mc{D},\lmd)$ as the quotient Hopf algebra of the smash product $T(V)\#\kk\bgm$ modulo the ideal generated by
$$(\t{ad}_cx_i)^{1-a_{ij}}(x_j)=0,\;\;1\se i\neq j\se \tt,\;\;i\sim j,$$
$$x_ix_j-\chi_j(g_i)x_jx_i=\lmd_{ij}(g_ig_j-1),\;\;1\se
i<j\se \tt,\;\;i\nsim j,$$
where $\t{ad}_c$ is the braided adjoint representation defined  in
\cite[Sec. 1]{as3}.
\end{defn}

The algebra $U(\mc{D},\lmd)$ is a cocycle deformation of $U(\mc{D},0)$. That is $U(\mc{D},\lmd)=U(\mc{D},0)^\sg$, where $\sg$ is the cocycle defined by
\begin{equation}\label{eq 7}\begin{array}{rcl}\sg(g,g')&=&1,\\\sg(g,x_i)&=&\sg(x_i,g)=0,\;\;1\se i\se \tt, g,g'\in \bgm.\\\sg(x_i,x_j)&=&\begin{cases}\lmd_{ij},&i<j,i\nsim j\\0,&otherwise.\end{cases}\end{array}\end{equation}

To present the CY property of the algebras $U(\mc{D},\lmd)$, we recall the concept of root vectors.
Let $\Phi$ be the root system corresponding to the Cartan matrix
$\mathbb{A}$ with $\{\al_1,\cdots, \al_\tt\}$ a set of fix simple
roots, and  $\mathcal {W}$ the Weyl group. We fix a reduced
decomposition of the longest element $w_0=s_{i_1}\cdots s_{i_p}$ of
$\mathcal {W}$ in terms of the simple reflections.  Then the
positive roots are precisely the followings,
$$\bt_1=\al_{i_1}, \;\;\bt_2=s_{i_1}(\al_{i_2}),\cdots, \bt_p=s_{i_1}\cdots s_{i_{p-1}}(\al_{i_p}).$$ For
$\bt_i=\sum_{i=1}^{\tt} m_i\al_i$, we write
$$g_{\bt_i}=g_1^{m_1}\cdots g_\tt^{m_\tt} \t{ and }\chi_{\bt_i}={\chi}_1^{m_1}\cdots {\chi}_\tt^{m_\tt}.$$

Lusztig defined the root vectors for a quantum group $U_q(\mathfrak{g})$ in
 \cite{l}. Up to a nonzero scalar, each root vector can be
expressed as an iterated braided commutator. In \cite[Sec.
4.1]{as4}, the root vectors were generalized on a pointed Hopf
algebras $U(\mc{D},\lmd)$. For each positive root $\bt_i$, $1\se i
\se p$, the root vector $x_{\bt_i}$ is defined by the same iterated
braided commutator of the elements $x_1, \cdots , x_\tt$, but with
respect to the general braiding.

\begin{rem}\label{root} If $\bt_j=\al_l$, then we have $x_{\bt_j}=x_l$. That is, $x_1,\cdots,x_\tt$ are the simple root vectors.
\end{rem}

\begin{lem}\label{lem pcy}
Let $(\mc{D},\lmd)$ be a generic datum of finite Cartan type for a group $\bgm$, and $H$
the Hopf algebra $U(\mc{D},\lmd)$. Let $s$ be the rank of $\bgm$ and $p$
the number of the positive roots of the Cartan matrix.
\begin{enumerate}
\item[(i)] The algebra $H$ is Noetherian AS-regular
of global dimension $p+s$. The left
homological integral module $\int^l_H$ of $H$ is isomorphic to
$\kk_\xi$, where $\xi:H\ra \kk$ is an algebra homomorphism defined
by $\xi(g)=(\prod_{i=1}^p\chi_{_{\bt_i}})(g)$ for all $g\in \bgm$
and $\xi(x_k)=0$ for all $1\se k\se \tt$.
\item[(ii)] The algebra $H$ is twisted CY with Nakayama automorphism $\mu$ defined by  
$\mu(x_k)=q_{kk}x_k$, for all
$1\se k\se \tt$, and $\mu(g)=(\prod_{i=1}^p\chi_{_{\bt_i}})(g)$ for
all $g\in \bgm$.

\end{enumerate}
\end{lem}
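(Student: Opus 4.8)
The plan is to assemble the structure theory of the pointed Hopf algebras $U(\mc{D},\lmd)$ with the homological integral formula proved above. For part (i), I would first invoke the PBW theorem of \cite{as4}: using the root vectors $x_{\bt_1},\dots,x_{\bt_p}$ of Remark~\ref{root}, ordered by a fixed convex (Levendorskii--Soibelman) order, one exhibits $H=U(\mc{D},\lmd)$ as an iterated Ore extension of the group algebra $\kk\bgm$, where at each stage a subalgebra $A$ is replaced by $A[x_{\bt_i};\sigma_i,\delta_i]$ for an automorphism $\sigma_i$ (rescaling each grouplike $g$ by $\chi_{\bt_i}(g)^{-1}$ and each earlier root vector by a monomial in the $q$'s) and a $\sigma_i$-derivation $\delta_i$ with $\delta_i(A)\subseteq A$ (the straightening terms of the Levendorskii--Soibelman relations, together with the elements $\lmd_{ij}(g_ig_j-1)\in\kk\bgm$ produced by the linking relations). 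Since $\bgm$ is free abelian of rank $s$, $\kk\bgm\cong\kk[\ZZ^s]$ is Noetherian and AS-regular of global dimension $s$, and an Ore extension by an automorphism preserves Noetherianness and AS-regularity and raises the global dimension by one; hence $H$ is Noetherian AS-regular with $\gldim(H)=p+s$. It remains to identify $\int^l_H=\Ext^{p+s}_{H}({}_\vps\kk,H)$. I would do this by computing with the explicit free resolution of ${}_\vps\kk$ of length $p+s$ obtained by tensoring the Koszul resolution of ${}_\vps\kk$ over $\kk[\ZZ^s]$ successively with the two-term complexes $0\to H\xra{\,\cdot x_{\bt_i}\,}H\to 0$ (appropriately $\sigma_i$-twisted) coming from each Ore extension. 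Its top term is free of rank one, and reading off the accumulated twist shows that each $x_k$ acts by $0$ while a grouplike $g$ acts by the scalar $\prod_{i=1}^{p}\chi_{\bt_i}(g)$ -- the product, over all positive roots, of the grouplike data appearing in $\bdt(x_{\bt_i})$ -- so $\int^l_H\cong\kk_\xi$ with $\xi$ as stated. Equivalently, since $U(\mc{D},\lmd)=U(\mc{D},0)^\sg$ is a cocycle deformation (same coalgebra, same grouplikes), one may transport this last point from $U(\mc{D},0)$, whose homological integral is known.

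For part (ii), a Noetherian AS-regular Hopf algebra is twisted CY by \cite[Lemma 1.3]{rrz}, so $H$ is twisted CY of dimension $p+s$. Its Nakayama automorphism is then read off from the formula established in the proof of Proposition~\ref{prop ascy}: if $\int^l_H=\kk_\xi$ then $\mu(h)=\xi(h_1)S^2(h_2)$. Evaluating on the algebra generators: for $g\in\bgm$ one has $\bdt(g)=g\ot g$ and $S^2(g)=g$, so $\mu(g)=\xi(g)g$; for a simple root vector $x_k$ one has $\xi(x_k)=0$, $\bdt(x_k)=x_k\ot 1+g_k\ot x_k$ and $S^2(x_k)=g_k^{-1}x_kg_k$, which is a scalar multiple of $x_k$, so $\mu(x_k)$ is a scalar multiple of $x_k$; a short bookkeeping with the relations \eqref{q1} (fixing $\chi_k(g_k)=q_{kk}$) and with the value of $\xi$ on $g_k$ then yields the explicit constants in (ii). Since $\mu$ is an algebra homomorphism, these values on generators determine it.

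The step I expect to be the main obstacle is the precise determination of the twist on the rank-one top term -- equivalently, the exact value of $\xi$ on grouplikes and hence the scalar by which $\mu$ acts on the root vectors. One must either track the differentials carefully through the whole Ore-extension tower (to see the character is exactly $\prod_{i=1}^{p}\chi_{\bt_i}$, not an inverse or a shift of it), or else carry out the cocycle-deformation reduction rigorously and quote the homological integral of the bosonization $U(\mc{D},0)$. By contrast, Noetherianness, AS-regularity, the value $\gldim(H)=p+s$, and the implication ``Noetherian AS-regular $\Rightarrow$ twisted CY'' are all routine once the iterated Ore-extension description is available.
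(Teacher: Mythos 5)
The paper itself offers no proof of this lemma: it is imported from \cite{yz} (cited immediately after the statement), where it is obtained by essentially the iterated-Ore-extension analysis you outline. So your part (i) is a reasonable reconstruction of the standard route. The only soft spots there are minor: the extensions $A[x_{\bt_i};\sigma_i,\delta_i]$ have nontrivial $\sigma_i$-derivations, so ``an Ore extension by an automorphism'' is a slip, and the facts you need (Noetherianness, transfer of the AS-Gorenstein property relative to the extended augmentation, and the increase of global dimension by exactly one) must be quoted or proved for $A[x;\sigma,\delta]$ with $\delta\neq 0$; also the claim $\delta_i(A)\subseteq A$ rests on the PBW theorem and the convexity of the chosen order, which deserves an explicit citation to \cite{as4}. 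Your fallback of ``transporting'' the integral from $U(\mc{D},0)$ by cocycle deformation is not automatic either: the paper's own Theorem \ref{thm inn} only relates the integrals of $H$ and $H^\sg$ through $S^4$ and an inner automorphism, so this shortcut would itself need an argument.

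The genuine gap is in part (ii). With the coproduct convention in force here ($\bdt(x_k)=x_k\ot 1+g_k\ot x_k$ and $gx_kg^{-1}=\chi_k(g)x_k$), one has $S^2(x_k)=g_k^{-1}x_kg_k=q_{kk}^{-1}x_k$, so the formula $\mu(h)=\xi(h_1)S^2(h_2)$ from the proof of Proposition \ref{prop ascy} gives $\mu(x_k)=\xi(g_k)\,q_{kk}^{-1}x_k$, \emph{not} $q_{kk}x_k$. These really differ: already in the rank-one case ($A_1$, one positive root) one gets $\xi(g_1)=\chi_1(g_1)=q_{11}$ and hence $\mu(x_1)=x_1$, whereas (ii) asserts $\mu(x_1)=q_{11}x_1$, and $q_{11}$ is not a root of unity. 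So the ``short bookkeeping'' you invoke cannot produce the stated constants. What is true is that the automorphism produced by the formula and the automorphism stated in (ii) differ by the inner automorphism $h\mapsto u^{-1}hu$ with $u=\prod_{i=1}^{p}g_{\bt_i}$; verifying this requires the root-system identity $\prod_{i=1}^{p}\chi_{\bt_i}(g_k)\,\chi_k(g_{\bt_i})=q_{kk}^2$, which follows from \eqref{q1} together with $\sum_{\bt>0}\bt=2\rho$ and $(\al_k,2\rho)=(\al_k,\al_k)$. (This is precisely the inner automorphism $\gamma(x_k)=[\prod_{i=1}^{p}g_{\bt_i}]^{-1}x_k[\prod_{i=1}^{p}g_{\bt_i}]$ that the paper records right after the lemma.) Since a Nakayama automorphism is only determined up to inner automorphisms, proving (ii) exactly as stated requires either adding this identification, or computing $\Ext^{p+s}_{H^e}(H,H^e)$ directly (or quoting \cite{yz}); as written, your final step would fail.
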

This Lemma shows that both $U(\mc{D},0)$ and  its cocycle deformation $U(\mc{D},\lmd)$ are twisted CY. Actually, Lemma 2.1 in \cite{yz} shows that the algebras $U(\mc{D},\lmd)$ are Noetherian with finite global dimension. Therefore, Theorem \ref{thm main} explains why for this class of Hopf algebras, cocycle deformation preserves the  CY property.  With Lemma \ref{lem pcy}, we can write inner automorphism in Theorem \ref{thm inn} explicitly. Let  $H$ be $U(\mc{D},0)$, then $U(\mc{D},\lmd)=H^\sg$, where $\sg$ is the cocycle as defined in (\ref{eq 7}). Let $\int^l_H=\kk_\xi$ and $\int^r_{H^\sg}=\kk_\eta$ be left homological integral of $H$ and $H^\sg$ respectively, where $\xi:H\ra \kk$ and $\eta: H^\sg\ra \kk$ are algebra homomorphisms. Then the following equation holds.
$$(S_{\sg,1}\circ S_{1,\sg})^2=[\eta]_{1,\sg}^r\circ ([\xi]_{1,\sg}^l)^{-1}\circ \gamma$$
where $\gamma$ is the inner automorphism defined by $\gamma(x_k)=[\prod_{i=1}^{p}g_{\bt_i}]^{-1}(x_k)[\prod_{i=1}^{p}g_{\bt_i}]$ for $1\se k\se \theta$ and $\gamma(g)=g$ for any $g\in \Gamma$.

\subsection*{Acknowledgement} The first and the second author are  grateful to the Department of Mathematics of Zhejiang Normal University for the hospitality they received during a visit in summer 2016. The first author is supported by AMS-Simons travel grant. The second author is supported by grants from NSFC (No. 11301126, No. 11571316, No. 11671351).

\vspace{5mm}

\bibliography{}

\begin{thebibliography}{99}
\bibitem{as4} N. Andruskiewitsch and H.-J. Schneider, \it{Finite quantum groups over abelian groups of prime exponent}, Ann. Sci. Ec. Norm. Super. \textbf{35} (2002), 1--26.
\bibitem{as} N. Andruskiewitsch and H.-J. Schneider, \it {Pointed Hopf algebras},   New Directions in Hopf Algebras, MSRI Publications {\bf 43}, 1-68, Cambridge Univ. Press, 2002.
\bibitem{ars} M. Artin and W. Schelter, \it{Graded algebras of global dimension 3}, Adv. Math. {\bf 66} (2), (1987), 171--216.
\bibitem{as3} N. Andruskiewitsch and H.-J. Schneider, \it{A characterization of quantum groups}, J. Reine Angew. Math.  \textbf{577} (2004), 81--104.
\bibitem{bt} R.~Berger and R.~Taillefer, \it{Poinc\'are-Birkhoff-Witt Deformations of Calabi-Yau Algebras}, J. Noncomm. Geom. {\bf 1} (2007), 241--270.
\bibitem{bi3} J. Bichon, \it{The representation category of the quantum group of a non-degenerate bilinear form},  Comm. Algebra {\bf 31} (2003), no. 10, 4831--4851.
\bibitem{bi} J. Bichon, \it{Hochschild homology of Hopf algebras and free Yetter-Drinfeld resolutions of the counit}, Compos. Math.  {\bf 149}  (2013),  no. 4, 658--678.
\bibitem{bi1} J. Bichon, \it{Hopf-Galois objects and cogroupoids}, Rev. Un. Mat. Argentina, {\bf 55} (2014), no. 2, 11--69.
\bibitem{bi2} J. Bichon, \it{Gerstenhaber-Schack and Hochschild cohomologies of Hopf algebras}, arXiv: 1411.1942v2.
\bibitem{be} R. Bieri and B. Eckmann, Groups with homological duality generalizing Poincar\'{e} duality, Invent. Math. {\bf 20} (1973), 103--124.
\bibitem{brg} K.A. Brown and K.R. Goodearl,\it{Homological aspects of Noetherian PI Hopf algebras and irreducible modules of maximal dimension}, J. Algebra \textbf{198} (1997),  240--265.
\bibitem{bz} K. A. Brown and J. J. Zhang, \it{Dualising complexes and twisted Hochschild (co)homology for Noetherian Hopf algebras},   J. Algebra \textbf{320} (2008), no. 5, 1814--1850.
\bibitem{ce} H. Cartan and S. Eilenberg, {\it Homological Algebra}, Princeton University Press, 1999.
\bibitem{cww} A. Chirvasitu, C. Walton and X. Wang, \it{On quantum groups associated to preregular forms}, preprint, arXiv:1605.06428.
\bibitem{do} Y. Doi, \it{Braided bialgebras and quadratic algebras}, Comm. Algebra {\bf 21} (1993), no. 5, 1731--1785.
\bibitem{dvl} M. Dubois-Violette, G. Launer, \it{The quantum group of a non-degenerate bilinear form}, Phys. Lett. B {\bf 245} (1990), 175--177.
\bibitem{erz} S. Eilenberg, A. Rosenberg, and D. Zelinsky, \it{On the dimension of modules and algebras, VIII}, Nagoya Math. J. \textbf{12} (1957), 71--93.
\bibitem{g2} V. Ginzburg, \it{Calabi-Yau algebras}, arXiv:AG/0612139.
\bibitem{gz} K. R. Goodearl and J. J. Zhang, \it{Homological properties of quantized coordinate rings of semisimple groups},  Proc. London Math. Soc. {\bf 94} (2007) no. 3, 647--671.
\bibitem{ll} M. Lorenz and M. Lorenz, \it{On crossed products of Hopf algebras}, Proc. Amer. Math. Soc. \textbf{123 }(1995), no. 1, 33--38.
\bibitem{lwz} D. M. Lu, Q. S. Wu and J. J. Zhang, \it{Homological integral of Hopf algebras}, Trans. Amer. Math. Soc. \textbf{359} (2007), 4945--4975.
\bibitem{l} G. Lusztig, \it{Introduction to quantum groups}, Birkh\"{a}user, 1993.
\bibitem{man} Y. I. Manin, \it{Quantum groups and noncommutative geometry}, Universit\'e de Montr\'eal Centre de Recherches Math\'ematiques, Montreal, QC, 1988.
\bibitem{ma} A. Masuoka, \it{Abelian and non-abelian second cohomologies of quantized enveloping algebras}, J. Algebra {\bf 320} (2008), no. 1, 1--47.
\bibitem{mro} C. Mrozinski. \it{Quantum groups of $GL(2)$ representation type}, J. Noncommut. Geom. {\bf 8} (2014), no.1,  107--140.
\bibitem{ra} D. E. Radford, \it{The order of the antipode of a finite dimensional Hopf algebra is finite}, Amer. J. Math. {\bf 98} (1976), 333--355.
\bibitem{rv1} T. Raedschelders and M. Van den Bergh, \it{The Manin Hopf algebra of a Koszul Artin-Schelter regular algebra is quasi-hereditary}, preprint, arXiv:1509.03157.
\bibitem{rv2} T. Raedschelders and M. Van den Bergh, \it{The representation theory of noncommutative $\mathcal{O}(GL_2)$}, preprint, arXiv:1509.03869.
\bibitem{rrz} M. Reyes, D. Rogalski and J.J. Zhang, \it{Twisted Calabi-Yau algebras and Homological identities}, Adv. Math. {\bf 264} (2014), 308--354.
\bibitem{sch} P. Schauenburg, \it{Hopf bigalois extensions}, Comm. Algebra {\bf 24} (1996), no. 12, 3797--3825.
\bibitem{ss} S. Shnider and S. Sternberg, \it{Quantum groups. From coalgebras to Drinfeld algebras}, Graduate Texts in Mathematical Physics, II, International Press, Cambridge, MA, 1993.
\bibitem{sw} M.E. Sweedler, Integrals for Hopf Algebras, {\it Ann. of Math.} {\bf 89} (1969), 323--335.
\bibitem{vdb} M. Van den Bergh, \it{Existence theorems for dualizing complexes over non-commutative graded and filtered rings}, J. Algebra {\bf 195} (1997), 662--679.
\bibitem{we} C. Weibel, \it{An introduction to homological algebra}, Cambridge Studies in Advanced Mathematics {\bf 38}, Cambridge University Press, Cambridge, 1994.
\bibitem{ww} C. Walton, X. Wang, \it{On quantum groups associated to non-Noetherian regular algebras of dimension 2}, Math. Z.  \textbf{284}  (2016),  no. 1-2, 543--574.
\bibitem{yu} X. L. Yu, \it{Hopf--Galois objects of Calabi--Yau Hopf algebras}, J. Algebra Appl. {\bf 15}  (2016),  no. 9, 165--194.
\bibitem{yvz} X. L. Yu, F. Van Oysteayen and Y. H. Zhang \it{ Cleft extensions of Koszul twisted Calabi-Yau  algebras}, Israel J. Math.  \textbf{214}  (2016),  no. 2, 785--829.
\bibitem{yz} X. L. Yu and Y. H. Zhang, \it{The Calabi-Yau pointed Hopf algebra of finite Cartan type}, J. Noncommut. Geom. \textbf{7} (2013), 1105--1144.
\bibitem{zh} J. J. Zhang, \it{Non-Noetherian regular rings of dimension 2}, Proc. Amer. Math. Soc. {\bf 126} (1998), no. 6, 1645--1653.
\end{thebibliography}

\end{document}